\newtheorem{theorem}{Theorem}[section]
\newtheorem{lemma}[theorem]{Lemma}
\newtheorem{proposition}[theorem]{Proposition}
\theoremstyle{definition}
\newtheorem{definition}[theorem]{Definition}
\newtheorem*{remark}{Remark}
\newcommand{\nin}{\noindent}
\newcommand{\vs}{\vspace{0.15in}}
\newcommand{\V}{{\bf V}}
\newcommand{\I}{{\bf I}}
\def \beq{ \begin{equation} }
\def \eeq{\end{equation}}
\title{Relative Equilibria in the Four-Vortex Problem \\ with Two Pairs of Equal Vorticities}
\author{Marshall Hampton\thanks{
Dept. of Mathematics and Statistics, University of Minnesota Duluth, mhampton@d.umn.edu},
Gareth E. Roberts\thanks{
Dept. of Mathematics and Comp. Sci., College of the Holy Cross, groberts@radius.holycross.edu}, 
Manuele Santoprete\thanks{
Dept. of Mathematics, Wilfrid Laurier Univerisity, msantoprete@wlu.ca}
}
\begin{document}

\maketitle

\begin{abstract}
We examine in detail the relative equilibria in the four-vortex problem where 
two pairs of vortices have equal strength,
that is, $\Gamma_1 = \Gamma_2 = 1$ and $\Gamma_3 = \Gamma_4 = m$ where $m \in \mathbb{R} - \{0\}$ is a
parameter.  One main result is that for $m > 0$, the convex configurations all contain a line of symmetry, 
forming a rhombus or an isosceles trapezoid.   The rhombus solutions exist for all $m$ but the isosceles trapezoid case exists only 
when $m$ is positive.  In fact, there exist asymmetric convex configurations when $m < 0$.  
In contrast to the Newtonian four-body problem
with two equal pairs of masses, where the symmetry of all convex central configurations is unproven, 
the equations in the vortex case are easier to handle, allowing for a complete classification of all solutions.
Precise counts on the number and type of solutions (equivalence classes) for different values of $m$, as
well as a description of some of the bifurcations that occur, are provided.
Our techniques involve a combination of analysis and modern and computational algebraic geometry. 
\end{abstract}


\tableofcontents

\section{Introduction}

The motion of point vortices in the plane is an old problem in fluid mechanics 
that was first given a Hamiltonian formulation by Kirchhoff \cite{kirchhoff_vorlesungen_1883}. 
This widely used model provides finite-dimensional approximations
to vorticity evolution in fluid dynamics.  The goal is to track the motion
of the point vortices rather than focus on their internal structure and deformation, 
a concept analogous to the use of ``point masses'' in celestial mechanics.
As with the Newtonian $n$-body problem, an important class of homographic solutions exist
where the configuration is self-similar throughout the motion.  
Such solutions are described as {\em stationary} by O'Neil~\cite{oneil_stationary_1987} and are not limited
to just relative equilibria (rigid rotations), but also include
equilibria, rigidly translating solutions and collapse configurations.

In this paper we focus primarily on the relative equilibria of the four-vortex problem when
two pairs of vortices have the same vortex strength.  Specifically,
if $\Gamma_i \in \mathbb{R} - \{ 0 \}$ denotes the vortex strength of the 
$i$-th vortex, we set $\Gamma_1 = \Gamma_2 = 1$ and $\Gamma_3 = \Gamma_4 = m$,
treating $m$ as a real parameter.  Our main goal is to classify and describe all solutions
as $m$ varies.  Four-vortex configurations which are not collinear (nor contain any three vortices which 
are collinear) can be classified as either {\itshape concave} or {\itshape convex}.   
A concave configuration has one vortex which is located strictly inside the convex hull of the other three, whereas
a convex configuration does not have a vortex contained in the convex hull of the other three vortices.  

The symmetry and asymmetry of solutions plays a major role in our analysis.
In fact, part of the motivation behind our work was to determine whether symmetry could be
proven for this choice of vorticities when $m > 0$ and the configuration is assumed to be 
convex.  This question, while  solved in the Newtonian four-body problem when the equal masses are assumed to be opposite each
other in a convex central configuration~\cite{albouy_symmetry_2008, perez-chavela_convex_2007},  is still open for the case when equal masses are 
assumed to be adjacent.  In contrast, in this paper, we are able to prove that symmetry is required in the case of four vortices with two equal pairs of vorticities.  
In particular, we show that any convex relative equilibrium with $m > 0$, and any concave
solution with $m < 0$, must have a line of symmetry.  For the convex case, the symmetric solutions are
a rhombus and an isosceles trapezoid.  In the concave case, the symmetric solution is an
isosceles triangle with an interior vortex on the axis of symmetry.

\begin{table}
\begin{centering}

\begin{tabular}{|c|c|c|}
\hline 
 Shape  &  $m \in (-1,1]$  & Type of solution (number of)\\
\hline
        &       &                                          \\
Convex   &  $m=1$ &  Square (6) \\[0.05in]
                 & $0 < m < 1$  & Rhombus (2), Isosceles Trapezoid (4) \\[0.05in]
                  & $-1 < m < 0$ & Rhombus (4)\\
         &       &  Asymmetric (8) \\[0.05in]                                                              
                 & $-1/2 < m < 0$ &  $\mbox{Kite}_{34}$ (4) \\[0.05in]
                 & $m^\ast < m < -1/2$ & $\mbox{Kite}_{12}$ (4) \\
         &       &                                          \\
Concave  & $m=1$  &  Equilateral Triangle with Interior Vortex (8) \\[0.05in]
         & $0 < m < 1$ &     $\mbox{Kite}_{34}$ (8)\\
         &                       &    Asymmetric (8)             \\ [0.05in]
         &$-1/2 < m<0$  &     $\mbox{Kite}_{12}$ (4)\\
         &       &                                           \\
Collinear  & $m=1$              &  Symmetric  (12)    \\ [0.05in]
                  &  $0 < m < 1$     & Symmetric  (4)         \\
                  &                            & Asymmetric  (8)        \\[0.05in]
                  & $-1 < m < 0$    & Symmetric  (2)           \\[0.05in]
                  & $-1/2 < m < 0$ & Asymmetric (4)         \\       
\hline 
\end{tabular} 

\end{centering}

\label{table:count}
\caption{The number of relative equilibria equivalence classes for the four-vortex problem
with vortex strengths $\Gamma_1 = \Gamma_2 = 1$ and $\Gamma_3 = \Gamma_4 = m$, in terms
of $m$ and the type of configuration. The special value $m^\ast \approx -0.5951$ is the only real root
of the cubic $9m^3 + 3m^2 + 7m + 5$.  $\mbox{Kite}_{ij}$ refers to a kite configuration with vortices
$i$ and~$j$ on the axis of symmetry.}
\end{table}

A precise count on the number and type of solutions as a function of the parameter $m$ is given in
Table~\ref{table:count}.  A configuration is called a {\em kite} if two vortices are on an axis of symmetry
and the other two vortices are symmetrically located with respect to this axis.  Kite configurations may
either be concave or convex.  When counting solutions, we use the standard convention from celestial mechanics
that solutions which are identical under scaling or rotation are considered equivalent.  Note that two
solutions identical under a reflection are counted separately.   The full set of solutions for $m = 2/5$, $m=-1/5$ and $m=-7/10$ (excluding 
any strictly planar configurations identical under a reflection) are shown in Figure~\ref{fig:all-sols}.

 \begin{figure}[h!]
 \centering
 \includegraphics[width=14cm,keepaspectratio=true]{./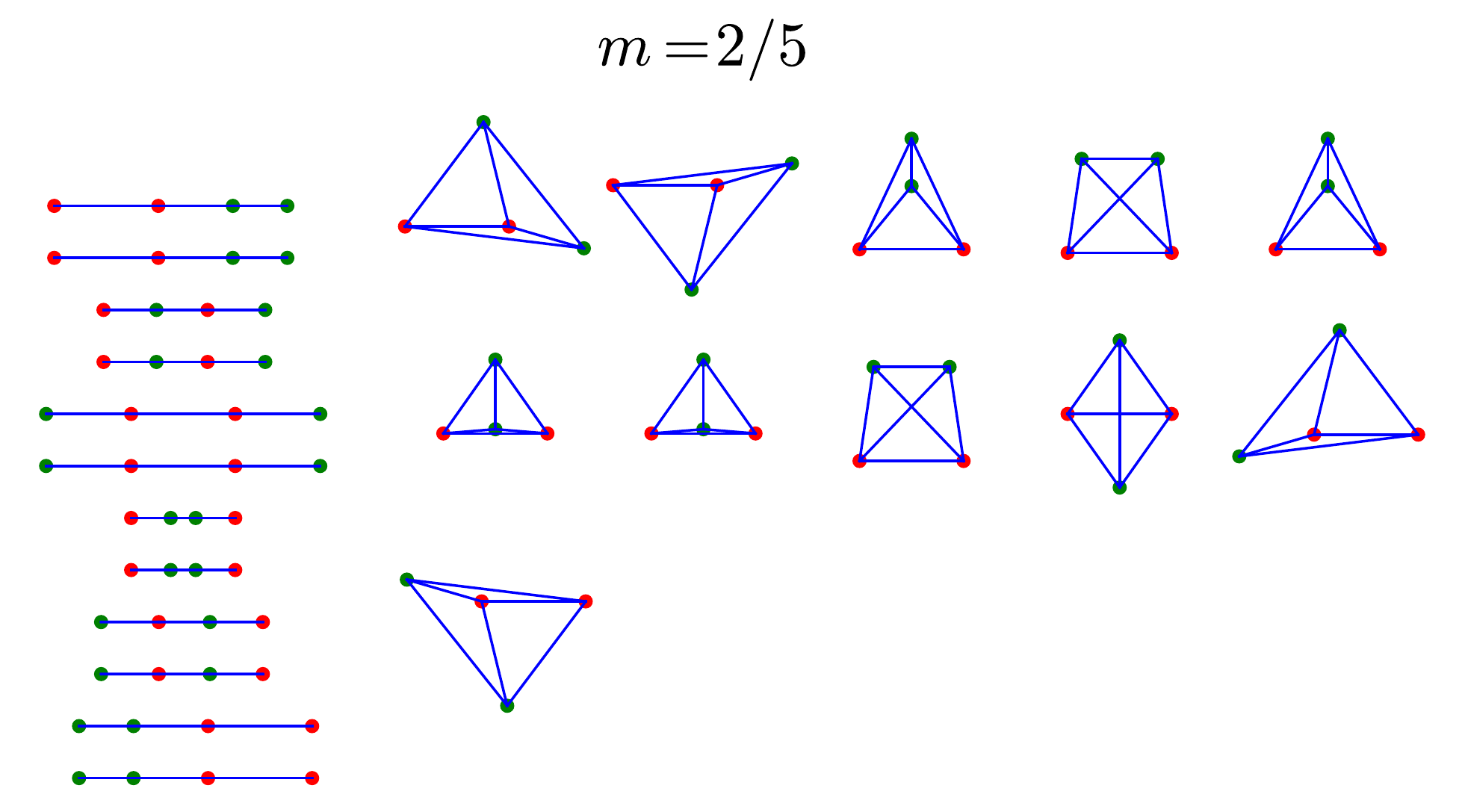}
 \includegraphics[width=12cm,keepaspectratio=true]{./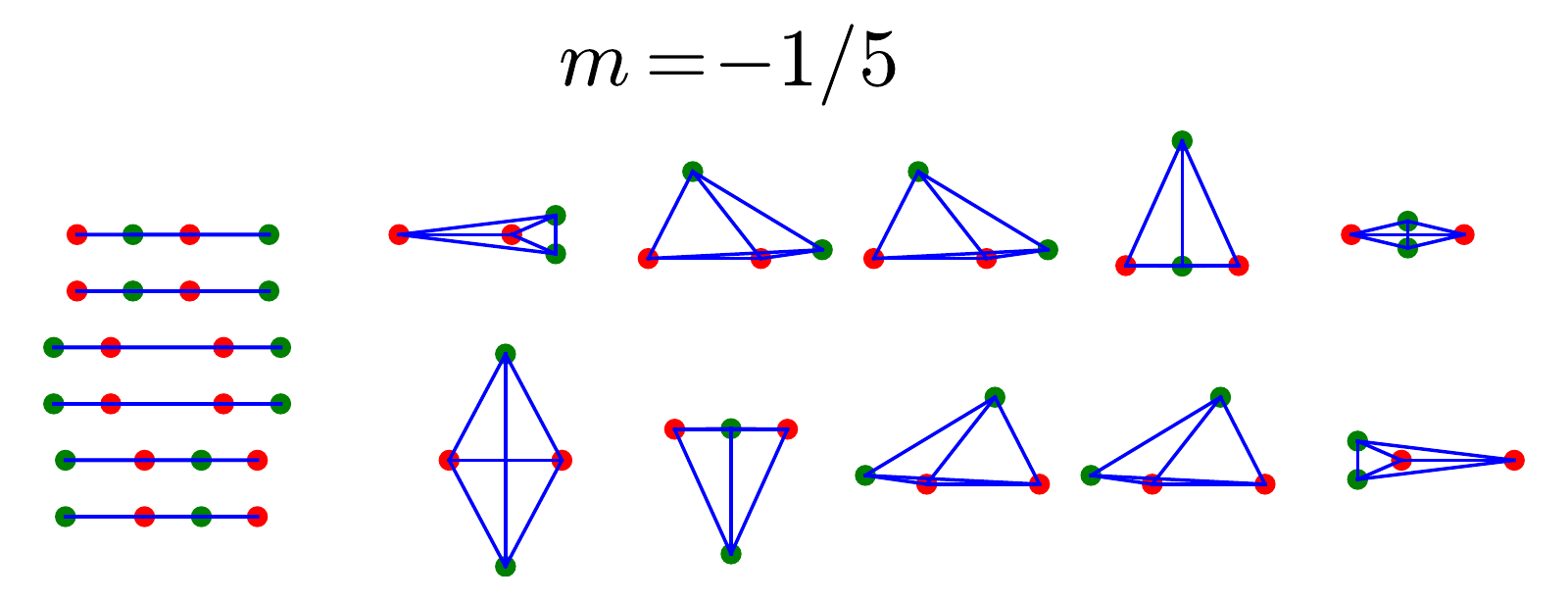}
 \vspace{0.2in}
 \includegraphics[width=12cm,keepaspectratio=true]{./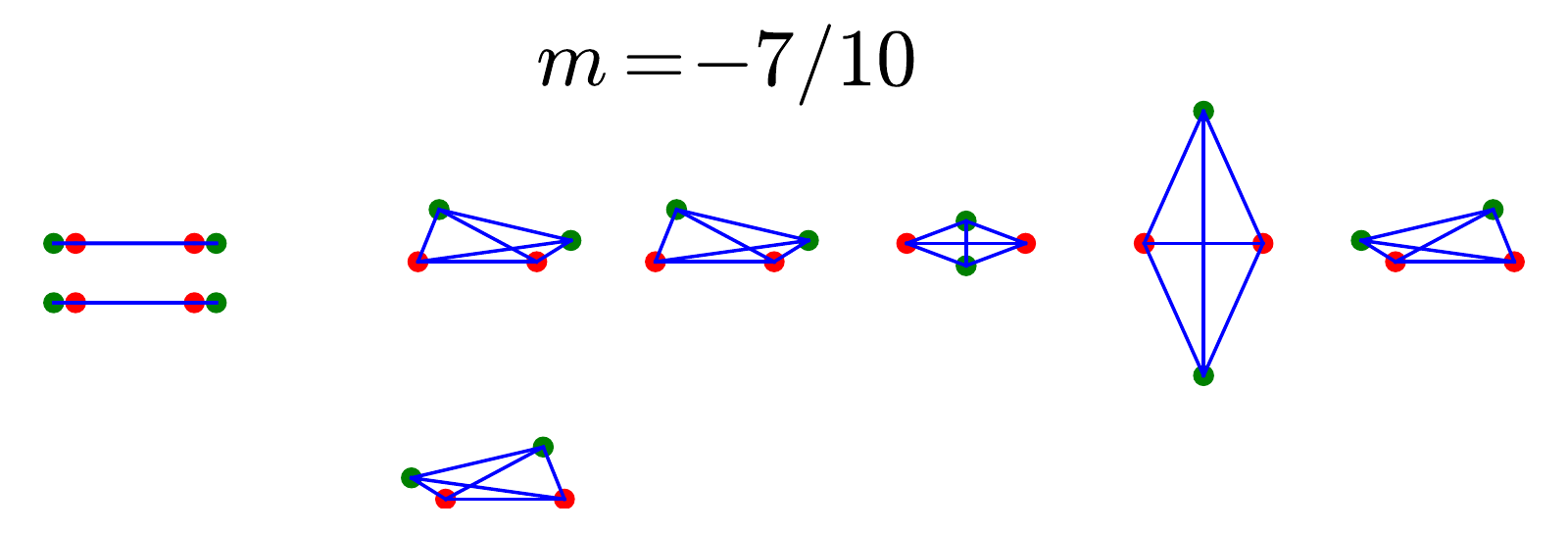}
 \caption{The full set of solutions for $m = 2/5$, $m = -1/5$ and $m = -7/10$. 
 Vortices $\Gamma_1 = \Gamma_2 = 1$ are denoted by red disks and vortices
 $\Gamma_3 = \Gamma_4 = m$ by green ones. }
 \label{fig:all-sols} 
 \end{figure}

When $m=1$ (all vortex strengths equal), there are 26 solutions, all symmetric.  Allowing for relabeling, there are only three geometrically
distinct configurations: a square, an equilateral triangle with a vortex at the center, and a collinear configuration.
This is different than the Newtonian case, where an additional symmetric, concave solution exists, consisting
of an isosceles triangle with an interior body on the axis of symmetry~\cite{albouy_symmetric_1996}. 
Part of the reason for the contrast is that the equilateral triangle with central vortex is degenerate when all
vortices have the same strength~\cite{meyer_bifurcations_1988, palmore_relative_1982}. 
An interesting bifurcation occurs as $m$ decreases through $m=1$, 
where the equilateral triangle solution splits into four different solutions.  
If vortex 3 or~4 is at the center of the triangle, then the solution 
for $m=1$ bifurcates into two different isosceles triangles with the interior vortex on the line of symmetry.  
In this case, the larger-strength vortices are on the base of the triangle.  If vortex 1 or~2 
is at the center of the triangle, the solution branches into two asymmetric concave configurations
that are identical under a reflection.  In this case, the weaker-strength
vortices are on the base of the triangle.  Thus, the number of solutions increases from 26 to 34
for the case $0 < m < 1$.  This concurs with Palmore's result in~\cite{palmore_relative_1982}, which
specifies a lower bound of 34 non-degenerate solutions for the four-vortex problem with positive vorticities.

As $m$ flips sign, there are two interesting bifurcation values at $m = -1/2$ and
$m = m^\ast \approx -0.5951$, the only real root of the cubic $9m^3 + 3m^2 + 7m + 5$.  
There are 26 solutions for $-1/2 < m < 0$, including a family of convex kite configurations
having the negative strength vortices on an axis of symmetry and the positive strength vortices
symmetrically located with respect to this axis.  Based on the primary result in~\cite{albouy_symmetry_2008},
which is applicable to our problem but only when $m > 0$, one might expect this convex kite to have two
axes of symmetry, forming a rhombus.   However, as demonstrated in Section~\ref{section:concave-kites},
this is not the case.  There is also a family of symmetric concave kite configurations having one of the positive
strength vortices in the interior of an isosceles triangle formed by the outer three.  
As $m$ approaches $-1/2$, the triangle containing the interior vortex and the base of the outer isosceles triangle
limits on an equilateral triangle while the fourth vortex (at the apex of the outer isosceles triangle) heads off to infinity.
As $m$ decreases through $-1/2$, this family bifurcates into a convex configuration, while the previous convex kite family disappears.
This new family of convex kites, with vortices 1 and 2 on the axis of symmetry, exists only for
$m^\ast < m < -1/2$.  In sum, there are 18 solutions for $m^\ast < m < -1/2$, and
14 solutions when $m = -1/2$ or when $-1 < m \leq m^\ast$.  The singular bifurcation at $m = -1/2$ occurs in part because 
the leading coefficient of a defining polynomial vanishes.  This is likely a
consequence of the fact that the sum of three vorticities vanishes when $m=-1/2$, a particularly troubling case when attempting to prove 
finiteness for the number of relative equilibria in the full four-vortex problem (see~\cite{hampton_finiteness_2009}).
The bifurcation at $m = m^\ast$ is a pitchfork bifurcation, as discussed in Section~\ref{rhombus_sec}.

In the next section we define a relative equilibrium and explain how to use mutual distances as variables in the four-vortex
problem.  In Section~\ref{section:algtechs}, we describe the relevant algebraic techniques used to analyze and quantify
the number of solutions.   Section~\ref{Section:SpecialCases} examines the interplay between symmetry and 
equality of vorticities in two special cases: equilibria and vanishing total vorticity.  Sections \ref{section:collinear},
\ref{section:asymmetric} and~\ref{section:symmetric} cover the collinear, asymmetric and symmetric cases, respectively, in considerable detail.
Throughout our research, symbolic computations (e.g., calculation of Gr\"obner bases) were performed using 
Sage~\cite{stein_sage_2011}, {\sc Singular}~\cite{decker_singular_2011}  and Maple${15}^\mathrm{TM}$.

\section{Relative Equilibria}

We begin with the equations of motion and the definition of a relative equilibrium for the 
$n$-vortex problem.  Let $J=\begin{bmatrix}0&1\\-1&0\end{bmatrix}$,
and let  $\nabla_j$ denote the two-dimensional partial gradient with respect to $x_j$.
A system of $n$ planar point vortices with vortex strengths $\Gamma_i\neq 0$ and positions $x_i\in \mathbb{R}^2$ evolves according to 
\begin{equation}\label{eqmotion}
\Gamma_i\dot x_i=J\nabla_i H = -J \sideset{}{'}\sum_{j=1}^n \frac{\Gamma_i\Gamma_j}{r_{ij}^2}(x_i-x_j), \quad 1\leq i\leq n
\end{equation}
where  $H=-\sum_{i<j}\Gamma_i\Gamma_j\log r_{ij}$, $r_{ij}=\|x_i-x_j\|$, and the prime on the summation indicates omission of the term with $j=i$. 

A {\em relative equilibrium} motion is  a solution of (\ref{eqmotion}) of the form $x_i(t)=c+e^{-J\lambda t}(x_i(0)-c)$, that is, a periodic 
solution given by a uniform rotation with angular velocity $\lambda\neq 0$ around some point $c\in \mathbb{R}^2$. Such a solution is possible if and only if 
the initial positions $x_i(0)$ satisfy the equations
\begin{equation}\label{eqcc}
- \lambda(x_i(0)-c)= \frac{1}{\Gamma_i}\nabla_i H =  \sideset{}{'}\sum_{j=1}^n \frac{\Gamma_j}{r_{ij}^2}(x_j(0)-x_i(0))
\end{equation}
for each $i \in \{1, \ldots, n \}$.
Denote $\Gamma = \sum_i\Gamma_i$ as the {\em total circulation} and assume for the moment that $\Gamma \neq 0$.
Multiplying the $i$-th equation in~(\ref{eqcc}) by $\Gamma_i$ and summing over $i$ shows that the center of rotation $c$ is equivalent
to the {\itshape center of vorticity}, 
$c=\frac{1} {\Gamma}\sum_i\Gamma_i x_i$.  If $\Gamma = 0$, then we obtain instead that the {\itshape moment of vorticity} 
$\sum_i \Gamma_i x_i$ must vanish.  

\begin{definition}
A set of initial positions $x_i(0)$ satisfying equation~(\ref{eqcc}) for each $i \in \{1, \ldots, n \}$ is called a {\itshape central configuration}. 
The corresponding rigid rotation with angular velocity $\lambda \neq 0$ is called a {\itshape relative equilibrium}.  
We will often use these two terms interchangeably. 
\end{definition}

Define the {\em moment of inertia} $I$ with respect to the center of vorticity as
\[
I=\frac 1 2\sum_{i=1}^n \Gamma_i\|x_i - c\|^2  \, .
\]
$I$ measures the size of the system.  We can then rewrite equation~(\ref{eqcc}) as 
\begin{equation}\label{eqcc2}
\nabla H + \lambda \nabla (I-I_0) = 0
\end{equation}
where $\nabla=(\nabla_1,\ldots, \nabla_n)$ and $I=I_0$.
Therefore, $\lambda$ can be viewed as a Lagrange multiplier and any solution of~(\ref{eqcc2}) can be interpreted as 
a critical point of the Hamiltonian $H(x)$ under the condition that $I$ remains constant.  
Using the homogeneity of the functions $H$ and $I$, equation~(\ref{eqcc2}) implies that the angular
velocity $\lambda$ in a relative equilibrium is given by
\begin{equation}
\lambda = \frac{L}{2I}  \quad \mbox{ where } \quad  L = \sum_{i < j}^n \Gamma_i \Gamma_j
\label{eq:lambda}
\end{equation}
is the {\em total vortex angular momentum}.  This implies that in the case where all vortex strengths are positive, $\lambda > 0$
and the relative equilibrium is rotating in the counterclockwise direction.  It is also important to note that for
any relative equilibrium with a particular set of vortex strengths $\Gamma_i$, we can scale the vorticities
by some common factor $\nu$ and maintain the relative equilibrium, but with a new angular velocity
$\nu \lambda$.  If $\nu < 0$, then the sign of $\lambda$ flips, as does the direction of rotation.

Unlike the $n$-body problem, it is possible to have equilibria in the $n$-vortex problem (where $\lambda = 0$).
For $n=4$, explicit solutions can be derived (see~\cite{hampton_finiteness_2009}).  
An analysis of the equilibrium solutions in the context of symmetry is presented in Section~\ref{symmetric_equilibria}.

\subsection{Using mutual distances as coordinates} 
\label{sec:distancesascoordinates}

We now consider the case of $n=4$ vortices. Our presentation  follows the approach of \cite{schmidt_central_2002} in 
describing the work of Dziobek~\cite{dziobek_ueber_1900} for the Newtonian $n-$body problem. We want to express equation~(\ref{eqcc2}) in 
terms of the mutual distance variables $r_{ij}$. Between four vortices  there are six mutual distances, which are not independent if the vortices are planar.  
In the planar case the distances satisfy the following condition, which can be interpreted as the vanishing of the volume of the 
tetrahedron formed by the four vortices:
\[e_{CM}=\begin{vmatrix} 0&1&1&1&1\\1&0&r_{12}^2&r_{13}^2&r_{14}^2\\1&r_{12}^2&0&r_{23}^2&r_{24}^2\\1&r_{13}^2&r_{23}^2&0&r_{34}^2\\ 
1&r_{14}^2&r_{24}^2&r_{34}^2&0 \end{vmatrix}=0.\]
The matrix in the above determinant is known as the Cayley-Menger matrix.

Hence, planar central configurations are obtained as critical points of 
\begin{equation} \label{lagrange_mult_eq}
H + \lambda (I-I_0)+ \frac{\mu}{32}e_{CM}.
\end{equation}
Using the homogeneity of $H, I$ and $e_{CM}$, the value of $\lambda$ in this setup is identical
to one given in equation~(\ref{eq:lambda}).
To find $\nabla e_{CM}$ restricted to planar configurations, we use the following important formula
\[
\frac{\partial e_{CM}}{\partial r_{ij}^2}=-32 A_iA_j
\]
where $A_i$ is the oriented area of the triangle $T_i$ whose vertices are all the vortices except for the $i$-th body. 
Setting the gradient of (\ref{lagrange_mult_eq}) equal to zero yields the equations
\[
\frac{\partial H}{\partial r_{ij}^2} + \lambda \frac{\partial I}{\partial r_{ij}^2} + \frac{\mu}{32} \frac{\partial e_{CM}}{\partial r_{ij}^2}=0.
\]

If $\Gamma \neq 0$, then $I$ can be written in terms of the mutual distances as
$$
I = \frac{1}{2 \Gamma} \sum_{i < j} \Gamma_i \Gamma_j r_{ij}^2 \, .
$$
Using this, we obtain the following equations for a four-vortex central configuration:
\begin{equation}
\Gamma_i \Gamma_j (r_{ij}^{-2}+\lambda') = \sigma A_i A_j
\label{eq:DzioStart}
\end{equation}
where $\lambda' = - \lambda/ \Gamma$, $\sigma = -2 \mu$, $I = I_0$ and $e_{CM} = 0$. 
If $\Gamma = 0$, then a different approach is more useful (see~\cite{celli_2005, hampton_finiteness_2009} for expositions of the equations).  
We discuss the role of specific symmetries when $\Gamma = 0$ in Section~\ref{gamma0_sec}.

Assuming $\Gamma \neq 0$, we group the equations in~(\ref{eq:DzioStart}) so that when they are multiplied together pairwise, their right-hand sides are identical:
\begin{equation}\label{eqDziobek1}\begin{split}
&\Gamma_1\Gamma_2(r_{12}^{-2}+\lambda')=\sigma A_1A_2, \quad \Gamma_3\Gamma_4(r_{34}^{-2}+\lambda')=\sigma A_3A_4\\
&\Gamma_1\Gamma_3(r_{13}^{-2}+\lambda')=\sigma A_1A_3, \quad \Gamma_2\Gamma_4(r_{24}^{-2}+\lambda')=\sigma A_2A_4\\
&\Gamma_1\Gamma_4(r_{14}^{-2}+\lambda')=\sigma A_1A_4, \quad \Gamma_2\Gamma_3(r_{23}^{-2}+\lambda')=\sigma A_2A_3.
\end{split}
\end{equation}
 This yields the well-known Dziobek equations~\cite{dziobek_ueber_1900}, but for vortices: 
\begin{equation}
(r_{12}^{-2}+\lambda')(r_{34}^{-2}+\lambda')=(r_{13}^{-2}+\lambda')(r_{24}^{-2}+\lambda')=(r_{14}^{-2}+\lambda')(r_{23}^{-2}+\lambda').
\label{eq:Dzio}
\end{equation}

From the different ratios of two masses that can be found from the equations in~(\ref{eqDziobek1}) we obtain the following equations:
\begin{equation}\label{eqDziobek2}
\begin{split}
 \frac{\Gamma_1A_2}{\Gamma_2A_1}&=\frac{\rho_{23}+\lambda'}{\rho_{13}+\lambda'}=\frac{\rho_{24}+\lambda'}{\rho_{14}+\lambda'}=\frac{\rho_{23}-\rho_{24}}{\rho_{13}-\rho_{14}}\\
\frac{\Gamma_1A_3}{\Gamma_3A_1}&=\frac{\rho_{23}+\lambda'}{\rho_{12}+\lambda'}=\frac{\rho_{34}+\lambda'}{\rho_{14}+\lambda'}=\frac{\rho_{23}-\rho_{34}}{\rho_{12}-\rho_{14}}\\
\frac{\Gamma_1A_4}{\Gamma_4A_1}&=\frac{\rho_{24}+\lambda'}{\rho_{12}+\lambda'}=\frac{\rho_{34}+\lambda'}{\rho_{13}+\lambda'}=\frac{\rho_{24}-\rho_{34}}{\rho_{12}-\rho_{13}}\\
\frac{\Gamma_2A_3}{\Gamma_3A_2}&=\frac{\rho_{13}+\lambda'}{\rho_{12}+\lambda'}=\frac{\rho_{34}+\lambda'}{\rho_{24}+\lambda'}=\frac{\rho_{13}-\rho_{34}}{\rho_{12}-\rho_{24}}\\
\frac{\Gamma_2A_4}{\Gamma_4A_2}&=\frac{\rho_{14}+\lambda'}{\rho_{12}+\lambda'}=\frac{\rho_{34}+\lambda'}{\rho_{23}+\lambda'}=\frac{\rho_{14}-\rho_{34}}{\rho_{12}-\rho_{23}}\\
\frac{\Gamma_3A_4}{\Gamma_4A_3}&=\frac{\rho_{14}+\lambda'}{\rho_{13}+\lambda'}=\frac{\rho_{24}+\lambda'}{\rho_{23}+\lambda'}=\frac{\rho_{14}-\rho_{24}}{\rho_{13}-\rho_{23}}\\
\end{split}
\end{equation}
where $\rho_{ij}=r_{ij}^{-2}$.

Eliminating $\lambda'$ from equation~(\ref{eq:Dzio}) and factoring yields the important relation
\begin{equation}
(r_{13}^2 - r_{12}^2)(r_{23}^2 - r_{34}^2)(r_{24}^2-r_{14}^2) \; = \; (r_{12}^2 - r_{14}^2)(r_{24}^2 - r_{34}^2)(r_{13}^2-r_{23}^2).
\label{eq:CCfactor}
\end{equation}
Assuming the six mutual distances determine an actual configuration in the plane, this equation is necessary and sufficient for the existence of a four-vortex relative equilibrium.  The corresponding vortex strengths are then found from the equations in~(\ref{eqDziobek2}).

Relationships between the lengths of the sides in a four-vortex central configuration follow from the equations in~(\ref{eqDziobek2})
and the signs of the oriented areas $A_i$.  If the configuration of vortices is concave, precisely three of the oriented areas have the same sign. 
In the convex case, two oriented areas are positive and two are negative.  

The sign of $\lambda' = - \lambda/ \Gamma$ for a given relative equilibrium is of some interest.  Note that scaling the vorticities by any $\nu \in \mathbb{R} - \{0\}$
does {\em not} change the value of $\lambda'$.  If all the vorticities have the same sign, then $\lambda' < 0$ is assured.  However, if the vorticities have different
signs, it is possible that $\lambda'$ could become positive.   

When $\lambda' > 0$, the equations in~(\ref{eqDziobek2}) imply that $\frac{\Gamma_i A_j}{\Gamma_j A_i} > 0$
for any choice of indices $i$ and $j$.  Taking $\Gamma_1 = \Gamma_2 = 1$ and $\Gamma_3 = \Gamma_4 = m$ with $m < 0$, we see that the only possible
solutions have $A_1, A_2 > 0$ and $A_3, A_4 < 0$, or $A_1, A_2 < 0$ and $A_3, A_4 > 0$.  The configuration must be convex with vortices 1 and~2 on one
diagonal and vortices 3 and~4 on the other.  We show in Section~\ref{sec:kites-lampos} that the configuration must have at least one axis of symmetry.
There exists a family of convex kite configurations for $m^\ast < m < -1/2$ and a family of rhombi (see Section~\ref{rhombus_sec}) for 
$-1 \leq m < -2 + \sqrt{3}$.  These are the only possible solutions to our problem having $\lambda' > 0$.

\begin{proposition}\label{prop:concave}
Suppose we have a concave central configuration with four vortices and $\Gamma \neq 0$.
 \begin{enumerate}
 \item If all the vorticities are positive (negative), then  all exterior sides are longer 
than the interior ones and $1/\sqrt{-\lambda'}$ is less than the lengths of all exterior sides
and greater than the lengths of all interior sides. 
 \item If two of the vorticities are positive and two are negative, then  the exterior sides 
 connecting vortices with vorticities of opposite sign and the interior side connecting vortices 
 with vorticities of the same sign have lengths greater than $1/\sqrt{-\lambda'}$, 
 while the remaining sides have length less than $1/\sqrt{-\lambda'}$.
 \end{enumerate}
\end{proposition}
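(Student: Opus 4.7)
The plan is to work with the Dziobek equations~\eqref{eq:DzioStart} in the form $\Gamma_i\Gamma_j S_{ij}=\sigma A_iA_j$, where $S_{ij}:=r_{ij}^{-2}+\lambda'$. First I would record that $\lambda'<0$ in both cases of the proposition: in part~1 this follows from $\lambda=L/(2I)>0$ with $L,\Gamma>0$; in part~2 it follows from the discussion preceding the proposition, which rules out $\lambda'>0$ in any concave configuration. Hence $1/\sqrt{-\lambda'}$ is real, and comparing $r_{ij}$ to this threshold is equivalent to reading the sign of $S_{ij}$. Next I would fix an orientation on the signed areas so that, in a concave configuration with interior vortex~$k$, the products $A_kA_j$ (for $j\ne k$) all share one sign and the products $A_iA_j$ (for $i,j\ne k$, $i\ne j$) all share the opposite sign. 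Reading the Dziobek equation then determines $\operatorname{sgn}(S_{ij})$ as $\operatorname{sgn}(\sigma)\cdot\operatorname{sgn}(A_iA_j\,\Gamma_i\Gamma_j)$, and a direct sign check shows that the length inequalities asserted in parts~1 and~2 are equivalent, respectively, to $\sigma<0$ and $\sigma>0$. The remaining task is thus to pin down the sign of~$\sigma$ in each case.

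For part~1 this is settled by a short disk-containment argument: if $\sigma>0$, then every exterior edge $r_{ij}$ would be less than $1/\sqrt{-\lambda'}$, so the closed disk of radius $1/\sqrt{-\lambda'}$ about any exterior vortex $x_i$ would contain the other two exterior vortices, and hence, by convexity, the whole outer triangle, and in particular the interior vortex $x_k$. This forces $r_{ki}<1/\sqrt{-\lambda'}$, contradicting the simultaneous prediction $r_{ki}>1/\sqrt{-\lambda'}$. Hence $\sigma<0$ and part~1 follows by reading off the sign pattern.

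For part~2 I would relabel so the interior vortex is vortex~1 with $\Gamma_1=\Gamma_2=1$ and $\Gamma_3=\Gamma_4=m$ of opposite sign; the remaining sub-cases (interior vortex~$2$, $3$, or $4$) are handled identically after relabeling. Assuming $\sigma$ has the wrong sign yields $r_{12},r_{23},r_{24}<1/\sqrt{-\lambda'}<r_{13},r_{14},r_{34}$. The main obstacle is that, since $r_{34}>1/\sqrt{-\lambda'}$, no single disk of that radius covers the outer triangle~$234$; one instead has to show that the union $D_3\cup D_4$ of the two disks of radius $1/\sqrt{-\lambda'}$ about $x_3$ and $x_4$ does cover the triangle. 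I would do this by bounding the altitude $h_2$ from the apex $x_2$ to the line through $x_3,x_4$: combining $h_2\le m_2$ with Apollonius' median formula $m_2^2=(2r_{23}^2+2r_{24}^2-r_{34}^2)/4$ and the hypothesis $r_{23}^2+r_{24}^2<-2/\lambda'$ yields $h_2^2<-1/\lambda'-r_{34}^2/4$. Placing $x_3$ and $x_4$ on a horizontal axis, this is precisely the condition that at every height $y\in[0,h_2]$ the horizontal cross-sections of $D_3$ and $D_4$ already overlap, so the union covers every $x$-coordinate realised by the triangle at that height. Thus vortex~1, lying in the triangle, must lie in $D_3\cup D_4$, forcing $r_{13}<1/\sqrt{-\lambda'}$ or $r_{14}<1/\sqrt{-\lambda'}$; this is the contradiction. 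Hence $\sigma>0$ in part~2 and the sign pattern delivers the claimed length comparisons.
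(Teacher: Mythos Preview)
Your argument is correct and follows the same overall skeleton as the paper: read off the two possible sign patterns for $S_{ij}=r_{ij}^{-2}+\lambda'$ from the Dziobek equations and then rule out the geometrically unrealizable one.  The difference lies in how you exclude the bad pattern.  For part~1 the paper simply asserts that the interior edges cannot all exceed the exterior ones, whereas you give an explicit disk--containment proof of this fact; the two amount to the same thing.  For part~2 the paper argues via an area comparison: with the reversed inequalities one would have $|A_4|<|A_3|$ (comparing the triangles on the common side~$12$), which contradicts the containment $T_3\subset T_4$ forced by concavity.  Your route is genuinely different: you bound the altitude of $x_2$ over the base $x_3x_4$ via the median and Apollonius' formula, conclude that the disks $D_3,D_4$ of radius $1/\sqrt{-\lambda'}$ overlap at every relevant height, and deduce that their union already covers the outer triangle and hence the interior vortex.

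Two small points worth tightening.  First, when you invoke ``the discussion preceding the proposition'' to get $\lambda'<0$ in part~2, note that the paper phrases that discussion for the special vorticities $\Gamma_1=\Gamma_2=1$, $\Gamma_3=\Gamma_4=m$; the general $2{+}2$ statement you need follows from the same observation that $\lambda'>0$ forces $\Gamma_iA_j/(\Gamma_jA_i)>0$ for all $i,j$, which is incompatible with concavity unless the interior vortex has a sign opposite to all three exterior ones.  Second, in your covering step the sentence ``so the union covers every $x$-coordinate realised by the triangle at that height'' uses more than just the overlap of the cross-sections: you are also using that the edge $x_3x_2$ lies in $D_3$ (since $r_{23}<R$) and $x_4x_2$ lies in $D_4$ (since $r_{24}<R$), so the left and right boundaries of the triangle at height~$y$ already sit inside the respective disks.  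With that said explicitly, the containment of the triangular slice in $D_3\cup D_4$ follows.  Your labeling ``$\Gamma_1=\Gamma_2=1$, $\Gamma_3=\Gamma_4=m$'' is harmless since only signs enter the argument, but it would be cleaner to say ``$\Gamma_1,\Gamma_2>0$ and $\Gamma_3,\Gamma_4<0$'' to match the generality of the proposition.
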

\begin{proof}
 (1) Let $\Gamma_4$ be the interior vorticity and suppose that $A_4<0$.  Then we have $A_1, A_2, A_3 > 0$. 
 Assume that $\rho_{34} > -\lambda'$.  Then, 
 using the equations in~(\ref{eqDziobek2}), we obtain
 \[
 \rho_{12},\rho_{13},\rho_{23}<-\lambda'<\rho_{14},\rho_{24},\rho_{34},
 \]
or 
\[
r_{12},r_{13},r_{23}>1/\sqrt{-\lambda'}>r_{14},r_{24},r_{34},
\]
namely all the exterior edges are longer than the interior ones. 
On the other hand, if we had assumed $\rho_{34}<-\lambda'$, then the inequalities above would be reversed
and the configuration could not be realized geometrically since the interior sides cannot all be longer than the exterior sides. 

(2)  Let $\Gamma_4$ be the interior vorticity. Let $\Gamma_3, \Gamma_4<0$, and $\Gamma_1,\Gamma_2>0$. 
Furthermore, suppose that $A_4<0$.   Then we have $A_1, A_2, A_3 > 0$. 
If $\rho_{34}<-\lambda'$, then, using the equations in~(\ref{eqDziobek2}), we obtain 
 \[
 \rho_{13},\rho_{23},\rho_{34}<-\lambda'<\rho_{12},\rho_{14},\rho_{24},
 \]
or 
\[
r_{13},r_{23},r_{34}>1/\sqrt{-\lambda'}>r_{12},r_{14},r_{24}.
\]
If we had  assumed $\rho_{34}>-\lambda'$, then the inequalities above would be reversed and the configuration
could not be realized geometrically. We show this with a proof by contradiction. Assume  $\rho_{34}>-\lambda'$. 
Then $|A_4|<|A_3|$ since the two triangles have one side in common
and $r_{24}, r_{14} > r_{23}, r_{13}$. But this is absurd since $T_4$ contains $T_3$.

 Let $\Gamma_1$ be the interior vorticity. Let $\Gamma_3,\Gamma_4<0$, and $\Gamma_1,\Gamma_2>0$. 
Furthermore, suppose that $A_1<0$.  Then we have $A_2, A_3, A_4 > 0$. 
If $\rho_{34}>-\lambda'$ then, using the equations in~(\ref{eqDziobek2}), we obtain 
 \[
 \rho_{12},\rho_{23},\rho_{24}<-\lambda'<\rho_{13},\rho_{14},\rho_{34},
 \]
or 
\[
r_{12},r_{23},r_{24}>1/\sqrt{-\lambda'}>r_{13},r_{14},r_{34}.
\]
If we had assumed $\rho_{34}<-\lambda'$ then the inequalities above would be reversed, and the configurations 
could not be realized geometrically (the proof is similar to the argument in the previous paragraph).
\end{proof}

Before we state an analogous theorem for the convex case we recall the following useful geometric lemma: 

\begin{lemma}\label{geometriclemma}
The combined length of the diagonals of a convex quadrilateral is greater than the combined length of any pair of its opposite sides. 
\end{lemma}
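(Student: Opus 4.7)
The plan is to exploit the fact that a convex quadrilateral has its two diagonals intersecting at a point strictly interior to the figure, and then to apply the triangle inequality to the four triangles that the diagonals cut out.

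Label the vertices of the convex quadrilateral $A, B, C, D$ in cyclic order, so that the diagonals are $AC$ and $BD$, and the two pairs of opposite sides are $\{AB, CD\}$ and $\{BC, AD\}$. By convexity, the segments $AC$ and $BD$ meet at a point $E$ lying strictly in the interior of the quadrilateral, and this point splits each diagonal into two positive-length pieces with $|AC| = |AE| + |EC|$ and $|BD| = |BE| + |ED|$.

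Next, I would apply the triangle inequality to the four small triangles sharing the vertex $E$. In particular, the strict triangle inequality in $\triangle ABE$ gives $|AB| < |AE| + |BE|$, and in $\triangle CDE$ it gives $|CD| < |CE| + |DE|$. Adding these two inequalities and regrouping terms yields
\[
|AB| + |CD| \; < \; (|AE| + |CE|) + (|BE| + |DE|) \; = \; |AC| + |BD|,
\]
which is the desired inequality for the first pair of opposite sides. The argument for the other pair is identical, using $\triangle BCE$ and $\triangle ADE$ to obtain $|BC| + |AD| < |AC| + |BD|$.

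There is no real obstacle beyond verifying that the diagonals actually cross in the interior, which is precisely the defining feature of convexity being used; for a nonconvex quadrilateral the diagonals might not meet inside, so the decomposition into the four triangles at $E$ would fail, and the inequality itself can become false. Since the statement of the lemma assumes convexity, this issue does not arise.
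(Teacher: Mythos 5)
Your proof is correct and is essentially identical to the paper's: both locate the intersection point of the diagonals (guaranteed interior by convexity), apply the strict triangle inequality to the two triangles formed by that point with each pair of opposite vertices, and add the resulting inequalities. Your write-up is slightly more explicit about why the intersection point exists and why convexity is essential, but the argument is the same.
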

\begin{proof}
 Order the vortices counterclockwise. Let $o$ be the position vector describing the intersection of the diagonals, and let $r_{io}=\|x_i-o\|$.
 Applying the triangle inequality to the triangle of vertices $x_1,x_2$ and $o$ and to the one of vertices $x_3,x_4$ and $o$ yields
 \[
 r_{1o}+r_{2o}>r_{12} \quad \mbox{ and } \quad  r_{3o}+r_{4o}>r_{34}.
 \]
 Adding these two inequalities together, we obtain $r_{13}+r_{24} > r_{12}+r_{34}$. 
 A similar reasoning can be applied to the remaining two triangles. 
\end{proof}

\begin{proposition}\label{prop:convex}
Suppose we have a convex central configuration with four vortices, with $\Gamma \neq 0$ and $\lambda' < 0$.
 \begin{enumerate}
  \item If all the vorticities are positive (negative), then all exterior sides are shorter than the diagonals. 
Furthermore, the lengths of all exterior sides are less than $1/\sqrt{-\lambda'}$ and the lengths of all 
the diagonals are greater than  $1/\sqrt{-\lambda'}$. The shortest and longest exterior sides have to face each other. 
  \item If two of the vorticities are positive and adjacent, and two remaining ones are negative, then the exterior 
sides connecting vortices with vorticities of opposite sign have length less than $1/\sqrt{-\lambda'}$.  All the other sides have length greater than $1/\sqrt{-\lambda'}$.
  \item If two of the vorticities are positive and opposite, and two are negative, then either all the sides have length  
less than  $1/\sqrt{-\lambda'}$ , or all the sides have length greater than $1/\sqrt{-\lambda'}$.
 \end{enumerate}
\end{proposition}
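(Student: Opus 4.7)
The strategy mirrors the proof of Proposition~\ref{prop:concave}: use the sign information encoded in the Dziobek equations~(\ref{eq:DzioStart}) to compare each $r_{ij}$ with the threshold $1/\sqrt{-\lambda'}$, then discard geometrically impossible sign assignments via Lemma~\ref{geometriclemma}.

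The first step is to pin down the sign pattern of the oriented areas in the convex case. With the vortices labeled $1,2,3,4$ counterclockwise around the convex hull (the standard convention making $\sum_i A_i = 0$), the two ``diagonal'' products $A_1 A_3$ and $A_2 A_4$ are positive while the four ``side'' products $A_1 A_2$, $A_2 A_3$, $A_3 A_4$, $A_1 A_4$ are negative. Inserting this, together with the sign pattern of $\Gamma_i \Gamma_j$ dictated by the hypothesis of each part, into~(\ref{eq:DzioStart}) determines $\sign(r_{ij}^{-2} + \lambda')$ up to the single unknown sign of $\sigma$. Since $\lambda' < 0$, this sign tells us on which side of $1/\sqrt{-\lambda'}$ the length $r_{ij}$ lies.

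In part~(1), every $\Gamma_i \Gamma_j$ is positive, so $\sign(r_{ij}^{-2} + \lambda')$ equals $\sign(\sigma)$ on the two diagonals and $-\sign(\sigma)$ on the four sides. If one had $\sigma > 0$, every side would exceed $1/\sqrt{-\lambda'}$ while both diagonals would fall below it, yielding $r_{12} + r_{34} > 2/\sqrt{-\lambda'} > r_{13} + r_{24}$, contradicting Lemma~\ref{geometriclemma}. Thus $\sigma < 0$, giving exactly the inequalities in~(1). For part~(2), writing $\Gamma_1, \Gamma_2$ for the adjacent pair of one sign, the same bookkeeping yields $\sign(r_{14}^{-2} + \lambda') = \sign(r_{23}^{-2} + \lambda') = \sign(\sigma)$ and $-\sign(\sigma)$ for the remaining four pairs, and Lemma~\ref{geometriclemma} again rules out $\sigma < 0$ via $r_{13} + r_{24} > r_{14} + r_{23}$. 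In part~(3), with $\Gamma_1, \Gamma_3$ the opposite positive pair, one checks directly that $\Gamma_i \Gamma_j$ and $A_i A_j$ always share a sign, so every pair carries $\sign(r_{ij}^{-2} + \lambda') = \sign(\sigma)$; no geometric obstruction arises, and both $\sigma > 0$ (all edges short) and $\sigma < 0$ (all edges long) are consistent, producing precisely the dichotomy in~(3).

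To complete part~(1) I would establish the ``longest and shortest opposite sides face each other'' claim using the second form of the ratio identities in~(\ref{eqDziobek2}). For instance, $\Gamma_1 A_3 / (\Gamma_3 A_1) = (\rho_{23} - \rho_{34})/(\rho_{12} - \rho_{14})$ is positive under the hypothesis of part~(1), so $\rho_{23} - \rho_{34}$ and $\rho_{12} - \rho_{14}$ have the same sign; since $r \mapsto r^{-2}$ is decreasing, this is the biconditional $(r_{12} \geq r_{14}) \Leftrightarrow (r_{23} \geq r_{34})$. The companion identity $\Gamma_2 A_4/(\Gamma_4 A_2) = (\rho_{14} - \rho_{34})/(\rho_{12} - \rho_{23})$ similarly gives $(r_{12} \geq r_{23}) \Leftrightarrow (r_{14} \geq r_{34})$. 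If $r_{12}$ is the longest exterior side, both biconditionals force $r_{34}$ to be the shortest---which is the side opposite to $r_{12}$. The main obstacle I anticipate is not deep: it is being scrupulous about the sign convention for the $A_i$ in the convex setting, since the entire cascade of conclusions turns on identifying which pairs of triangles have same- versus opposite-signed areas.
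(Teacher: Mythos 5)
Your proposal is correct, and for parts (2) and (3) it is essentially the paper's own argument in different clothing: the paper splits into cases according to whether $\rho_{34}=r_{34}^{-2}$ lies above or below $-\lambda'$ and propagates signs through the ratio equations~(\ref{eqDziobek2}), while you split on $\sign(\sigma)$ in the product form~(\ref{eq:DzioStart}). Since each $\sign(\rho_{ij}+\lambda')$ is determined by $\sign(\sigma)$ together with the fixed signs of $\Gamma_i\Gamma_j$ and $A_iA_j$, the two dichotomies are interchangeable, and both proofs kill the impossible branch with Lemma~\ref{geometriclemma} in the same way (and in part (3) both correctly find no obstruction, so the dichotomy survives). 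Your area bookkeeping is right: for a convex quadrilateral labeled cyclically the $A_i$ alternate in sign, so diagonal pairs give $A_iA_j>0$ and side pairs $A_iA_j<0$, which matches the sign assignments the paper states in its proofs of (2) and (3). Where you genuinely add content is part (1): the paper disposes of it with a citation to the Newtonian analogue in~\cite{schmidt_central_2002}, whereas you actually prove it, and your derivation of the ``shortest and longest sides face each other'' claim from the difference-quotient identities $\frac{\Gamma_1A_3}{\Gamma_3A_1}=\frac{\rho_{23}-\rho_{34}}{\rho_{12}-\rho_{14}}$ and $\frac{\Gamma_2A_4}{\Gamma_4A_2}=\frac{\rho_{14}-\rho_{34}}{\rho_{12}-\rho_{23}}$ is exactly the standard Dziobek-style argument that reference contains; the two biconditionals do force $r_{34}$ to be minimal once $r_{12}$ is maximal, and the remaining cases follow by cyclic relabeling. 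Two harmless loose ends, which the paper's version shares: the dichotomy tacitly assumes $\sigma\neq 0$, which is safe since $\sigma=0$ would force all six mutual distances to equal $1/\sqrt{-\lambda'}$, impossible for four distinct planar points; and tie cases in the biconditionals are cleanest if you argue from the cleared form $\Gamma_1A_3(\rho_{12}-\rho_{14})=\Gamma_3A_1(\rho_{23}-\rho_{34})$ rather than the quotient, to avoid a vanishing denominator.
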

\begin{proof}
(1) The proof is analogous to the one for the Newtonian four-body problem (see~\cite{schmidt_central_2002}).

(2) Order the vortices counterclockwise and let $\Gamma_1, \Gamma_2>0$ and $\Gamma_3,\Gamma_4<0$. The sign of the 
four areas are then $A_1>0, A_3>0$ and $A_2<0, A_4<0$. If we assume that $\rho_{34}>-\lambda'$, then
we find from the equations in~(\ref{eqDziobek2}) that 
\[
\rho_{12},\rho_{13},\rho_{24},\rho_{34}<-\lambda'<\rho_{14},\rho_{23},
\]
or
\[
r_{12},r_{13},r_{24},r_{34}>1/\sqrt{-\lambda'}>r_{14},r_{23}.
\]
On the other hand, if we had assumed that $\rho_{34}<-\lambda'$, then the inequalities above would be reversed and, 
by Lemma~\ref{geometriclemma}, the configuration could not be realized geometrically.

(3) Let $\Gamma_1,\Gamma_2>0$ and $\Gamma_3,\Gamma_4<0$, and assume the vortices of the same sign are opposite
one another. 
The sign of the four areas are then $A_1 > 0, A_2 > 0$ and $A_3 < 0, A_4 < 0$. If we assume that $\rho_{34} > -\lambda'$, 
then we find from the equations in~(\ref{eqDziobek2}) that 
\[
\rho_{12},\rho_{13},\rho_{24},\rho_{34},\rho_{14},\rho_{23}>-\lambda',
\]
or
\[
r_{12}, r_{13}, r_{24}, r_{34}, r_{14}, r_{23} < 1/\sqrt{-\lambda'}.
\]
If  $\rho_{34} < -\lambda'$, then the inequalities are reversed.
\end{proof}

\subsection{Symmetric configurations}
\label{section:SymmConf}

One immediate consequence of the Dziobek equations~(\ref{eq:Dzio}) and of equation~(\ref{eq:CCfactor}) is that if two mutual distances
containing a common vortex are equal (e.g., $r_{12} = r_{13}$), then the same equality of distances is true for the 
excluded vortex (here, $r_{24} = r_{34}$).  Specifically, if $i, j, k, l$ are distinct indices, then we have
\begin{equation}
r_{ij} = r_{ik} \; \mbox{ if and only if } \; r_{lj} = r_{lk}.
\label{eq:symm1}
\end{equation}
This relation is independent of the vortex strengths although $\Gamma_j = \Gamma_k$ necessarily must follow.
Any configuration satisfying equation~(\ref{eq:symm1}) has an axis of symmetry containing vortices $i$ and~$l$,
forming a kite configuration.  It may be either convex or concave, but it cannot contain three vortices
on a common line due to the equations in~(\ref{eqDziobek1}).  Unlike the Newtonian four-body problem, since there is no restriction here on the signs of the 
vortex strengths, it follows that {\em any} kite configuration has a corresponding set of vorticities 
that make it a relative equilibrium.  Assuming the configuration is not an equilateral triangle
with a vortex at the center, the vorticities are unique up to a common scaling factor and can be
determined by the equations in~(\ref{eqDziobek2}).

Next we consider the case where two mutual distances without a common index are equal.
In other words, suppose one of the following three equations holds:
\begin{equation}
r_{12} = r_{34},  \qquad  r_{13} = r_{24},  \qquad r_{14} = r_{23}.
\label{eq:symm2}
\end{equation}
Then, it does not necessarily follow that another pair of mutual distances must be equal.
However, if different pairs of vortices are assumed to be of equal strength, then we can conclude
an additional symmetry.  This fact will be important in Section~\ref{sec:elim-symm} 
when verifying that symmetry is required in certain cases.

\begin{lemma}
{\bf (Symmetry Lemma)}  Suppose that we have a strictly planar four-vortex relative equilibrium
with $\Gamma_1 = \Gamma_2$, $\Gamma_3 = \Gamma_4$ and $\Gamma \neq 0$.
Then, 
\begin{equation}
r_{13} \; = \; r_{24}  \quad \mbox{ if and only if } \quad r_{14} \; = \; r_{23}.
\label{eq:MainSymm}
\end{equation}  
If either equation in~(\ref{eq:MainSymm}) holds,
the configuration is convex and has either one or two axes of symmetry.  
In this case, the configuration is either 
an isosceles trapezoid with vortices 1 and~2 on one base, and 3 and~4
on the other, or it is a rhombus with vortices 1 and~2 opposite each other. 
\label{lemma:symm}
\end{lemma}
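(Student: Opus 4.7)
The plan has three stages: an algebraic reduction, a polynomial elimination, and a geometric identification of the axis of symmetry. Since the hypotheses $\Gamma_1 = \Gamma_2$ and $\Gamma_3 = \Gamma_4$ are invariant under the swap $3 \leftrightarrow 4$ (which exchanges $r_{13} \leftrightarrow r_{14}$ and $r_{23} \leftrightarrow r_{24}$), it is enough to prove the forward implication in~(\ref{eq:MainSymm}).

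First, assume $r_{13} = r_{24}$. Taking the ratio of the $(1,3)$- and $(2,4)$-equations in~(\ref{eqDziobek1}) and using $\Gamma_1\Gamma_3 = \Gamma_2\Gamma_4$ gives
\[
\frac{\rho_{13} + \lambda'}{\rho_{24} + \lambda'} \;=\; \frac{A_1 A_3}{A_2 A_4},
\]
whose left side equals $1$ under the hypothesis. If $\rho_{13} + \lambda'$ were zero, then $A_1 A_3 = 0$, forcing a collinear configuration and contradicting strict planarity; hence $A_1 A_3 = A_2 A_4$. The analogous division for the $(1,4)$- and $(2,3)$-equations shows that the target conclusion $r_{14} = r_{23}$ is equivalent to $A_1 A_4 = A_2 A_3$.

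Next comes the technical heart. Using Heron's formula and substituting $r_{13} = r_{24}$ into each area squared, I obtain the factorizations
\[
16(A_3^2 - A_4^2) \;=\; (r_{14}^2 - r_{23}^2)\bigl[\,2(r_{12}^2 + r_{24}^2) - (r_{14}^2 + r_{23}^2)\,\bigr]
\]
and a parallel identity for $16(A_1^2 - A_2^2)$ in which $r_{12}^2$ is replaced by $r_{34}^2$. Squaring the relation $A_1 A_3 = A_2 A_4$ and combining with these two factorizations produces an expression of the form $(r_{14}^2 - r_{23}^2)\cdot Q = 0$, where $Q$ is a polynomial in the remaining five squared distances. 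The conclusion $r_{14} = r_{23}$ then reduces to ruling out the alternative $Q = 0$. This is the main obstacle. The plan is to adjoin to $Q = 0$ the two polynomial constraints obtained by substituting $r_{13}^2 = r_{24}^2$ into~(\ref{eq:CCfactor}) and into the Cayley-Menger relation $e_{CM} = 0$, then to verify, either by direct elimination or by a Gr\"obner basis computation in $\mathbb{Q}[r_{12}^2, r_{14}^2, r_{23}^2, r_{24}^2, r_{34}^2]$, that no strictly planar solution of the enlarged system exists unless $r_{14}^2 = r_{23}^2$.

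Finally, with both $r_{13} = r_{24}$ and $r_{14} = r_{23}$ in hand, I identify the symmetry directly. Adding and subtracting the identities $|x_1 - x_3|^2 = |x_2 - x_4|^2$ and $|x_1 - x_4|^2 = |x_2 - x_3|^2$ yields
\[
(x_1 - x_2) \cdot (M_{12} - M_{34}) = 0, \qquad (x_3 - x_4) \cdot (M_{12} - M_{34}) = 0,
\]
where $M_{ij}$ denotes the midpoint of $x_i$ and $x_j$. If $M_{12} \neq M_{34}$, the line through these midpoints perpendicularly bisects both $x_1 x_2$ and $x_3 x_4$; reflection across it swaps $1 \leftrightarrow 2$ and $3 \leftrightarrow 4$, and the two parallel segments form the bases of an isosceles trapezoid. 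If $M_{12} = M_{34}$, the four vortices form a parallelogram; placing its center at the origin and writing~(\ref{eqcc}) componentwise for vortices $1$ and $3$ shows that, unless $r_{13} = r_{14}$, the vectors $x_1$ and $x_3$ must be parallel, making the configuration collinear. Hence all four cross-distances coincide and the parallelogram is a rhombus with $1$ and $2$ at opposite corners. Convexity is immediate in both cases, completing the proof.
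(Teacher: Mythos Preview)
Your opening reduction to $A_1 A_3 = A_2 A_4$ is correct and matches the paper. The divergence comes immediately after, and it is the crux of the matter: you square this relation and pass to Heron's formula, which forces you into a polynomial $Q$ whose vanishing you then propose to exclude by a Gr\"obner computation you do not carry out. That is a genuine gap, and even as a plan it is fragile, since squaring $A_1A_3 = A_2A_4$ also admits configurations with $A_1A_3 = -A_2A_4$, so your eventual elimination must be done carefully enough to separate these.

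The paper sidesteps all of this with one line you are missing: the $A_i$ are \emph{oriented} areas, and for any planar quadrilateral they satisfy $A_1 + A_2 + A_3 + A_4 = 0$. Substituting $A_4 = -(A_1+A_2+A_3)$ into $A_1A_3 = A_2A_4$ gives the clean factorization $(A_1+A_2)(A_2+A_3)=0$. In the branch $A_1 = -A_2$ one immediately gets $A_3 = -A_4$, hence $A_1A_4 = A_2A_3$, and the $(1,4)$- and $(2,3)$-equations in~(\ref{eqDziobek1}) together with $\Gamma_1\Gamma_4 = \Gamma_2\Gamma_3$ force $r_{14}=r_{23}$ directly---no polynomial elimination needed. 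The other branch $A_2 = -A_3$ gives a parallelogram which, by~(\ref{eqDziobek1}) again, is either a rhombus or (when $r_{12}=r_{34}$) forces $\Gamma_1^2 = \Gamma_3^2$ and reduces to a square. Your final midpoint argument for identifying the isosceles trapezoid is fine and essentially equivalent to what the area relations encode, and your treatment of the parallelogram via~(\ref{eqcc}) is correct in outline; but both of these become redundant once the oriented-area identity is used, since the cases $A_1=-A_2$ and $A_2=-A_3$ already carry the geometric meaning (parallel sides $12\parallel 34$, resp.\ $14\parallel 23$).
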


\begin{proof}

\nin  Suppose that $r_{13} = r_{24}$.  Then, by one of the equations in~(\ref{eqDziobek1}), 
$\Gamma_1 \Gamma_3 (r_{13}^{-2} + \lambda') = \Gamma_2 \Gamma_4 (r_{24}^{-2} + \lambda')$
implies that 
\begin{equation}
A_1 A_3 \; = \; A_2 A_4.
\label{eq:ProdArea}
\end{equation}
We also have that 
\begin{equation}
A_1 + A_2 + A_3 + A_4  \; = \;  0
\label{eq:SumArea}
\end{equation}
since the $A_i$'s are oriented areas.  Solving equation~(\ref{eq:SumArea}) 
for $A_4$ and substituting into equation~(\ref{eq:ProdArea}) yields the relation
$$
(A_1 + A_2)(A_2 + A_3) \; = \; 0.
$$

There are two possibilities.  First, suppose that $A_1 = -A_2$.  Then equation~(\ref{eq:SumArea}) immediately
implies $A_4 = -A_3$.  
The configuration is convex due to the signs of the $A_i$'s and must have the side containing vortices
1 and~2 parallel to the side containing vortices 3 and~4.
Then, by another equation in~(\ref{eqDziobek1}), $\sigma A_1 A_4 = \sigma A_2 A_3$ implies that 
$$
\Gamma_1 \Gamma_4 (r_{14}^{-2} + \lambda') \; = \;  \Gamma_2 \Gamma_3 (r_{23}^{-2} + \lambda').
$$
Since $\Gamma_1 = \Gamma_2$ and $\Gamma_3 = \Gamma_4$, it follows that $r_{14} = r_{23}$.  The configuration must be
an isosceles trapezoid with congruent legs and diagonals, and base lengths given by $r_{12}$ and~$r_{34}$.

Next, suppose that $A_3 = -A_2$.  Then, $A_4 = -A_1$ immediately follows from equation~(\ref{eq:SumArea}).
The configuration is convex and must have the side containing vortices
1 and~4 parallel to the side containing vortices 2 and~3.  Since $r_{13} = r_{24}$, the configuration is
either an isosceles trapezoid with $r_{12} = r_{34}$ or is a parallelogram with $r_{14} = r_{23}$.
For the isosceles trapezoid, the lengths $r_{13}$ and $r_{24}$ can correspond to either a pair of congruent
diagonals or a pair of congruent legs, depending on the ordering of the vortices.  However, for the parallelogram,
these lengths must correspond to a pair of opposite sides as the diagonals do not have to be congruent.

It turns out that, in this case, the isosceles trapezoid is actually a square.  To see this, 
by an equation in~(\ref{eqDziobek1}),  $\sigma A_1 A_2 = \sigma A_3 A_4$ implies that  
$$
\Gamma_1^2 (r_{12}^{-2} + \lambda') \; = \;  \Gamma_3^2 (r_{34}^{-2} + \lambda').
$$
Thus, if $r_{12} = r_{34}$, then $\Gamma_1 = \Gamma_3$ and all vortex strengths are equal
(the case $\Gamma_1 = -\Gamma_3$ is excluded since $\Gamma \neq 0$).  In this case, it is
straight-forward to show that the isosceles trapezoid reduces to a square (see Section~\ref{isos-trap})
and thus $r_{14} = r_{23}$.

In the case of the parallelogram, we have $A_1 = A_2 = -A_3 = -A_4$.  By the
equations in~(\ref{eqDziobek1}), this implies that $r_{13} = r_{14} = r_{23} = r_{24}$ and the configuration
is a rhombus with vortices 1 and~2 opposite each other.  This proves the forward implication.

The proof in the reverse direction is similar.  If $r_{14} = r_{23}$, then we derive $A_1 A_4 = A_2 A_3$
from an equation in~(\ref{eqDziobek1}).  Taken with equation~(\ref{eq:SumArea}), this yields 
$$
(A_1 + A_2)(A_1 + A_3) \; = \; 0.
$$
As before, the case $A_1 = -A_2$ leads to an isosceles trapezoid with $r_{13} = r_{24}$ and base lengths
given by $r_{12}$ and $r_{34}$.  The case $A_1 = -A_3$ leads to either the square or a rhombus
configuration with vortices 1 and~2 across from each other.  In either configuration we deduce that $r_{13} = r_{24}$.
This completes the proof.
\end{proof}

\vs

\begin{remark}
\begin{enumerate}
\item  Similar results exist if different pairs of vortices are assumed to be equal.  For example, if 
$\Gamma_1 = \Gamma_3$ and $\Gamma_2 = \Gamma_4$, then $r_{12} = r_{34}$ if and only
if $r_{14} = r_{23}$.  

\item  The result is also valid in the Newtonian four-body problem (and for other potentials of the same
form) since it only depends on the geometry of the configuration
and the inherent structure of the equations in~(\ref{eqDziobek1}).
\end{enumerate}
\end{remark}

\subsection{The Albouy-Chenciner equations}

For the remainder of the paper (excluding the special cases discussed in Section~\ref{Section:SpecialCases}), 
we will assume the equality of vortex strengths specified in Lemma~\ref{lemma:symm}.
Specifically, we set $\Gamma_1 = \Gamma_2 = 1$ and $\Gamma_3 = \Gamma_4 = m$,
treating $m$ as a real parameter.  Without loss of generality, we restrict to the case where $m \in (-1,1]$.
The choice $m=-1$ implies that $\Gamma = 0$, a special case examined in Section~\ref{gamma0_sec}.

When $\Gamma \neq 0$, the equations for a relative equilibrium can be written in polynomial form as
\[
f_{ij}=\sum_{k=1}^n\Gamma_k[S_{ik}(r_{jk}^2-r_{ik}^2-r_{ij}^2)+S_{jk}(r_{ik}^2-r_{jk}^2-r_{ij}^2)]=0,
\]
where $1\leq i<j\leq n$ and the $S_{ij}$ are given by
\[
S_{ij}=\frac{1}{r_{ij}^2}+\lambda'\quad (i\neq j), \quad\quad S_{ii}=0.
\]
These very useful equations are due to Albouy and Chenciner~\cite{albouy_probleme_1997} 
(see also \cite{hampton_finiteness_2005} for a nice derivation).  They form a polynomial system in the $r_{ij}$
variables after clearing the denominators in the $S_{ij}$ terms. 

Since any relative equilibrium may be rescaled, we will impose the normalization
$\lambda' = -1$ unless otherwise stated. This usually can be assumed without loss of generality.   
However, as explained in Section~\ref{sec:distancesascoordinates}, for $m < 0$ the normalization $\lambda'=1$ also
needs to be considered.  This case is discussed in Sections \ref{sec:kites-lampos} and~\ref{rhombus_sec}.
We denote the complete set of polynomial equations determined by $f_{ij} = 0$ as $\mathcal{F}$.

From the Albouy-Chenciner equations we can derive a  more restrictive set of equations, namely
\[
g_{ij}=\sum_{k=1}^n\Gamma_kS_{ik}(r_{jk}^2-r_{ik}^2-r_{ij}^2)=0.
\]
Since $g_{ij}\neq g_{ji}$, these give 12 distinct equations. We denote the complete set of polynomial equations determined by
$g_{ij} = 0$ as $\mathcal{G}$, and we will refer to them as the unsymmetrized Albouy-Chenciner equations.

The solutions of the Albouy-Chenciner equation give configurations of all dimensions, but, in the four-vortex problem,  
they can be specialized to the strictly planar case by adding the three Dziobek equations in~(\ref{eq:Dzio}).
Introducing the variables $s_{ij}=r_{ij}^2$, these equations can be written as
\[
h_{ijkl}=(s_{ij}^{-1} + \lambda')(s_{kl}^{-1} + \lambda')-(s_{ik}^{-1} + \lambda')(s_{jl}^{-1} + \lambda') = 0,
\]
where $i,j,k$ and $l$ are all distinct indeces. We denote the set of Dziobek equations (with denominators
cleared) as $\mathcal{H}$.

\section{Algebraic Techniques}
\label{section:algtechs}

In this section we briefly describe three of our main algebraic techniques for analyzing solutions to our problem:
elimination theory using Gr\"{o}bner bases, a useful lemma to distinguish when the roots of a quartic 
are real or complex, and Mobius transformations.

\subsection{Gr\"obner bases and elimination theory}

We mention briefly some elements from elimination theory and the theory of Gr\"obner bases that will prove useful
in our analysis.  For a more detailed exposition see~\cite{cox_ideals_2007}.

Let $K$ be a field and consider the polynomial ring $K[x_1,\ldots,x_n]$ of polynomials in $n$ variables over $K$. 
Let $f_1, \ldots f_l$ be $l$ polynomials in $K[x_1,\ldots,x_n]$ and consider the ideal $I=\langle f_1,\ldots, f_l\rangle$ generated by these polynomials.
Denote ${\bf V}(I)$ as the affine variety of $I$.

\begin{definition}
 An admissible order $>$ on $K[x_1,\ldots,x_n]$ is  called a {\em $k-$elimination order} if 
 \[
 x_1^{a_1}\ldots x_n^{a_n}>x_{k+1}^{b_{k+1}}\ldots x_n^{b_n}
 \]
 when $a_{i_0}>0$ for some $i_0\in\{1,\ldots,k\}$.
\end{definition}

A lexicographical (lex) order is an example of $k-$elimination order for all $k$.

\begin{definition}
 The {\em $k$-th elimination ideal} $I_k$ is the ideal of $K[x_{k+1},\ldots, x_n]$ defined by
 \[
 I_k=I\cap K[x_{k+1},\ldots,x_n]
 \]
\end{definition}

Gr\"obner bases provide a systematic way of finding elements of $I_k$ using the proper term ordering.

\begin{theorem}{\bf (The Elimination Theorem)}
 Let $I$ be an ideal of $K[x_1,\ldots,x_n]$ and let $G$ be a Gr\"obner basis of $I$ with respect to a $k-$elimination order for $k$ where $0\leq k\leq n$. Then the set 
 \[
 G_k=G\cap K[x_{k+1},\ldots,x_n]
 \]
 is a Gr\"obner basis of the $k$-th elimination ideal $I_k$.
\end{theorem}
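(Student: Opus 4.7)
The plan is to verify the two axioms defining a Gr\"obner basis for $I_k$: first, that $G_k \subseteq I_k$ with $\langle G_k \rangle = I_k$, and second, that $\langle \mathrm{LT}(G_k) \rangle = \langle \mathrm{LT}(I_k) \rangle$ in $K[x_{k+1}, \ldots, x_n]$, where both ideals of leading terms are computed with respect to the induced monomial order on the smaller ring. The containment $G_k \subseteq I_k$ is immediate: any element of $G$ lying in $K[x_{k+1}, \ldots, x_n]$ automatically lies in $I \cap K[x_{k+1}, \ldots, x_n]$.

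The heart of the argument, which I would carry out first, is the leading-term equality. Given any nonzero $f \in I_k$, I would use the fact that $f \in I$ together with the Gr\"obner basis property of $G$ in $I$ to produce some $g \in G$ with $\mathrm{LT}(g) \mid \mathrm{LT}(f)$. Since $f \in K[x_{k+1}, \ldots, x_n]$, its leading monomial involves none of $x_1, \ldots, x_k$, so neither does $\mathrm{LT}(g)$. At this point I would invoke the defining property of a $k$-elimination order: a polynomial whose leading monomial is free of $x_1, \ldots, x_k$ must have \emph{all} of its monomials free of $x_1, \ldots, x_k$. This forces $g \in G_k$, so $\mathrm{LT}(g) \in \mathrm{LT}(G_k)$ divides $\mathrm{LT}(f)$ as desired.

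Once the leading-term equality is in hand, I would deduce the generating statement by a standard division-algorithm argument inside the smaller ring $K[x_{k+1}, \ldots, x_n]$. Dividing an arbitrary $f \in I_k$ by $G_k$ yields an expression $f = \sum_j q_j g_j + r$ with the remainder $r$ having no monomial divisible by any $\mathrm{LT}(g_j)$. Since $r = f - \sum_j q_j g_j \in I_k$ and every leading monomial of a nonzero element of $I_k$ is divisible by some $\mathrm{LT}(g_j)$ by the previous step, the remainder must vanish, giving $f \in \langle G_k \rangle$.

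The only step I expect to require more than bookkeeping is the transition from ``$\mathrm{LT}(g)$ involves only $x_{k+1}, \ldots, x_n$'' to ``$g$ itself involves only $x_{k+1}, \ldots, x_n$.'' This is precisely the property that singles out the $k$-elimination orders from generic admissible orders, and it is why the theorem has to be stated for this restricted class; were it absent, some $g \in G$ might contribute to $\mathrm{LT}(I_k)$ without lying in $G_k$, and the conclusion would break. The verification itself is short: if some monomial of $g$ contained an $x_i$ with $i \le k$, then by the definition of $k$-elimination order that monomial would exceed $\mathrm{LT}(g)$, contradicting the choice of leading term. All other ingredients of the proof are standard consequences of the Gr\"obner basis axioms and the multivariate division algorithm.
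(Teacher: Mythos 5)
Your proof is correct and complete: the passage from ``$\mathrm{LT}(g)$ involves only $x_{k+1},\ldots,x_n$'' to ``$g\in K[x_{k+1},\ldots,x_n]$'' is exactly the right use of the $k$-elimination property, and the division-algorithm argument correctly upgrades the leading-term equality to the statement that $G_k$ generates $I_k$. The paper itself states this theorem without proof, deferring to the standard reference of Cox, Little and O'Shea, and your argument is precisely the standard proof found there, so there is nothing to reconcile between the two.
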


Gr\"obner bases also provide a method of determining when an element of 
 ${\bf V}(I_k)$ (a partial solution) can be extended to a full solution in ${\bf V}(I)$.
 This can be achieved by repeatedly applying the following theorem.

\begin{theorem}{\bf (The Extension Theorem)}
Let $K$ be an algebraically closed field and let $I$ be some ideal in $K[x_1, \ldots, x_n]$.
Let $G_{k-1}$ be a lex Gr\"obner basis for the elimination ideal $I_{k-1}$ and write each
polynomial in $G_{k-1}$ as
$$
g_i = h_i(x_{k+1}, \ldots x_n) x_{k}^{N_i} + \mbox{terms where $x_{k}$ has degree $< N_i$},
$$
where $N_i > 0$ and $h_i$ is nonzero.
Suppose that $(a_{k+1}, \ldots, a_n)$ is a partial solution in ${\bf V}(I_k)$ and that
$h_i(a_{k+1}, \ldots, a_n) \neq 0$ for some index $i$.  Then there exists $a_k \in K$
such that $(a_k, a_{k+1}, \ldots, a_n) \in {\bf V}(I_{k-1})$.
\end{theorem}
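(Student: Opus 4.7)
The plan is to reduce the extension question to finding a root of a single univariate polynomial in $K[x_k]$. Substituting the partial solution $(a_{k+1}, \ldots, a_n)$ into each $g_i \in G_{k-1}$ produces specialized polynomials $\bar{g}_i(x_k) := g_i(x_k, a_{k+1}, \ldots, a_n) \in K[x_k]$. By hypothesis, at least one $\bar{g}_{i_0}$ has positive degree $N_{i_0}$, since $h_{i_0}(a_{k+1}, \ldots, a_n) \ne 0$. Let $J \subset K[x_k]$ be the ideal generated by the $\bar{g}_i$; because $K[x_k]$ is a PID, $J = (p(x_k))$ with $p \ne 0$.

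The heart of the argument is to show $J$ is proper, i.e., that $p$ is non-constant. For any other $g_j \in G_{k-1}$ of positive $x_k$-degree, the resultant $R_j := \mathrm{Res}(g_{i_0}, g_j; x_k)$ admits a Sylvester-type Bezout expression $R_j = A\,g_{i_0} + B\,g_j$, so it lies in $K[x_{k+1}, \ldots, x_n] \cap I_{k-1} = I_k$ and therefore vanishes at $(a_{k+1}, \ldots, a_n) \in \V(I_k)$. The specialization formula for resultants, applicable precisely because $h_{i_0}$ survives substitution, yields $R_j(a_{k+1}, \ldots, a_n) = h_{i_0}(a_{k+1}, \ldots, a_n)^{\nu}\,\mathrm{Res}(\bar{g}_{i_0}, \bar{g}_j; x_k)$, so $\mathrm{Res}(\bar{g}_{i_0}, \bar{g}_j; x_k) = 0$ and $\bar{g}_{i_0}$ shares a common root with $\bar{g}_j$ in $K$. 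An induction on the number of generators—in which the Gr\"obner basis property of $G_{k-1}$ guarantees that $S$-polynomial reductions impose compatibility between pairwise common roots—promotes this to a single $a_k \in K$ that is a root of all the $\bar{g}_i$.

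Finally, because $K$ is algebraically closed, pick such a root $a_k$; every $g_i \in G_{k-1}$ vanishes at $(a_k, a_{k+1}, \ldots, a_n)$, and since $G_{k-1}$ generates $I_{k-1}$, this point lies in $\V(I_{k-1})$ as required.

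The main obstacle is the propagation step in the middle paragraph. Pairwise common roots of specialized polynomials do not in general guarantee a single root common to the entire system, and it is here that the full strength of the Gr\"obner basis hypothesis—rather than mere generation of $I_{k-1}$—is unavoidable: the normal-form algorithm afforded by $G_{k-1}$ expresses every element of $I_{k-1}$ as a controlled combination of the $g_i$ with bounded leading behavior in $x_k$, which is precisely what is needed for the chosen root to annihilate all generators simultaneously and for the extension to go through.
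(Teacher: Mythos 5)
The paper itself does not prove this statement: it is quoted as standard background (the Extension Theorem of elimination theory, cited from \cite{cox_ideals_2007}), so your attempt has to be judged against the textbook argument. Your setup and first step are sound: reducing the problem to showing the specialized ideal $J = \langle \bar{g}_1, \ldots, \bar{g}_s \rangle \subset K[x_k]$ is proper is the right reformulation, and the pairwise resultant computation is correct --- $\mathrm{Res}(g_{i_0}, g_j; x_k)$ is an $K[x_k,\ldots,x_n]$-combination of $g_{i_0}$ and $g_j$, hence lies in $I_{k-1} \cap K[x_{k+1},\ldots,x_n] = I_k$, vanishes at the partial solution, and specializes well precisely because $h_{i_0}(a_{k+1},\ldots,a_n) \neq 0$. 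But the middle step you flag is a genuine gap, not a technicality. Pairwise nonconstant gcds simply do not imply $1 \notin J$: the specializations $\bar{g}_1 = x(x-1)$, $\bar{g}_2 = x(x-2)$, $\bar{g}_3 = (x-1)(x-2)$ pass every pairwise resultant test yet generate the unit ideal, and nothing in your sketch rules out this configuration. The sentence ``S-polynomial reductions impose compatibility between pairwise common roots'' names no mechanism; S-polynomial reduction controls leading terms of $I_{k-1}$ before specialization and does not obviously constrain which roots of $\bar{g}_{i_0}$ the various $\bar{g}_j$ share.

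The classical repair replaces the family of pairwise resultants by a single resultant with generic coefficients: adjoin indeterminates $u_j$ and set $R = \mathrm{Res}\bigl(g_{i_0}, \sum_{j \neq i_0} u_j g_j; x_k\bigr)$. Each coefficient of $R$, viewed as a polynomial in the $u_j$, lies in $I_k$, so evaluating at the partial solution gives $\mathrm{Res}\bigl(\bar{g}_{i_0}, \sum_j u_j \bar{g}_j; x_k\bigr) = 0$ identically in $u$ (again using $h_{i_0}(a_{k+1},\ldots,a_n) \neq 0$ to control the specialization). Hence $\bar{g}_{i_0}$ and the generic combination have a nonconstant common factor over $K(u)$; since the roots of $\bar{g}_{i_0}$ lie in $K$ and are independent of $u$, some single root $a_k$ satisfies $\sum_j u_j \bar{g}_j(a_k) = 0$ identically, i.e.\ $\bar{g}_j(a_k) = 0$ for every $j$, which is exactly the simultaneity your induction was meant to deliver. (An alternative route: localize at $h_{i_0}$, so that $g_{i_0}$ becomes monic in $x_k$ up to a unit, making $K[x_{k+1},\ldots,x_n]_{h_{i_0}} \rightarrow (K[x_k,\ldots,x_n]/I_{k-1})_{h_{i_0}}$ integral, and apply lying-over.) Finally, your closing diagnosis is off: the Extension Theorem holds for an \emph{arbitrary} finite generating set of $I_{k-1}$ --- that is how it is stated in \cite{cox_ideals_2007} --- so the Gr\"obner basis hypothesis is not what closes the gap. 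In the paper's formulation the lex Gr\"obner basis enters only through the Elimination Theorem, as a convenient generating set of $I_{k-1}$ whose intersection with $K[x_{k+1},\ldots,x_n]$ is known to generate $I_k$.
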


\subsection{A useful lemma for quartic polynomials}

The analysis of the collinear case, as well as some strictly planar cases, frequently involves solving a quartic
equation whose coefficients are polynomials in~$m$.   We state here some useful results about quartic equations.
Consider the general quartic polynomial $\zeta(x) =  ax^4+bx^3+cx^2+dx+e$, with coefficients
in $\mathbb{R}$ and $a \neq 0$.
We first remove the cubic term of $\zeta$ by the change of variables $x=y-\frac{b}{4a}$.  This produces the polynomial
$a \, \eta(y)$, where $\eta$ is the {\em shifted quartic} $\eta(y) = y^4+py^2+qy+r$.  The {\em discriminant} $\Delta$ of any polynomial is a positive constant times
the square of the product of all possible differences of roots.  The discriminant of $\zeta$ is equivalent to
$a^6$ times the discriminant of $\eta$.
For a general quartic, it is straight-forward to check that if
$\Delta > 0$, then the roots are either all real or all complex (two pairs of complex conjugates).
If $\Delta < 0$, then there are two real roots and two complex roots.
The roots are repeated if and only if $\Delta = 0$.

Let $y_1, y_2, y_3$ and $y_4$ be the four roots of $\eta(y)$.  By construction, 
$y_1 + y_2 + y_3 + y_4 = 0$.  It follows that  
$$
z_1 = -(y_1+y_2)^2, \quad z_2 = -(y_1+y_3)^2, \quad z_3 = -(y_1+y_4)^2
$$
are the roots of the {\em resolvent cubic} $\xi(z) = z^3 - 2pz^2 + (p^2-4r) z + q^2$.
The discriminant of~$\xi$ is equivalent to the discriminant of~$\eta$.
When the discriminant is positive, the resolvent cubic is particularly useful for determining whether the roots
of $\eta(y)$ are all real or all complex.  Specifically, if the four roots of $\eta(y)$ are real,
then $z_1, z_2$, and $z_3$  must be negative and real (or if $q=0$, then $z_i = 0$
for precisely one $i$ while the remaining $z_i$'s are negative).  On the other hand,
if the four roots of $\eta(y)$ are complex, then one of the roots of $\xi(z)$ is less than
or equal to zero, but two of the roots of $\xi(z)$ must be positive and real.  
These facts can easily be translated into conditions on the
coefficients of the resolvent cubic.

\begin{lemma}
Suppose that the discriminant of a quartic polynomial $\zeta(x)$ is positive and
let $\eta(y) = y^4+py^2+qy+r$ be the shifted quartic related to $\zeta$.
Then, the four distinct roots of $\zeta$ are real if and only if
$p < 0$ and $p^2 - 4r > 0$.
\label{lemma:quartic-roots}
\end{lemma}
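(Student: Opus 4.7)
The plan is to base the argument on the resolvent cubic $\xi(z) = z^3 - 2pz^2 + (p^2-4r)z + q^2$ introduced in the paragraph preceding the lemma. Writing its roots as $z_1, z_2, z_3$ (the three values of $-(y_1+y_j)^2$ for $j=2,3,4$), Vieta's formulas give
$$
z_1+z_2+z_3 = 2p, \qquad z_1z_2 + z_1z_3 + z_2z_3 = p^2-4r, \qquad z_1z_2z_3 = -q^2.
$$
Since the discriminant is positive, the four roots $y_1, \ldots, y_4$ of $\eta$ are distinct and are either all real or split into two complex-conjugate pairs. I will handle these two alternatives in turn, reading the sign conditions off the Vieta relations.

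For the forward direction, assume the $y_i$ are real and distinct. Each $z_i = -(y_1+y_j)^2 \le 0$, and a given $z_i$ vanishes precisely when some $y_1 + y_j = 0$, which happens exactly when $q = 0$. If $q \ne 0$, all three $z_i$ are strictly negative, so $2p = \sum_i z_i < 0$ and $p^2 - 4r = \sum_{i<j} z_iz_j > 0$ (a sum of three positive numbers). If $q = 0$, exactly one $z_i$ vanishes while the other two are strictly negative, yielding again $p < 0$ and $p^2-4r > 0$ (a single positive product). Either way, $p<0$ and $p^2-4r>0$.

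The reverse direction is the main obstacle, and my plan is to establish it by contrapositive: if the $y_i$ form two complex-conjugate pairs, then $p < 0$ and $p^2-4r > 0$ cannot both hold. The constraint $y_1+y_2+y_3+y_4 = 0$ forces the real parts of the two pairs to be opposites, so I may write the roots as $\alpha \pm i\beta$ and $-\alpha \pm i\delta$ with $\beta, \delta \ne 0$ (distinctness of the roots rules out the degenerate configurations). A direct computation then gives
$$
z_1 = -4\alpha^2, \qquad z_2 = (\beta+\delta)^2, \qquad z_3 = (\beta-\delta)^2,
$$
so that $p = -2\alpha^2 + \beta^2 + \delta^2$ and $p^2 - 4r = (\beta^2-\delta^2)^2 - 8\alpha^2(\beta^2+\delta^2)$. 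Assuming $p < 0$ forces $\beta^2+\delta^2 < 2\alpha^2$, and then the elementary chain
$$
(\beta^2-\delta^2)^2 \;\le\; (\beta^2+\delta^2)^2 \;<\; 2\alpha^2(\beta^2+\delta^2)
$$
yields $p^2-4r < 2\alpha^2(\beta^2+\delta^2) - 8\alpha^2(\beta^2+\delta^2) = -6\alpha^2(\beta^2+\delta^2) \le 0$, contradicting $p^2-4r > 0$. This short inequality is where the essential work lies; everything else is bookkeeping with Vieta's formulas applied to the resolvent cubic.
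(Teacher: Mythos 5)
Your proof is correct, and while the forward direction is essentially the paper's (both read $2p = z_1+z_2+z_3 < 0$ and $p^2-4r = z_1z_2+z_1z_3+z_2z_3 > 0$ off Vieta's formulas from the nonpositivity of the resolvent roots $z_i = -(y_1+y_j)^2$, with the same care about $q=0$), your reverse direction takes a genuinely different route. The paper argues via Descartes' rule of signs: under $p<0$ and $p^2-4r>0$, the coefficients of $\xi(z) = z^3 - 2pz^2 + (p^2-4r)z + q^2$ exhibit no sign change, so $\xi$ has no positive real root, and this is played off against the fact --- asserted but not proved in the paragraph preceding the lemma --- that two complex-conjugate pairs of roots of $\eta$ force $\xi$ to have positive real roots. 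You instead parametrize the complex case explicitly as $\alpha \pm i\beta$, $-\alpha \pm i\delta$ (the zero root sum forcing opposite real parts), compute $z_1 = -4\alpha^2$, $z_2 = (\beta+\delta)^2$, $z_3 = (\beta-\delta)^2$, and kill both sign conditions at once with an elementary inequality; your computations check out. What this buys: your argument is self-contained, since it effectively \emph{proves} the auxiliary fact the paper only asserts, and it handles uniformly the borderline configuration $\beta = \pm\delta$ (where $q = 0$ and one of $z_2, z_3$ vanishes, so that strictly speaking only \emph{one} root of $\xi$ is positive, a case the paper's assertion that ``two of the roots of $\xi(z)$ must be positive and real'' glosses over --- though the paper's Descartes argument itself survives this, needing only one positive root for the contradiction). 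You even get the quantitative bound $p^2-4r < -6\alpha^2(\beta^2+\delta^2)$. What the paper's route buys is brevity: one application of Descartes' rule with no parametrization of root configurations. One small point worth making explicit in your write-up: the strict inequality $(\beta^2+\delta^2)^2 < 2\alpha^2(\beta^2+\delta^2)$ uses $\beta^2+\delta^2 > 0$, which holds because $\beta,\delta \neq 0$; and $\alpha \neq 0$ then follows from $p<0$, so the final bound is indeed strictly negative.
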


\begin{proof}
Since the discriminant is positive, the roots of $\zeta$ are distinct and either all real
or all complex.  First suppose that the four roots of $\zeta(x)$ are real.  Since $\eta(y)$ is obtained through
a simple translation, it follows that the four roots of $\eta(y)$ are distinct and real.  
Then, the three real roots $z_1, z_2, z_3$ of the resolvent cubic $\xi(z)$ are negative with possibly
one root equal to zero (if $q=0$).  This in turn
implies that 
\begin{eqnarray*}
z_1 + z_2 + z_3 &=&  2p \; < \; 0 \quad \mbox{ and } \\
z_1 z_2 + z_1 z_3 + z_2 z_3 & = & p^2 - 4r  \; > \; 0.
\end{eqnarray*}

In the other direction, suppose that $p < 0$ and $p^2 - 4r > 0$.  This implies that all the coefficients
of $\xi(z)$ are positive, and by Descartes' rule of signs, $\xi(z)$ has no positive, real roots.  It follows
that the four roots of $\eta(y)$ cannot be complex.  This shows that the roots of $\zeta(x)$ are
real.
\end{proof}

\subsection{M\"obius transformations}

M\"obius transformations are a key tool for isolating
roots of polynomials since they allow for a dramatic reduction on the number of variations
of signs in the coefficients of a polynomial~\cite{Barros_set_2011}.
Let $\mathcal{P}(x_1,\ldots,x_n)$  be  a multivariate polynomial in $n$ variables.  
We use M\"obius transformations of the form  
\[
x_i=\frac{k_i^{(2)}y_i+k_i^{(1)}}{y_i+1}
\]
where $k_i^{(1)}<k_i^{(2)}$ and $i=1,\ldots n$, to obtain changes of variables for  $\mathcal{P}(x_1,\ldots,x_n)$.
The numerator of the rational function thus obtained is a multivariate  polynomial  in the variables $y_1,\ldots,y_n$ 
that restricted to the set $[0,\infty)\times\ldots\times[0,\infty)$ has the same number of roots as $\mathcal{P}(x_1,\ldots,x_n)$
 restricted to $[k_1^{(1)},k_1^{(2)})\times\ldots\times [k_n^{(1)},k_n^{(2)})$. 

 M\"obius transformations can be used to determine partitions of the
 space into blocks so that, in each block, the coefficients of the polynomials
under consideration have a very simple behavior with respect to variations
of signs. Therefore, we can use such transformations, together with Descartes' rule of signs, to determine whether 
a polynomial has a root or does not have a root in a given region of space.

\section{Special Cases}
\label{Section:SpecialCases}

In this section, we use some results from~\cite{hampton_finiteness_2009} to examine two special cases:
equilibrium solutions and vanishing total vorticity ($\Gamma = 0$).

\subsection{Symmetric equilibria} \label{symmetric_equilibria}

Equilibria are solutions to equation~(\ref{eqcc}) with $\lambda = 0$.
If we choose coordinates so that the fourth vortex is at the origin, and the third vortex is at $(1,0)$, 
then the other two vortices in a four-vortex equilibrium must be located at:
\begin{equation}\label{equilibria_locs}
\begin{split}
x_1 = \frac{1}{2 (\Gamma_2 + \Gamma_3 + \Gamma_4)} \left (2\Gamma_4 + \Gamma_2 , \pm \sqrt{3}\, \Gamma_2 \right )\\
x_2 = \frac{1}{2 (\Gamma_1 + \Gamma_3 + \Gamma_4)} \left (2\Gamma_4 + \Gamma_1 , \mp \sqrt{3}\, \Gamma_1 \right )
\end{split}
\end{equation}
where the sign chosen for $x_2$ is the opposite of that for $x_1$ (see~\cite{hampton_finiteness_2009} for details).

According to equation~(\ref{eq:lambda}), for an equilibrium to exist the vorticities must satisfy $L = 0$. 
This is impossible if all the vorticities are equal.  If three vorticities are equal, for example with $\Gamma_1 = \Gamma_2 = \Gamma_3 = m$, then $\Gamma_4 = -m$.  
We can rescale so that $m=1$ without loss of generality.  Using~(\ref{equilibria_locs}) it is easy to see that the only equilibria are equilateral triangles with the opposing vortex located in the center.

Now consider equilibria when $\Gamma_1 = \Gamma_2 = 1$ and $\Gamma_3 = \Gamma_4 = m$.  Then,
$L=0$ implies $m= -2 \pm \sqrt{3}$.  Since we are restricting to $m \in (-1,1]$, we have $m = -2 + \sqrt{3} \approx -0.2679$.  
In this case, the equilibria are rhombi with vortices 1 and 2 opposite each other, and $\displaystyle \frac{r_{34}}{r_{12}} = 2 - \sqrt{3} = -m$.  
These rhombi are members of one of the families of rhombi described in Section~\ref{rhombus_sec}.

Suppose we instead specify the symmetry of the configuration.  From~(\ref{equilibria_locs}) it is immediate that there are no 
collinear equilibria for nonzero vorticities.  There are also no isosceles trapezoid equilibria.  To see this, 
consider a trapezoid with $r_{14} = r_{23}$ and $r_{13} = r_{24}$.  Then~(\ref{equilibria_locs}) implies that
$$
\frac{\Gamma_2}{\Gamma_2 + \Gamma_3 + \Gamma_4} = \frac{-\Gamma_1}{\Gamma_1 + \Gamma_3 + \Gamma_4},
$$
but there are no real nonzero vorticities satisfying this equation and $L=0$.

Finally, there is the case in which the configuration is a kite (either concave or convex).  We choose vortices 1 and 2 to be on an axis of symmetry, 
so $r_{13} = r_{14}$ and $r_{23} = r_{24}$. Then equation~(\ref{equilibria_locs}) implies that $\Gamma_3 = \Gamma_4$.
If we choose any $\Gamma_2 \neq -2\Gamma_4$ with
$$
\Gamma_2 = -\frac{2 \Gamma_{1} \Gamma_{4} + \Gamma_{4}^{2}}{\Gamma_{1} + 2 \Gamma_{4}},
$$
then $L=0$ is satisfied and there is a kite equilibrium given by~(\ref{equilibria_locs}).  If we fix $\Gamma_4 = 1$, 
then as $\Gamma_2 \rightarrow -2$, $\Gamma_1 \rightarrow \infty$ and the 
configuration of vortices 2, 3, and 4 approaches an equilateral triangle while the ordinate of vortex~1 heads off to $\pm \infty$.

\subsection{Vanishing total vorticity}\label{gamma0_sec}

When the total vorticity $\Gamma = 0$, the analysis of stationary vortex configurations (i.e., those that do not change their shape) 
requires equations adapted to this special case.  For four vorticities we simply apply some of the results from~\cite{hampton_finiteness_2009}.

If three of the vorticities are equal and $\Gamma = 0$, we can consider without loss of generality the case 
$\Gamma_1 = \Gamma_2 = \Gamma_3 = 1$ and $\Gamma_4 = -3$.  It is interesting that there are six asymmetric 
configurations which rigidly translate (three pairs of configurations, within a pair the configurations are reflections of one another), 
but no symmetric rigidly translating solutions.  These configurations can be obtained directly from the equations in \cite{hampton_finiteness_2009}.  

For relative equilibria we use the equations
$$
S_1 = S_2 = S_3 = S_4 = s_0
$$
and
\begin{equation}\label{eq_celli}
\frac{1}{s_{12}}+\frac{1}{s_{34}} = \frac{1}{s_{13}}+\frac{1}{s_{24}} = \frac{1}{s_{14}}+\frac{1}{s_{23}},
\end{equation}
where 
$$
S_i = \Gamma_j s_{ij} + \Gamma_k s_{ik} +  \Gamma_l s_{il}, \ \ \ \ \{i,j,k,l\} = \{1,2,3,4\}
$$
and $s_0$ is an auxiliary variable.  Recall that $s_{ij} = r_{ij}^2$.  We clear denominators in the equations from (\ref{eq_celli}) to get a polynomial system.  
Using a Gr\"{o}bner basis to eliminate $s_0$, we find that for $\Gamma_1 = \Gamma_2 = \Gamma_3 = 1$ and $\Gamma_4 = -3$ 
there are two types of symmetric relative equilibria.  The first is the equilateral triangle with vortex 4 at its center.  
The second type is a concave kite with the three equal vorticities on the exterior isosceles triangle.  If we scale the 
exterior triangle so that its longest side is length $1$, then the base is length $\sqrt{\sqrt{3} - 5}$, and the other sides of the 
interior triangle containing vortex $4$ are length $\sqrt{-1 + \sqrt{3}}$.

If two pairs of vorticities are equal and $\Gamma = 0$, then we have $\Gamma_1 = \Gamma_2 = 1$ and $\Gamma_3 = \Gamma_4 = -1$.  
In this case there are no rigidly translating solutions.  This is somewhat surprising since each pair of opposing vortices would rigidly translate if 
unperturbed.  Using the same equations as we did for the case where three vorticities are equal, we find that there are two relative equilibria, 
each of which forms a rhombus.  These appear in the two families of rhombi described in Section~\ref{rhombus_sec}, 
with $\displaystyle \frac{s_{34}}{s_{12}} = 3 \pm 2\sqrt{2}$.

\section{Collinear Relative Equilibria}
\label{section:collinear}

Collinear relative equilibria of the four-vortex problem can be studied directly from equation~(\ref{eqcc}) since in this case it reduces to 
\begin{equation}
- \lambda (x_i - c) \; = \;  \sum_{j \neq i}^n  \frac{\Gamma_j}{x_j -x _i}  \quad \mbox{ for each } i \in \{1, 2, 3, 4\},
\label{eq:coll-orig}
\end{equation}
where $c, x_i \in \mathbb{R}$ rather than $\mathbb{R}^2$.  
Clearing denominators from these equations yields a polynomial system.  
Rather than fix $\lambda$ or $c$, we use the homogeneity and translation invariance of the system and set $x_3 = -1$ and $x_4 = 1$.  
As specified earlier, we set $\Gamma_1 = \Gamma_2 = 1$ and $\Gamma_3 = \Gamma_4 = m \in (-1,1]$, treating
$m$ as a parameter.

\subsection{Symmetric solutions}

\label{sec:coll-symm}

Given our setup, symmetric configurations correspond to solutions where either $x_1 = - x_2$
or $r_{12} = 2$.   Both cases are simple enough to analyze using Gr\"{o}bner bases.

\subsubsection*{Case 1: $x_1 = - x_2$}

In this case the center of vorticity $c$ is at the origin.  The solutions are the roots of an even
quartic polynomial in $x_1$, given by 
$
m \, x_1^4 - 5(m + 1) x_1^2  + 1.
$
The discriminant of this quartic changes sign at $m=0$ and there are
four real roots for $m>0$ but only two when $m < 0$.
The result is that for any $m \in (-1,1]$, there is a collinear relative equilibrium with  
\begin{equation}
x_1 = -x_2 = \pm \sqrt{\frac{5 \, m + 5 - \sqrt{25 m^{2} + 46 m + 25} }{2 m}}
\label{eq:coll-symm1}
\end{equation}
(with $x_1 = -x_2 =  \pm 1/\sqrt{5}$ when  $m=0$).  This solution has 
vortices $x_1$ and $x_2$ symmetrically located between vortices $x_3$ and $x_4$,
and as $m \rightarrow -1^+$, the inner vortices approach the outer vortices
(with collision at $m=-1$.)
For $m \in (0,1]$, $|x_1|$ decreases monotonically in $m$ from
$1/\sqrt{5} \approx 0.4472$ to $\sqrt{3} - \sqrt{2} \approx 0.3178$, while 
for $m \in (-1,0)$, $|x_1|$ decreases monotonically in $m$ from
$1$ to $1/\sqrt{5}$.

There is an additional collinear relative equilibrium if $m \in (0,1]$ given by
$$
x_1 = -x_2 = \pm \sqrt{\frac{5 \, m + 5 + \sqrt{25 m^{2} + 46 m + 25} }{2 m}}.
$$
In this case, the vortices $x_1$ and $x_2$ are symmetrically located outside 
vortices $x_3$ and $x_4$, and their positions approach $\pm \infty$
as $m \rightarrow 0^+$.  The value of $|x_1|$ decreases monotonically in $m$ from
$\infty$ to $\sqrt{3} + \sqrt{2} \approx 3.1462$.

\subsubsection*{Case 2: $r_{12} = 2$}

In this case the center of vorticity $c$ is not necessarily located at the origin; however,
the configuration will be symmetric about some fixed point equidistant from both the inner and outer pairs
of vortices.  Since $|x_1 - x_2| = 2$ and $x_1 = -x_2$ together imply $|x_2| = 1$ (collision), the two cases 
are distinct.  If $m \neq 1$, a solution for case 2 will have the inner and outer pair of vortices having different circulations.
It turns out that this case is impossible when $m \neq 1$.  

To see this, we consider the polynomials obtained from system~(\ref{eq:coll-orig}) along with 
$x_1 - x_2 - 2$ and $u(x_1 + x_2) - 1$ (to eliminate solutions from Case~1).
Computing a lex Gr\"{o}bner basis for this set of polynomials   
quickly yields $m=1$.  The same result is obtained when using
$x_1 - x_2 + 2$.  Moreover, in each computation, a 
fourth-degree polynomial in $x_2$ is obtained
that provides the exact solution for the special case $m=1$.

In sum, there are no solutions with $r_{12} = 2$ unless $m=1$.
When $m=1$, there are 8 solutions with $r_{12}=2$ given by $(x_1,x_2) =$
\begin{equation}
(1 \pm \sqrt{2} \pm \sqrt{6}, -1 \pm \sqrt{2} \pm \sqrt{6})
\quad \mbox{and } \quad
(-1 \pm \sqrt{2} \pm \sqrt{6}, 1 \pm \sqrt{2} \pm \sqrt{6}),
\label{eq:coll-m=1}
\end{equation}
where the signs in a particular ordered pair are chosen to have the same pattern in
each coordinate (e.g., $(1 + \sqrt{2} - \sqrt{6}, -1 + \sqrt{2} - \sqrt{6})$).
It is straight-forward to check that each of these 8 solutions is a scaled
version of a solution found in Case 1.  Specifically, for each of the solutions
listed in~(\ref{eq:coll-m=1}) as well as the solutions obtained in Case~1 for $m=1$, 
there is a rescaling, translation and relabeling of the vortices that maps the solution onto 
\begin{equation}
x_3 = -1, \: x_2 = -\sqrt{3} + \sqrt{2}, \: x_1 = \sqrt{3} - \sqrt{2}, \: x_4 = 1,
\label{eq:coll-Hermite}
\end{equation}
a solution obtained from equation~(\ref{eq:coll-symm1}) when $m=1$.
We note that solution~(\ref{eq:coll-Hermite}) can be rescaled to coincide with the roots
of the Hermite polynomial $H_4$, as expected (see \cite{aref_point-vortex_2007}).

\subsection{Asymmetric solutions}

To locate any asymmetric solutions, we introduce the variables $u$ and $v$ along with the equations
$$
u(x_1 + x_2) - 1 \quad \mbox{ and } \quad 
v(x_1 - x_2)  - 1.
$$
Adding these two equations to the original polynomial system obtained
from~(\ref{eq:coll-orig}), we compute
a Gr\"{o}bner basis $G_{{\rm col}}$ with respect to the lex order
where $c > \lambda > u > v > x_1 > x_2 > m.$
This basis has 15 elements, the first of which is
an even, 8th-degree polynomial in $x_2$ with coefficients in $m$,
and a basis for the elimination ideal $I_5 = I \cap \mathbb{C}[x_2,m]$.
Introducing the variable $w = x_2^2$, this polynomial is given by
$$
\begin{array}{cl}
\zeta(w) = & m^2 (m+2) (1+2m)^2 \, w^4  - 4m (15m^4+61m^3+91m^2+61m+15) \, w^3 \\[0.05in]
& + (300m^5 + 1508m^4 + 2910m^3 + 2696m^2 + 1188m + 200) \, w^2 \\[0.05in]
& - 4(5m+4)(25m^4 + 127m^3 + 231m^2 + 175m + 45) \, w + (m + 2)^3 .
\end{array}
$$
If we eliminate $x_2$ instead of $x_1$, the same polynomial is obtained with
$w = x_1^2$.  The discriminant of $\zeta$ is 
$$
1048576 (m+2)^2 (m+1)^6 (1+2m)  (25m^2+58m+25)^3  m^2 (q_u(m))^2
$$
where $q_u(m) =  2m^5 - 16m^4 - 96m^3 - 162m^2 - 108m - 25$.  
For $m \in (0,1]$, the discriminant is strictly positive.  However,
on the interval $[-1,0]$, the discriminant vanishes at six different $m$-values.
In increasing order, these values are
$$
-1, m_0 \approx -0.6833, m_1 \approx -0.6066, m_2 \approx -0.5721, -1/2, 0.
$$ 
The values $m_0$ and $m_1$ are roots of the quintic polynomial $q_u$, and
$m_2 = (-29+6 \sqrt{6})/25$ is the largest root of the quadratic $25m^2+58m+25$.
For $m=-1$, $\zeta$ has a repeated root at $1$ of multiplicity four.
At $m = -1/2$, $\zeta$ becomes a cubic polynomial with roots $-3$ and $1$ (multiplicity 2).
Thus, the values $m=-1$ and $m=-1/2$ have no physical solutions (just collisions). 
For $m=0$, the quartic reduces to a quadratic, and 
we have two possible solutions at $w = (9 \pm 4 \sqrt{5})/5$ which correspond
to limiting configurations as $m \rightarrow 0^+$.  The other three values where
the discriminant vanishes also have no physical solutions, as we show below.

\begin{lemma}
The quartic polynomial $\zeta(w)$ has precisely four positive real roots for $m \in (0,1]$ and
two positive real roots for $m \in (-1/2,0)$.  For $m \in (-1,-1/2)$, there are no positive
roots except when $m = m_0$, where there is one positive, repeated root of multiplicity two.  
\label{lemma:coll-quartic}
\end{lemma}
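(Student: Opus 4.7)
The plan is to classify the roots of $\zeta(w)$ interval-by-interval using the sign of its discriminant $\Delta$ together with Lemma~\ref{lemma:quartic-roots}, and then determine which of the real roots are positive using Descartes' rule of signs applied to $\zeta(w)$ and $\zeta(-w)$, supplemented by Vieta's formulas on the subinterval where $\Delta < 0$. Continuity of roots in $m$ within each discriminant-constant interval then extends the counts.

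First, sign-analyze $\Delta$. Since $(m+2)^2$, $(m+1)^6$, $m^2$, and $q_u(m)^2$ are nonnegative, and $(25m^2 + 58m + 25)^3$ has the same sign as its base, we obtain $\sign(\Delta) = \sign((1+2m)(25m^2+58m+25))$. The quadratic $25m^2+58m+25$ is positive exactly for $m > m_2$ in $(-1,1]$, so $\Delta > 0$ on each of $(-1,m_0)$, $(m_0,m_1)$, $(m_1,m_2)$, $(-1/2,0)$, $(0,1]$ and $\Delta < 0$ on $(m_2,-1/2)$. By Lemma~\ref{lemma:quartic-roots}, on the former intervals the four roots of $\zeta$ are either all real or form two complex-conjugate pairs, while on $(m_2,-1/2)$ exactly two are real.

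For $m\in(0,1]$, a direct check shows the coefficients of $\zeta$ have sign pattern $+,-,+,-,+$ while those of $\zeta(-w)$ are all positive; Descartes' rule then gives 0 negative roots and 0, 2, or 4 positive roots. Combined with $\Delta > 0$, there are either 4 or 0 positive roots, and the case $m=1$ is settled by the four positive values $w = x_2^2$ produced by the Hermite-type solutions around equation~(\ref{eq:coll-Hermite}); continuity across $(0,1]$ (where $\Delta$ never vanishes) pins the count at 4. For $m\in(-1/2,0)$, a similar sign check gives coefficient patterns $+,+,+,-,+$ for $\zeta(w)$ and $+,-,+,+,+$ for $\zeta(-w)$ (verified via the positivity at both endpoints of the polynomial factors $15m^4+\cdots+15$, $300m^5+\cdots+200$, and $25m^4+\cdots+45$), yielding at most 2 positive and at most 2 negative roots. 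Combined with $\Delta > 0$ and the positivity of the product of all four roots, either there are 2 positive plus 2 negative real roots, or all four are complex; an evaluation at a sample point such as $m=-1/4$ rules out the complex case throughout the interval.

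The main obstacle is $m\in(-1,-1/2)$, where the coefficient sign patterns of $\zeta$ are not uniform. On the $\Delta>0$ subintervals $(-1,m_0)$, $(m_0,m_1)$, $(m_1,m_2)$, the plan is to apply Lemma~\ref{lemma:quartic-roots} in the contrapositive: compute the shifted quartic $\eta(y)=y^4+py^2+qy+r$ obtained from $\zeta$ by depression, and show that at least one of $p\geq 0$ or $p^2-4r\leq 0$ holds, so all four roots of $\zeta$ must be non-real. Since $p$ and $p^2-4r$ are rational functions of $m$ with explicit denominators, the sign checks reduce to counting real roots of explicit polynomials in $m$ on each narrow subinterval, doable via Sturm sequences or interval arithmetic. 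On $(m_2,-1/2)$ where $\Delta<0$, Vieta gives that the product of all four roots equals $(m+2)^2/(m^2(1+2m)^2) > 0$ and the product of the complex-conjugate pair is positive, so the two real roots share a sign; to show they are negative, one either evaluates $\zeta$ at carefully chosen positive $w$-values to verify $\zeta > 0$ on $(0,\infty)$, or applies a M\"obius transformation $w=y/(1-y)$ mapping $(0,\infty)$ to $(0,1)$ and runs Descartes on the resulting polynomial, whose coefficient signs can be controlled on this small subinterval. Finally, at $m=m_0$, the vanishing $q_u(m_0)=0$ collapses $\Delta$ and the resolvent-cubic analysis of Lemma~\ref{lemma:quartic-roots} yields a real double root of $\zeta$; numerical evaluation (using $m_0\approx-0.6833$) places this double root at a positive $w$-value, accounting for the exceptional clause in the statement.
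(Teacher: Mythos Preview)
Your overall strategy coincides with the paper's: classify by sign of $\Delta$, invoke Lemma~\ref{lemma:quartic-roots} (or its contrapositive) on the $\Delta>0$ intervals, and use Descartes' rule to count positive roots. On $(0,1]$ and $(-1/2,0)$ the paper computes the depressed-quartic invariants $p$ and $p^2-4r$ explicitly and verifies $p<0$, $p^2-4r>0$ directly (via Sturm on $(-1/2,0)$), rather than arguing by continuity from a sample point; your route is valid but theirs avoids the need to certify that the discriminant does not vanish on the interval twice over.

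There is, however, a genuine gap in your treatment of $(-1,-1/2)$. The discriminant vanishes not only at $m_0$ but also at $m_1$ (both roots of $q_u$) and at $m_2$. You handle $m_0$ but say nothing about $m_1$ or $m_2$, yet the lemma asserts that $m_0$ is the \emph{only} exceptional value in $(-1,-1/2)$. At $m_1$, exactly the same mechanism (a complex-conjugate pair merging on the real axis) produces a real double root, and you must show that this double root is \emph{negative}, not positive. The paper does this by observing that when $\eta(y)=(y-r_1)^2\big((y+r_1)^2+r_2^2\big)$, the sign of $r_1$ is determined by the linear coefficient $q$ of $\eta$; a Sturm check shows $q<0$ at $m_0$ (giving $r_1>0$, hence a positive double root for $\zeta$ after undoing the shift), whereas at $m_1$ the derivative $\zeta'$ has only one real root and it is negative, forcing the double root of $\zeta$ to be negative. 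Your appeal to ``numerical evaluation (using $m_0\approx-0.6833$)'' does not distinguish $m_0$ from $m_1$ in kind and leaves the $m_1$ case unresolved.

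Your argument on $(m_2,-1/2)$ is also underspecified. Saying you will ``evaluate $\zeta$ at carefully chosen positive $w$-values to verify $\zeta>0$ on $(0,\infty)$'' is not a method; with only two real roots and positive leading and constant coefficients you need a single point where $\zeta<0$ to pin both real roots on one side. The paper supplies exactly such a point: at $m=m_2$ the quartic has a quadruple root $w_2=-(11+4\sqrt{6})/5<0$, and one checks that $\zeta(w_2)<0$ persists for all $m\in(m_2,-1/2)$, forcing both real roots to lie in $(-\infty,0)$.
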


\begin{proof}
Using Lemma~\ref{lemma:quartic-roots},  let $\eta$ be the shifted quartic of $\zeta(w)$.
From $\eta$, the key quantities $p$ and $p^2-4r$ are found to be
$$
p = - \frac{2(25m^2 + 58m + 25) (3m^4 + 14m^3 + 45m^2 + 54m + 19) (m+1)^2}
{m^2 (m+2)^2 (1+2m)^4}
$$
and
$$
p^2 - 4r  =  \frac{16(25m^2 + 58m + 25) (m+1)^2 t(m)}{m^4 (m+2)^4 (1+2m)^8}
$$
where
$$
\begin{array}{cl}
t(m) = & 75m^{12}+896m^{11}+5528m^{10}+23492m^9+77272m^8+197816m^7+376194m^6 \\[0.05in]
&  + 509968m^5 + 478976m^4 + 302388m^3 + 121964m^2 + 28320m + 2875.
\end{array}
$$

For $m \in (0,1]$, since the discriminant is positive, $p < 0$ and $p^2 - 4r > 0$, 
Lemma~\ref{lemma:quartic-roots} applies and the roots of $\zeta(w)$ are
all real.  The fact that they are all positive follows from Descartes' rule of signs since $\zeta(w)$ has
four sign changes while $\zeta(-w)$ has none.  For $m \in (-1/2,0)$, the discriminant remains positive
and using Sturm's theorem, one can show that $p < 0$ and $p^2 - 4r > 0$ both continue to hold.  It follows that
$\zeta(w)$ has four real roots in this case as well.  However, two of the roots are positive and two
are negative since both $\zeta(w)$ and $\zeta(-w)$ each have two sign changes.

For the case $m \in (m_2,-1/2)$, the discriminant is negative and thus there are precisely two real roots.
To show that the real roots are both negative, we note that at $m = m_2$, the quartic~$\zeta$
has a repeated root of multiplicity four at $w_2 = -(11+4 \sqrt{6} \,)/5 < 0$.  We then
can check that $\zeta(w_2)$ as a function of $m$ is strictly negative for $m \in (m_2,-1/2)$.
Since the leading coefficient of $\zeta$ is positive and since $\zeta(0) = (m+2)^3 > 0$,
it follows that both real roots must be negative in this case.  There are no positive
roots at $m = m_2$ since $w_2 < 0$ is the only root.

For the interval $m \in (-1,m_2)$, the discriminant is positive except when it vanishes at
$m_0$ and $m_1$.  However, either $p > 0$ or $p^2 - 4r < 0$ (or both) for $m$-values 
on this interval.  Consequently, by Lemma~\ref{lemma:quartic-roots}, if $m \in (-1,m_2) - \{m_0,m_1\}$, 
then $\zeta$ has four complex roots.

It remains to check the two cases $m=m_0$ and $m=m_1$, where the discriminant of
$\zeta$ vanishes.  Since the roots are complex in a neighborhood of each parameter
value, one has to check how the roots become repeated as the key parameter value
is approached.  It turns out that in each case, a pair of complex conjugate roots
meet on the real axis.  To see this, we use Descartes' rule of signs to 
check that the resolvent cubic $\xi$ has one negative root and a repeated positive
root at both $m=m_0$ and $m=m_1$.  It follows that one pair of complex conjugate roots
has merged into a real root since any other scenario would require that zero be a double root of the
resolvent cubic.  Because the quintic $q_u(m)$ is squared in the discriminant, the roots
return to being two pairs of complex conjugates after $m$ passes through these special 
parameter values.

To determine whether the repeated real root of $\zeta$ is positive or negative, we note that at either 
value $m=m_0$ or $m=m_1$, the shifted quartic $\eta(w)$ has the form
$$
(w - r_1)^2 [(w + r_1)^2 + r_2^2]  \; = \;
 w^4 + (r_2^2 - 2r_1^2) \, w^2 - 2 r_1 r_2^2 \, w + r_1^2 (r_1^2 + r_2^2).
$$
Thus, the coefficient of the linear term of $\eta$ can be used to determine the sign of the 
repeated real root.  Using Sturm's theorem, one can check that this coefficient is negative
at $m = m_0$ resulting in a positive value of $r_1$.  Then, since $\eta(w) = \zeta(w+\hat{c})$,
where $\hat{c} > 0$ at $m = m_0$, we know that the repeated real root of $\zeta$ when 
$m = m_0$ is positive.  To find this root exactly, we calculate the resultant of
$\zeta(w)$ and $\frac{d}{dw} (\zeta)$ with respect to $m$.  This produces a quintic in $w$ given by
$$
5w^5 - 53 w^4 + 98 w^3 + 198 w^2 + 9w - 1 .
$$
The largest root of this quintic, which is approximately $6.9632775$, is the positive, repeated root
of $\zeta(w)$ when $m = m_0$.  For the special case $m=m_1$, the derivative of $\zeta(w)$ is a cubic with only one real negative root.
Consequently, the repeated real root in this case is negative.
\end{proof}

While it is interesting to discover that the special parameter value $m = m_0$ has a unique solution,
it cannot lead to a physical solution to the problem because $x_1$ necessarily has to be a {\em different}
root of $\zeta$ (since we are excluding the symmetric solutions).  But the other roots of $\zeta$ 
are complex.  The fact that $x_1$ is complex is also confirmed by solving one of the 
polynomials in the Gr\"{o}bner basis $G_{{\rm col}}$ for $x_1$ 
when $m = m_0$ and $x_2 = \sqrt{6.9632775}$.

We summarize our findings for the collinear case in the following theorem.

\begin{theorem}
For the case $m \in (0,1]$, there are 12 collinear solutions, one for each possible
ordering of the vortices.  If $m=1$, all solutions are symmetric and geometrically equivalent to
the same configuration; otherwise, there are 4 symmetric solutions and 8 asymmetric solutions.
For $m \in (-1/2,0)$, there are a total of 6 solutions, 2 symmetric and 4 asymmetric,
while for $m \in (-1,-1/2]$, there are only 2 symmetric solutions and no asymmetric solutions.
\end{theorem}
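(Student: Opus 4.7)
The plan is to combine the symmetric and asymmetric counts obtained in Section~\ref{sec:coll-symm} and in the analysis surrounding Lemma~\ref{lemma:coll-quartic}, so the proof reduces to a careful bookkeeping of labeled solutions in each parameter range.

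I would first assemble the symmetric count. Case~1 of Section~\ref{sec:coll-symm} (namely $x_1=-x_2$) produces the inner-pair relation~(\ref{eq:coll-symm1}) for every $m\in(-1,1]$, and the $\pm$ choice in~(\ref{eq:coll-symm1}) gives two distinct labeled solutions corresponding to the swap $x_1\leftrightarrow x_2$. For $m\in(0,1]$, the additional outer-pair family contributes two further labeled solutions. Case~2 ($r_{12}=2$) contributes nothing outside of $m=1$, where the 8 extra solutions listed in~(\ref{eq:coll-m=1}) are scalings and relabelings of Case~1 configurations. Hence the symmetric total is $4$ for $m\in(0,1)$ and $2$ for $m\in(-1,0)$.

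Next I would assemble the asymmetric count using Lemma~\ref{lemma:coll-quartic}. Since $w=x_2^2$, the lemma's counts of $4$, $2$, and $0$ positive real roots of $\zeta(w)$ translate to $8$, $4$, and $0$ candidate values of $x_2$ on the three parameter ranges $(0,1]$, $(-1/2,0)$, and $(-1,-1/2)\setminus\{m_0\}$. For each such $(x_2,m)\in\V(I_5)$, I would apply the Extension Theorem to the remaining elements of the lex Gr\"obner basis $G_{\mathrm{col}}$ in turn, producing unique values of $x_1,v,u,\lambda,c$. The auxiliary polynomials $u(x_1+x_2)-1$ and $v(x_1-x_2)-1$ ensure by construction that $x_1\neq\pm x_2$, while the Case~2 computation already forces $r_{12}\neq 2$ whenever $m\neq 1$. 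The special parameter $m=m_0$ is ruled out because the only positive root of $\zeta$ at $m_0$ forces $x_1$ to be a different (and complex) root of $\zeta$, a fact noted directly after Lemma~\ref{lemma:coll-quartic}, and the boundary values $m=-1$ and $m=-1/2$ yield only collision roots.

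Adding the symmetric and asymmetric counts produces the stated totals $4+8=12$ on $(0,1)$, $2+4=6$ on $(-1/2,0)$, and $2+0=2$ on $(-1,-1/2]$. For $m=1$, the 4 Case~1 symmetric solutions together with the 8 Case~2 solutions account for all $4!/2=12$ orderings of the now-indistinguishable vortices along the line, and each of these 12 labeled solutions is a scaling, translation, and relabeling of the Hermite configuration~(\ref{eq:coll-Hermite}). The main obstacle is the Extension Theorem step: one must verify that the successive leading coefficients in $G_{\mathrm{col}}$ do not vanish at any admissible $(x_2,m)$ produced by Lemma~\ref{lemma:coll-quartic}, which becomes delicate near $m=-1/2$, where the leading coefficient of $\zeta$ itself vanishes and $\zeta$ drops in degree.
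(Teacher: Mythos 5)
Your proposal follows the same route as the paper (symmetric count from the two cases of Section~\ref{sec:coll-symm}, asymmetric count from Lemma~\ref{lemma:coll-quartic} plus repeated use of the Extension Theorem on $G_{\rm col}$), but it has one genuine gap in the extension step. You claim the auxiliary polynomials $u(x_1+x_2)-1$ and $v(x_1-x_2)-1$ ensure ``by construction'' that $x_1\neq\pm x_2$. That inference is circular: those polynomials guarantee only that points of the \emph{full} variety $\V(I)$ satisfy $x_1\neq\pm x_2$; they do not guarantee that every partial solution $(x_2,m)\in\V(I_5)$, i.e.\ every positive root of $\zeta(w)$, actually lifts to a point of $\V(I)$. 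The variety of an elimination ideal is the Zariski closure of the projection and may strictly contain it. Concretely, after extending $(x_2,m)$ to the unique $x_1$ satisfying $g_2=0$, the next two applications of the Extension Theorem (to $v$ and to $u$) need nonvanishing leading coefficients at the partial solution; if the $x_1$ produced by $g_2$ happened to coincide with $x_2$ or $-x_2$, the lift would fail, that root of $\zeta$ would be spurious, and the asymmetric count would be overstated. The paper closes exactly this hole by substituting $x_1=\pm x_2$ into $g_2$ and computing the resultant with $\zeta(x_2,m)$, obtaining in each case a polynomial in $m$ with no roots on $(-1/2,1]$. Your closing remark about leading coefficients near $m=-1/2$ does not address this: the coefficient $8(2m+1)(m+2)^2\,q_u(m)$ of $x_1$ in $g_2$ is nonzero throughout $(-1/2,1]$, and the delicate point is the possible coincidence $x_1=\pm x_2$, not the degeneration of $\zeta$ at $m=-1/2$ (which lies outside the intervals where asymmetric solutions are claimed).

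A second, smaller omission: the clause ``one for each possible ordering of the vortices'' for $m\in(0,1]$ does not follow from your bookkeeping. A total of $12$ labeled solutions is compatible, a priori, with two solutions realizing one ordering and none realizing another. The paper settles this with a Moulton-type argument: for positive vorticities, each of the $4!$ connected components of the collinear configuration space contains a unique minimum of $H$ restricted to $I=I_0$, each such minimum is a collinear relative equilibrium, and identifying solutions under a $180^\circ$ rotation leaves $4!/2=12$ classes, exactly one per ordering. Your verification at $m=1$ (matching the $12$ solutions to the Hermite configuration~(\ref{eq:coll-Hermite})) does not substitute for this at general $m\in(0,1)$. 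With the resultant check and the Moulton-type argument added, your outline matches the paper's proof.
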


\begin{proof}
The second polynomial in the Gr\"{o}bner basis $G_{{\rm col}}$, denoted $g_2$, is in 
the elimination ideal $I_4 = I \cap \mathbb{C}[x_1,x_2,m]$ and is linear in the variable $x_1$.
The coefficient of $x_1$ is found to be
$8(2m+1)(m+2)^2 \cdot q_u(m)$, which is nonzero for $m \in (-1/2,1]$.
By the Extension Theorem, we can extend any solution $(x_2,m)$ of $\zeta = 0$ to
a partial solution $(x_1,x_2,m)$ in ${\bf V}(I_4)$.  Moreover, since $g_2$ is linear in
$x_1$, there is at most one such solution, and $x_1$ must be real.  

Next, we check that the value of $x_1$ is distinct from the value of $x_2$ (excluding
collision) and $-x_2$ (excluding the symmetric solutions.)  This is accomplished by substituting
$x_1 = x_2$ into $g_2$ and then computing the resultant of this polynomial with $\zeta(x_2,m)$.
A polynomial in $m$ is thus obtained and it is easily checked that this polynomial has no roots
for $m \in (-1/2,1]$.  Therefore, the value of $x_1$ obtained by solving $g_2 = 0$ is distinct from
$x_2$.  A similar calculation, using the substitution $x_1 = -x_2$ in $g_2$, shows that
$x_1 \neq -x_2$ as well.

At this point, the Extension Theorem can be applied four more times using four of the basis polynomials
in $G_{{\rm col}}$ that are linear in the variables $v, u, \lambda$ and $c$, respectively, each
having nonzero leading coefficients.  Thus, fixing an $m \in (-1/2,1]$, 
for each positive root of $\zeta(w)$, we obtain two possible values of $x_2$, each of which 
extends uniquely to a full, real, asymmetric solution of our problem.  The precise count on the number of 
solutions for each case then follows directly from Lemma~\ref{lemma:coll-quartic} and Section~\ref{sec:coll-symm}.

The fact that there is one collinear relative equilibrium for each ordering of the vortices when 
$m > 0$ follows from a straight-forward generalization of a well-known result in the Newtonian $n$-body problem due to
Moulton~\cite{moulton_straight_1910}.  In brief, for each of the $n!$ connected components of the phase space for the
collinear $n$-vortex problem, there is a unique minimum of $H$ restricted to the ellipsoid $I = I_0$ 
(see p. 33 of~\cite{meyer_hall_offin_2009} or Section 6.1 of~\cite{oneil_stationary_1987} for details).
Each such minimum is a collinear relative equilibrium.  Identifying solutions
equivalent under a $180^\circ$ rotation of the plane gives a final count of $n!/2$.
\end{proof}

\begin{theorem}
When $m \in (-1,0)$, the signs of the vorticities in a collinear relative equilibrium must be arranged
as $+ - - +$ (symmetric case only), or as $+ - + -$ or $- + - +$
(when asymmetric solutions exist).  Therefore, when $m < 0$, it is not possible to have a collinear solution where both pairs
of vortices with the same strength are adjacent to each other.  
\end{theorem}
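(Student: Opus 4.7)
The plan is to split the argument into the symmetric and asymmetric cases, leaning on the explicit analyses already carried out in Section~\ref{section:collinear}. For the symmetric solutions, Case~1 of Section~\ref{sec:coll-symm} gives the closed form $|x_1|=\sqrt{(5m+5-\sqrt{25m^2+46m+25})/(2m)}$, which for $m\in(-1,0)$ lies strictly in $(1/\sqrt{5},1)$. Consequently both positive vortices $x_1$ and $x_2$ lie strictly between $x_3=-1$ and $x_4=1$, so reading the configuration in positional order produces the arrangement $-\,+\,+\,-$, which is the ``$+--+$'' class of the theorem (same-sign pairs non-adjacent, different-sign pairs adjacent). The Gr\"obner analysis of Case~2 has already ruled out any further symmetric solutions when $m\neq 1$.

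For the asymmetric solutions, which by Lemma~\ref{lemma:coll-quartic} occur only when $m\in(-1/2,0)$, the goal is to locate the two positive real roots $w_1<w_2$ of the quartic $\zeta(w)$ with respect to $w=1$. We already have $\zeta(0)=(m+2)^3>0$ and the leading coefficient $m^2(m+2)(1+2m)^2>0$ on this interval, so it suffices to check the sign of $\zeta(1)$. A direct expansion and factorization should yield
\[
\zeta(1)\;=\;-128\,(2m+1)\,(m+1)^2\,(m+2)^2,
\]
which is strictly negative on $(-1/2,0)$ since $2m+1>0$ there. Together with $\zeta(0)>0$ and $\zeta(w)\to +\infty$ as $w\to\infty$, the intermediate value theorem then forces $w_1\in(0,1)$ and $w_2\in(1,\infty)$. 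This evaluation of $\zeta(1)$ is the computational heart of the proof and the main obstacle; the clean factorization, with zeros exactly at the degenerate parameters $m=-1$ (where $\Gamma=0$) and $m=-1/2$ (where asymmetric solutions disappear), is the essential input and serves as a welcome consistency check with the earlier analysis.

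With $w_1<1<w_2$ established, the conclusion follows quickly. As noted just below the definition of $\zeta$, eliminating $x_2$ rather than $x_1$ produces the same quartic with $w=x_1^2$, so both $x_1^2$ and $x_2^2$ are positive roots of~$\zeta$. Since the asymmetric case excludes $x_1=\pm x_2$, the equality $x_1^2=x_2^2$ is impossible, and hence $\{x_1^2,x_2^2\}=\{w_1,w_2\}$. Exactly one of the two positive vortices therefore lies in the open interval $(-1,1)$ while the other lies outside $[-1,1]$, so in positional order the four vortices display an alternating sign pattern $+-+-$ or $-+-+$ in every case. In particular, the arrangements $++--$ and $--++$ never occur for $m\in(-1,0)$, so the two equal-strength pairs can never be adjacent, which is the assertion of the theorem.
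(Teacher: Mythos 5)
Your proposal is correct and follows essentially the same route as the paper: the symmetric pattern is read off from the explicit Case~1 formula (the paper writes this class as $+--+$, which matches your $-++-$ after rescaling all vorticities by $1/m<0$), and the asymmetric case hinges on exactly the paper's computation $\zeta(1)=-128(1+2m)(m+1)^2(m+2)^2<0$ on $(-1/2,0)$, combined with $\zeta(0)>0$, the positive leading coefficient, and the two-positive-root count of Lemma~\ref{lemma:coll-quartic} to place one root on each side of $w=1$. The factorization you flagged as the main obstacle checks out and appears verbatim in the paper's proof.
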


\begin{proof}
The symmetric case has already been analyzed at the start of this section.  For the asymmetric case,
we note that 
$$
\zeta(1) \; = \; -128 (1 + 2m) (m+2)^2 (m+1)^2 
$$
is strictly negative when $m \in (-1/2,0)$.  We also note that 
$\zeta(0) > 0$ and the leading coefficient of $\zeta(w)$ is always positive.
Since $\zeta$ has only two positive roots for $m \in (-1/2,0)$, it follows that
one root is less than one and the other is larger than one. (The two roots approach
one as $m \rightarrow -1/2^+$.)  The values of $x_1^2$ and $x_2^2$ must each be
roots of $\zeta(w)$.  Because we are only considering asymmetric solutions,
we have that $|x_1| < 1$ and $|x_2| > 1$ or vice versa.  Therefore, when $m \in (-1/2,0)$,
the only possible asymmetric orderings of vortices have signs arranged 
as $+ - + -$ or $- + - +$.
\end{proof}




\section{Asymmetric Strictly Planar Relative Equilibria}
\label{section:asymmetric}

\subsection{Eliminating symmetric solutions}
\label{sec:elim-symm}

In this section we study strictly planar relative equilibria that do not have a line of symmetry.
A major result proved here is that any convex solution with $m > 0$ or any
concave solution with $m < 0$ must contain a line of symmetry.
We prove this by saturating the Gr\"obner basis in order to eliminate any symmetric
solutions, and then showing the resulting system has no real solutions.

Let $\mathcal{\widetilde F}$ and  $\mathcal{\widetilde G}$ be the Albouy-Chenciner and the unsymmetrized 
Albouy-Chenciner equations in terms of $s_{ij}=r_{ij}^2$, respectively, with $\lambda' = -1, \Gamma_1 = \Gamma_2 = 1$, and 
$\Gamma_3=\Gamma_4=m \neq -1$. Let $\tilde e_{CM}$ be the Cayley-Menger determinant 
written in terms of $s_{ij}=r_{ij}^2$ and let $\mathcal{\widetilde H}$ be the Dziobek equations
with $\lambda' = -1$.  The polynomials 
$\mathcal{\widetilde F}, \mathcal{\widetilde G}, \tilde e_{CM}$, 
and $\mathcal{\widetilde H}$ belong to the polynomial ring 
$\mathbb{C}[m,s_{12},s_{13},s_{14},s_{23},s_{24},s_{34}]$. 

We begin by finding a Gr\"obner basis for the ideal  $I_s=\langle \mathcal{\widetilde F},\mathcal{\widetilde G},\tilde e_{CM}, \mathcal{\widetilde H} \rangle$. In order to accomplish this
we first find a Gr\"obner basis $G_{J_s}$ for the ideal $J_s=\langle \mathcal{\widetilde F},\mathcal{\widetilde G},\tilde e_{CM}\rangle$ and then we 
compute a Gr\"obner basis $G_{I_s}$ for $I_s = \langle G_{J_s}, \mathcal{\widetilde H} \rangle$.
At this stage we saturate with respect to the variables $s_{13},s_{14}$, and $s_{24}$, to eliminate possible solutions where one of the mutual distances has zero length. 
We also saturate with respect to $(s_{13}-s_{24})$, $(s_{14}-s_{23})$, $(s_{13}-s_{14})$, $(s_{23}-s_{24})$, $(s_{13}-s_{23})$, and $(s_{14}-s_{24})$.
Due to Lemma~\ref{lemma:symm} and equation~(\ref{eq:symm1}), saturating with respect to these differences is equivalent to eliminating any symmetric solutions. 
We denote the resulting Gr\"obner basis as $\widetilde G_{I_s}$.

Computing $\widetilde G_{I_s}$ with respect to an elimination order that eliminates all the variables except  $s_{12}$ and $s_{34}$
yields the following system of two equations in two unknowns:
\[
s_{34} m + s_{12} - m - 1=0, \quad \quad s_{12}^{2} - 2 s_{12} s_{34} + s_{34}^{2} - 1=0.
\]
This system has the two solutions
\begin{equation}
(s_{12},s_{34})=\left(\frac{2m+1}{m+1},\frac{m}{m+1}\right), \quad \quad (s_{12},s_{34})=\left(\frac{1}{m+1}, \frac{m+2}{m+1}\right),
\label{sols-s12s34}
\end{equation}
where one solution is sent to the other one by the transformation $m \rightarrow \frac{1}{m}$ and
$s_{12} \leftrightarrow s_{34}$.
If we compute the Gr\"obner basis of $\tilde G_{I_s}$ with respect to an elimination order that eliminates all but one 
$s_{ij} = x$, 
we obtain a polynomial that is the product  of the two polynomials 
\[\begin{split} p_1 = \; &4 \, {\left(m^2 + 2 \, m + 1\right)} x^{4} - 4 \, {\left(5 \,
m^2 + 8 \, m + 3\right)} x^{3} + 2 \, {\left(16 \, m^2 +
21 \, m + 7\right)} x^{2}\\ &- 2 \, {\left(10 \, m^2 + 11 \,
m + 3\right)} x + 4 \, m^2 + 4 \, m + 1, \quad \mbox{ and }
\end{split}\]
\[\begin{split}
p_2 = \; &  4 \, {\left(m^2 + 2 \, m + 1\right)} x^{4} - 4 \, {\left(3 \,
m^2 + 8 \, m + 5\right)} x^{3} + 2 \, {\left(7 \, m^2 +
21 \, m + 16\right)} x^{2}\\ &- 2 \, {\left(3 \, m^2 + 11 \,
m + 10\right)} x + m^2 + 4 \, m + 4,
\end{split}
\]
where $x$ is one of  $s_{13}, s_{14}, s_{23}$ or $s_{24}$.
For the case when $s_{12}$ or $s_{34}$ is the only variable not eliminated, we obtain the products
\[ 
(m s_{12} + s_{12} - 1)(m s_{12} + s_{12} - 2 m - 1), \quad \mbox{ or }
\]
\[ 
(m s_{34} + s_{34} - m)(m s_{34} + s_{34} - m - 2),
\]
respectively.
All the  Gr\"obner basis  computations were performed using {\sc Singular} \cite{decker_singular_2011} and  Sage \cite{stein_sage_2011}.

Analyzing the polynomials $p_1$ and $p_2$ (which we do in the next section), one can prove the following:

\begin{lemma}
\label{lemma:roots}
For any $m \in (-1,1),$ the polynomial $p_1$ has no real positive roots.
For any $m \in (-1,1),$ the polynomial $p_2$ has four positive distinct real roots
except for $m=0$, where one of the roots is repeated (at 1).
If $m \in [0,1]$, each root of $p_2$ lies in one of the intervals   $J_1=[0,\frac 1 2], J_2=[\frac 1 2, 1], J_3=[1, \frac 3 2]$, and $J_4=[\frac 3 2,5 ]$.   If $m \in (-1,0)$, each 
 root of $p_2$ lies in one of the intervals $K_1=[0,\frac 1 2],K_2=[\frac 1 2,1],
 K_3=[1, \frac{m+2}{m+1}]$ and $K_4=[\frac{m+2}{m+1},\infty]$.  
\end{lemma}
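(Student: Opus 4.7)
My plan reduces both quartics to elementary forms via two algebraic identities. Guided by the goal of showing $p_1 > 0$ for $x > 0$, I would try to write $p_1$ as a sum of squares. Matching the $x^4$, $x^3$, and constant coefficients uniquely suggests the inner quadratic $2(m+1)x^2 - (5m+3)x + (2m+1)$, and a short computation reveals the identity
\[
p_1(m,x) \;=\; \bigl[2(m+1)x^2 - (5m+3)x + (2m+1)\bigr]^2 \;+\; (1-m^2)\, x^2,
\]
together with the analogous identity
\[
p_2(m,x) \;=\; \bigl[2(m+1)x^2 - (3m+5)x + (m+2)\bigr]^2 \;-\; (1-m^2)\, x^2
\]
for $p_2$. (The two identities are linked by the symmetry $p_2(m,x) = m^2\, p_1(1/m, x)$.) For $m \in (-1,1)$ the term $(1-m^2)$ is strictly positive, so the first identity immediately gives $p_1(m,x) > 0$ for all $x \ne 0$, settling the claim for $p_1$.

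For $p_2$, the second identity is a difference of squares that factors over the reals as $p_2 = Q_1(x)\, Q_2(x)$, where
\[
Q_1(x) = 2(m+1)x^2 - \bigl[(3m+5) - \sqrt{1-m^2}\,\bigr] x + (m+2),
\]
\[
Q_2(x) = 2(m+1)x^2 - \bigl[(3m+5) + \sqrt{1-m^2}\,\bigr] x + (m+2).
\]
Each factor has positive leading and constant coefficients, and both linear coefficients are negative on $(-1,1)$ (since $(3m+5) \pm \sqrt{1-m^2} > 0$ there), so by Vieta any real roots must be positive. The discriminants work out to $2(3m+5)(1-\sqrt{1-m^2})$ and $2(3m+5)(1+\sqrt{1-m^2})$ respectively, both strictly positive on $(-1,1)$ except that the first vanishes at $m=0$, where $Q_1(x) = 2(x-1)^2$. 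Since $Q_2(x) - Q_1(x) = -2x\sqrt{1-m^2}$ is nonzero for $x > 0$ and $m \ne 0$, the two quadratics share no positive root. Together this yields four distinct positive real roots of $p_2$ for $m \in (-1,1)\setminus\{0\}$ and three distinct roots at $m=0$ with a double root at $x=1$, exactly as claimed.

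For the interval localization I would evaluate $p_2$ at the relevant endpoint candidates. Direct computation gives the surprisingly clean expressions
\[
p_2(m,0) = (m+2)^2,\quad p_2\!\left(m,\tfrac{1}{2}\right) = \tfrac{m^2-1}{4},\quad p_2(m,1) = m^2,
\]
\[
p_2\!\left(m,\tfrac{3}{2}\right) = \tfrac{(13m+5)(m-1)}{4},\quad p_2\!\left(m,\tfrac{m+2}{m+1}\right) = \tfrac{(m+2)^2(m-1)}{m+1},
\]
along with $p_2(m,5) = 1321m^2 + 1944m + 704 > 0$ on $[0,1]$ (its two $m$-roots both lie below $-1/2$) and $p_2(m,x) \to +\infty$ as $x \to +\infty$. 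The sign pattern among these values, combined with the count of exactly four positive roots established above, forces the intermediate value theorem to place one root in each of the claimed intervals $J_i$ (for $m \in [0,1]$) and $K_i$ (for $m \in (-1,0)$).

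The main obstacle is finding the sum-of-squares identity in the first place: the choice of inner quadratic is not forced until one observes that three of the five coefficients of $p_1$ determine it uniquely and that the remaining $x^2$-discrepancy factors cleanly as $(1-m^2)x^2$. Once the identity is in hand, the rest is routine; the only care points are the behaviour at $m=0$ (where the discriminant of $Q_1$ vanishes) and at $m=1$ (where $p_2$ vanishes at $x = 1/2$ and $x = 3/2$, so the corresponding roots land on the shared boundaries of $J_1,J_2$ and $J_3,J_4$).
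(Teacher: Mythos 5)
Your proof is correct, and it takes a genuinely different route from the paper's. I checked your identities: expanding $\bigl[2(m+1)x^2-(5m+3)x+(2m+1)\bigr]^2$ reproduces $p_1$ except for an $x^2$-deficit of exactly $(1-m^2)x^2$, and likewise for $p_2$ with $-(1-m^2)x^2$; the discriminants of $Q_1,Q_2$ are indeed $2(3m+5)\bigl(1\mp\sqrt{1-m^2}\,\bigr)$, and the endpoint evaluations $p_2(m,0)=(m+2)^2$, $p_2(m,\tfrac12)=\tfrac{m^2-1}{4}$, $p_2(m,1)=m^2$, $p_2(m,\tfrac32)=\tfrac{(13m+5)(m-1)}{4}$, $p_2\bigl(m,\tfrac{m+2}{m+1}\bigr)=\tfrac{(m+2)^2(m-1)}{m+1}$ and $p_2(m,5)=1321m^2+1944m+704$ all verify. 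The paper proceeds differently on both halves: for $p_1$ it removes the cubic term and applies its quartic criterion (Lemma~\ref{lemma:quartic-roots}), computing $p$ and $p^2-4r$ and showing that $p>0$ or $p^2-4r<0$ throughout $(-1,1)$, so all roots are complex; for $p_2$ it localizes roots interval by interval via M\"obius transformations mapping each $J_i$ (resp.\ $K_i$) to $[0,\infty)$, then applies Descartes' rule of signs to each transformed polynomial $P_{J_i}$, $P_{K_i}$ to get exactly one positive root apiece. Your sum-of-squares and difference-of-squares identities buy more than the paper's argument: they show $p_1$ has no real roots at all (not merely no positive ones) on $(-1,1)$, they factor $p_2=Q_1Q_2$ over $\mathbb{R}$ and hence give closed-form expressions for all four roots, and they make both the distinctness claim and the $m=0$ degeneracy (where $Q_1=2(x-1)^2$) transparent, reducing the localization to routine sign checks plus the intermediate value theorem and the exact root count. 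The trade-off is that the identities are ad hoc lucky finds, whereas the paper's M\"obius-plus-Descartes machinery is systematic and is the same toolkit deployed elsewhere in the paper where no such clean factorization exists. Two minor points: the qualifier $m\neq 0$ in your shared-root step is unnecessary, since $\sqrt{1-m^2}>0$ for every $m\in(-1,1)$; and at $m=0$ your sign chain has $p_2(m,1)=0$ rather than $>0$, so the (double) root at $x=1$ sits on the shared endpoint of the closed intervals $J_2$ and $J_3$ --- consistent with the statement, and analogous to the $m=1$ situation you already flag for $J_1\cap J_2$ and $J_3\cap J_4$.
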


Using Lemma~\ref{lemma:roots}, we obtain the following fundamental result:
\begin{theorem}
\label{theo:convexity}
Let ${\bf x}$ be a strictly planar relative equilibrium of the four-vortex problem with vorticities
$\Gamma_1 = \Gamma_2 = 1$ and $\Gamma_3 = \Gamma_4 = m$.
\begin{enumerate}

\item  For any $m>0$, every convex relative equilibrium ${\bf x}$ has a line of symmetry,
and the only possible strictly planar asymmetric configurations are concave with the 
two vortices of smaller strength lying on the exterior triangle.  If $m=1$, all
solutions contain a line of symmetry.

\item  For any $m<0$, every concave relative equilibrium ${\bf x}$ has a line of symmetry, 
and the only possible strictly planar asymmetric configurations are convex 
with equal-strength vortices necessarily adjacent.
\end{enumerate}

\end{theorem}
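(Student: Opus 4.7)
The strategy is to combine the algebraic information extracted from the Gr\"obner basis computation of the previous subsection with the sign and magnitude constraints supplied by Propositions~\ref{prop:concave} and~\ref{prop:convex}. Any asymmetric strictly planar relative equilibrium has $(s_{12}, s_{34})$ equal to one of the two pairs listed in equation~(\ref{sols-s12s34}), and each of the remaining distances $s_{13}, s_{14}, s_{23}, s_{24}$ is a distinct positive real root of $p_2$. By Lemma~\ref{lemma:roots} the four roots lie in the four disjoint intervals $J_1, J_2, J_3, J_4$ when $m\in(0,1)$, and in $K_1, K_2, K_3, K_4$ when $m\in(-1,0)$; in either regime exactly two roots fall below $1$ and two above. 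At $m=1$ a direct computation shows that $p_2$ factors as $8\,(x-\tfrac{1}{2})^2(x-\tfrac{3}{2})^2$, so the required distinctness of the four roots fails and no asymmetric solutions exist, proving the final sentence of part~(1).

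For part~(1) with $m\in(0,1)$, the convex case is ruled out as follows. Among the three cyclic orderings of four vortices around a convex quadrilateral, in two orderings vortices $1$ and $2$ are opposite each other while the remaining ordering places them adjacent. In the first two orderings $s_{12}$ and $s_{34}$ are a pair of opposite sides; in the third they are the two diagonals. In every case, by Proposition~\ref{prop:convex}(1) with $\lambda'=-1$, both $s_{12}$ and $s_{34}$ must lie on the same side of $1$ (exterior sides have squared length less than $1$, diagonals greater than $1$). But equation~(\ref{sols-s12s34}) always places one above $1$ and the other below, a contradiction. For the concave case Proposition~\ref{prop:concave}(1) dictates that the three sides incident to the interior vortex have squared length below $1$ while the three outer sides exceed $1$. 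The solution with $s_{12}<1<s_{34}$ then forces the interior vortex to be $1$ or $2$, giving exactly the configurations described in the theorem. The competing solution with $s_{12}>1>s_{34}$ would force the interior vortex to be $3$ or $4$ and must be excluded separately.

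For part~(2) with $m\in(-1,0)$, the first solution in~(\ref{sols-s12s34}) yields $s_{34}=m/(m+1)<0$ and so is geometrically inadmissible, leaving the second solution with $s_{12}>1$ and $s_{34}>1$. In the concave case Proposition~\ref{prop:concave}(2) applies: whichever vortex is taken interior, the inequalities obtained force either $s_{12}<1$ or $s_{34}<1$, contradicting the surviving solution. Hence no asymmetric concave configurations exist. In the convex case, Proposition~\ref{prop:convex}(3) excludes the scenario of equal-strength vortices opposite one another since it would demand all six squared distances on the same side of $1$, incompatible with the two $p_2$-roots lying in $K_1\cup K_2$. Only the scenario with equal-strength vortices adjacent survives, and Proposition~\ref{prop:convex}(2) together with the interval structure of the $p_2$-roots shows this case is consistent.

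The principal obstacle is excluding, for $m\in(0,1)$, the candidate solution $s_{12}>1>s_{34}$ in the concave subcase. Both $(s_{12},s_{34})$ values solve the elimination ideal, and the propositions alone cannot distinguish them. The cleanest resolution is through the Extension Theorem applied to the Gr\"obner basis $\widetilde G_{I_s}$: substituting this candidate into the basis polynomials for $s_{13},s_{14},s_{23},s_{24}$ causes a leading coefficient to vanish, so no lift to a full planar solution exists. Equivalently, one checks directly that for every permutation of the $p_2$-roots assigned to $(s_{13},s_{14},s_{23},s_{24})$, the resulting six-distance tuple violates $\tilde e_{CM}=0$. Once this bookkeeping is settled, the rest of the argument is a clean case-check against Propositions~\ref{prop:concave} and~\ref{prop:convex}.
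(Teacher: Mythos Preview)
Your overall strategy---combine the Gr\"obner basis output with the sign/magnitude constraints of Propositions~\ref{prop:concave} and~\ref{prop:convex}---matches the paper's, and your treatment of $m\in(-1,0)$ and of $m=1$ is essentially correct. The genuine gap is the one you yourself flag: for $m\in(0,1)$ you cannot rule out the branch $(s_{12},s_{34})=\bigl(\tfrac{2m+1}{m+1},\tfrac{m}{m+1}\bigr)$ by geometry alone, and your proposed fix is not right. The leading coefficient of the relevant basis polynomial in $s_{13}$ does \emph{not} vanish at $s_{12}=\sigma_1$, so the Extension Theorem does not fail in the way you describe; over~$\mathbb{C}$ the partial solution extends perfectly well.

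The paper resolves this with one extra piece of Gr\"obner basis information you have not used. In the basis $\widetilde G_{I_s}$ computed with an order eliminating everything except $s_{12}$ and $s_{13}$, there is a polynomial $p_4(s_{12},s_{13})$ (equation~(\ref{eq:p_4})) with the property that substituting $s_{12}=\sigma_1=\tfrac{2m+1}{m+1}$ and clearing denominators yields exactly $p_1(s_{13})$, while substituting $s_{12}=\sigma_2=\tfrac{1}{m+1}$ yields $p_2(s_{13})$. Since Lemma~\ref{lemma:roots} says $p_1$ has no positive real roots for $m\in(-1,1)$, the branch $s_{12}=\sigma_1$ is killed outright: there is simply no positive real $s_{13}$ compatible with it. This is also why your opening sentence (``each of the remaining distances is a root of $p_2$'') is true but needs justification---a priori those distances are roots of the product $p_1\cdot p_2$, and it is precisely the emptiness of $p_1$ over $\mathbb{R}^+$ that forces them into $p_2$.

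One smaller omission: for $m<0$ you apply Proposition~\ref{prop:convex} with $\lambda'=-1$ without comment, but $\lambda'>0$ is possible when vorticities have mixed signs. The paper disposes of this up front by noting that $\lambda'>0$ with $m<0$ forces a convex configuration (so concave solutions automatically have $\lambda'<0$), and the convex $\lambda'>0$ case is handled separately via Proposition~\ref{prop:symmetry-lampos}.
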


\begin{proof}
First note that the case $\lambda' > 0$ is necessarily excluded in the hypotheses of the theorem.
If $m > 0$, $\lambda' = - \lambda/\Gamma < 0$ is guaranteed.  If $m < 0$ and $\lambda' > 0$, then
the configuration must be convex.  Hence, we can assume that $\lambda' = -1$.

If we compute a Gr\"obner basis of $\tilde G_{I_s}$ that eliminates 
all the $s_{ij}$ variables except  $s_{12}$ and $s_{13}$, we obtain several polynomials including 
\[
p_3= {\left(m s_{12} - 2 \, m + s_{12} - 1\right)}{\left(m s_{12} + s_{12} - 1\right) }
\]
and
 \begin{equation}\label{eq:p_4}
\begin{split}
p_4=&-8 \, m s_{13}^{4} + 8 \, {\left(m s_{12} + 3 \, m - s_{12}
+ 1\right)} s_{13}^{3}  - 2 \, {\left(9 \, m s_{12} - 4 \, s_{12}^{2}
+ 14 \, m - s_{12} + 5\right)} s_{13}^{2}\\
&+ 2 \, {\left(7 \, m s_{12} - 4 \, s_{12}^{2} + 6 \, m + s_{12} + 3\right)} s_{13}
 - 3 \,m s_{12} + 2 \, s_{12}^{2} - 2 \, m - s_{12} - 1.
\end{split}
\end{equation}
The roots of $p_3$ are $\sigma_1=\frac{2m+1}{m+1}$ and $\sigma_2=\frac{1}{m+1}$.
Substituting $\sigma_1$ into $p_4$, clearing denominators, and rescaling by a constant gives the value $p_1(s_{13})$,
which is nonzero for $m \in (-1,1)$ by the first statement in Lemma~\ref{lemma:roots}. 
Substituting $\sigma_2$ into $p_4$, clearing denominators, and rescaling by a constant gives the value $p_2(s_{13})$. 
For the case $m \in (-1,1)$,
$p_2$ has four distinct  positive real roots. 
Thus, in order to have a geometrically realizable solution, we must have $s_{12} = 1/(m+1)$ and
$s_{13}$ must be one of the four positive roots of $p_2$.
From equation~(\ref{sols-s12s34}), we also have that $s_{34} = (m+2)/(m+1)$.

Since $\lambda' = -1$, we have $1/\sqrt{- \lambda'} = 1$ in Propositions \ref{prop:concave} and~\ref{prop:convex}.
If $m \in (0,1)$, then $s_{12} < 1$ and $s_{34} > 1$.  Since we have saturated with respect to all possible differences
of the remaining four $s_{ij}$ variables, the values of $s_{13}, s_{14}, s_{23}, s_{24}$ must all be {\em distinct} roots of $p_2$.
By Lemma~\ref{lemma:roots},  two of these values are less than one, and two are greater than one. 
By Proposition~\ref{prop:convex} part 1., the configuration cannot be convex.  Since we have saturated
the Gr\"{o}bner basis to eliminate symmetric solutions, it follows that for $m \in (0,1)$, the only strictly planar, asymmetric
configurations are concave.  By Proposition~\ref{prop:concave} part 1., 
vortices 3 and 4 (with equal strength $m$) lie on the outer triangle, while either vortex 1 or 2
can lie in the interior of the concave configuration.  (The situation is reversed under the transformation
$m \mapsto \frac{1}{m}$.) 

For the special case $m=1$, we have that $s_{12} = 3/2$ and $s_{34} = 1/2$
or vice versa.  If $s_{12} = 3/2 = \sigma_1$, then substitution into $p_4$ yields that $s_{13}$
is a root of $p_1$.  But the only real roots of $p_1$ when $m=1$ are $1/2$ and $3/2$.
Thus, we either have $s_{12} = s_{13}$ or $s_{34} = s_{13}$ and by equation~(\ref{eq:symm1}),
the configuration is necessarily a kite (a symmetric configuration).  If $s_{12} = 1/2 = \sigma_2$,
a similar argument with $p_2$ replacing $p_1$ also yields a kite configuration.
This completes the proof of part 1. of the theorem.

If $m \in (-1,0)$, we have  $s_{12 } > 1$, $s_{34} > 1$.  Taken together with 
Lemma~\ref{lemma:roots}, this implies that four mutual distances are greater than one and two are less than one.  
Therefore, from Propositions \ref{prop:concave} and~\ref{prop:convex}, it follows that the only real, positive
solutions to the system of equations given by $\tilde G_{I_s}$ correspond to convex asymmetric configurations 
where the equal-strength vortices are adjacent.  This proves part 2. of the theorem.
\end{proof}

\subsection{Proof of Lemma~\ref{lemma:roots}}

In this section we analyze the polynomials $p_1$ and $p_2$ and prove Lemma~\ref{lemma:roots}. 
For $p_1$, we use Lemma~\ref{lemma:quartic-roots}, while for $p_2$,
we make appropriate choices of M\"obius transformations.


Both $p_1$ and $p_2$ are polynomials of degree four.   
Moreover, if $m > 0$, their coefficients have four sign changes, and thus they have either four, two or zero positive real roots, 
according to Descartes' rule of signs. The roots of the 
polynomials can be obtained using Ferrari's formula for quartic equations \cite{cardano_1545}.  However, we can understand 
a great deal about the solutions by using the resolvent cubic and M\"obius transformations.

One important observation is that if we make the change $m \mapsto \frac{1}{m}$ in the polynomial $p_1$, 
and clear the denominators, we obtain $p_2$.  In particular, this means that the roots of $p_2$ for an $m \in (0,1)$ are
equivalent to the roots of $p_1$ for $m \in (1,\infty)$.   Hence, it suffices to study the polynomials for $|m|<1$.

\subsubsection*{Discriminants of $p_1$ and $p_2$}

The discriminant of $p_1$ is $256 (m+1)^4 (5m+3)^2 (2m+1)^2  (m-1)^2$, which is strictly positive except for the $m$-values
$-1, -3/5, -1/2$ and $1$ where it vanishes.  At $m=-1$, $p_1$ reduces to a quadratic polynomial with
repeated roots at $x = 1/2$.  For $m = -3/5$, $p_1$ has only complex roots, while for $m = -1/2$,
$p_1$ has a double root at $x=0$.  At $m=1$, there are double real roots at $x=1/2$ and 
$x = 3/2$.
 
The discriminant for $p_2$ is $256 (m + 2)^2 (3m + 5)^2 (m + 1)^4  m^2 (m - 1)^2$
which is strictly positive except for the $m$-values
 $-2, -5/3, -1, 0$ and $1$ where it vanishes.  Focusing on the values between $-1$ and $1$, we have that 
 at $m=-1$, $p_2$ reduces to a quadratic polynomial with
 repeated roots at $x = 1/2$.  For $m = 0$, $p_2$ has a double root at $1$ and two other roots
 at $(3 \pm \sqrt{5} \, )/2$.  At $m=1$, $p_2$ has double real roots at $x=1/2$ and 
 $x = 3/2$.  For all values of $m \in (-1,1)$, except for $m=0$, the four roots of $p_2$ are
 distinct.

\subsubsection*{Polynomial $p_1$}

After applying the shift $x \mapsto  x + (m+1)(5m+3)/(4(m+1)^2)$ to remove the
 cubic term in $p_1$, we compute the key coefficients of the
 shifted quartic to be 
 \begin{eqnarray*}
 p & =&   - \frac{11m^2 + 6m - 1}{8 (m+1)^2}\; , \quad \mbox{ and } \\[0.05in]
 p^2 - 4r & = &   \frac{ (m - 1) (5m + 3) (7m + 3)}{ 16(m+1)^3 }\; .
\end{eqnarray*}
It is straight-forward to check that either $p > 0$ or $p^2 - 4r < 0$ (or both), for each $m$-value in $(-1,1)$.
Using Lemma~\ref{lemma:quartic-roots}, it follows that the roots of $p_1$ are all complex for $m \in (-1,1)$, except
when $m = -1/2$, where 0 is a double root.

\subsubsection*{Polynomial $p_2$: The case $m \in [0,1]$}


Consider the intervals $J_1=[0,\frac 1 2], J_2=[\frac 1 2, 1], J_3=[1, \frac 3 2]$, and $J_4=[\frac 3 2,5 ]$.
We show that for each $m \in [0,1]$, $p_2$ has a root in each of these intervals.  The result
follows from direct computation for $m=0$ and $m=1$, noting that some of the roots
are a shared endpoint of adjacent intervals.

\begin{center}
\begin{tabular}{l|l}
\hline
$x$-interval& M\"obius Transformation\\
\hline 
  &  \\
$J_1$&  $x=\frac 1 2\frac{u}{u+1},\quad m=\frac{\alpha}{\alpha+1}$\\[0.07in]
$J_2$&  $x=\frac{u+\frac 1 2 }{u+1},\quad m=\frac{\alpha}{\alpha+1}$\\[0.07in] 
$J_3$&  $x=\frac{\frac 3 2 u+1 }{u+1},\quad m=\frac{\alpha}{\alpha+1}$\\[0.07in]
$J_4$&  $x=\frac{5u+\frac 3 2 }{u+1},\quad m=\frac{\alpha}{\alpha+1}$\\
\end{tabular} 
\end{center}

Using the  M\"obius transformations given in the table above, after clearing the denominators,  we obtain the following four polynomials 
{\footnotesize\begin{align*}  
P_{J_1}=&  -{\left(2 \, \alpha + 1\right)} u^{4} - 2 \, {\left(2 \, \alpha + 1\right)} u^{3}
+ 2 \, {\left(8 \, \alpha^{2} + 11 \, \alpha + 4\right)} u^{2} + 4 \, {\left(12 \,
\alpha^{2} + 17 \, \alpha + 6\right)} u + 36 \, \alpha^{2} + 48 \, \alpha + 16  \\
P_{J_2} = \; & 4 \, \alpha^{2} u^{4} + 4 \, {\left(4 \, \alpha^{2} + \alpha \right)} u^{3} + 2 \,
{\left(8 \, \alpha^{2} - \alpha - 2\right)} u^{2} - 6 \, {\left(2 \, \alpha + 1\right)}
u - 2 \, \alpha - 1    \\
P_{J_3}=& -{\left(18 \, \alpha + 5\right)} u^{4} - 2 \, {\left(22 \, \alpha + 5\right)}
u^{3} + 2 \, {\left(8 \, \alpha^{2} - 13 \, \alpha - 2\right)} u^{2} + 4 \alpha \,
{\left(4 \, \alpha - 1 \right)} u + 4 \, \alpha^{2}    \\
P_{J_4}= \; & 4 \, {\left(3969 \, \alpha^{2} + 3352 \, \alpha + 704\right)} u^{4} + 4 \,
{\left(1764 \, \alpha^{2} + 815 \, \alpha + 16\right)} u^{3} + 2 \, {\left(392 \,
\alpha^{2} - 397 \, \alpha - 218\right)} u^{2}\\
&- 2 \, {\left(134 \, \alpha + 45\right)}
u - 18 \, \alpha - 5.   
\end{align*}}
Using Descartes' rule of signs, it is straight-forward to show that 
for any $\alpha > 0$, 
each of the polynomials above has precisely one positive real root.  The values at the endpoints 
of each $J_i$ can be determined by direct substitution.
It follows that $p_2=0$ has one solution in each of the intervals $J_1,J_2, J_3$ and $J_4$.  

\subsubsection*{Polynomial $p_2$: The case $m \in (-1,0)$}

Consider the intervals $K_1=[0,\frac 1 2], K_2=[\frac 1 2, 1], K_3=[1, \frac{m+2}{m+1}]$, and $K_4=[\frac{m+2}{m+1},\infty]$.
We show that for each $m \in (-1,0)$, $p_2$ has a root in each of these intervals. 

\begin{center}
\begin{tabular}{l|ll}
\hline
$x$-interval& \multicolumn{2}{|c}{M\"obius Transformation}\\
\hline\\[-0.1cm]
$K_1$&  $x=\frac 1 2\frac{u}{u+1}$,&$m=-\frac{1}{\alpha+1}$\\[0.15cm]
$K_2$&  $x=\frac{u+\frac 1 2 }{u+1}$,&$m=-\frac{1}{\alpha+1}$\\ [0.15cm]
$K_3$&  $x=\frac{1+\frac{m+2}{m+1}u }{u+1}$,&$ m=-\frac{1}{\alpha+1}$\\ [0.15cm]
$K_4$&  $x=u+\frac{m+2}{m+1}$,& $m=-\frac{1}{\alpha+1}$\\ 
\end{tabular} 
\end{center}

{\footnotesize\begin{align*}  
P_{K_1}= \; &(\alpha^2+2\alpha)u^4+2(\alpha^2+2\alpha)u^3-2(4\alpha^2+5\alpha+2)u^2\\
        &-2(12\alpha^2+14\alpha+4)u-4(4\alpha^2+4\alpha+1)\\
P_{K_2}=&-4u^4+4(\alpha-2)u^3+2(2\alpha^2+7\alpha-2)u^2+6(\alpha^2+2\alpha)u+(\alpha^2+2\alpha)\\
P_{K_3}= \; &4(4\alpha^4+12\alpha^3+9\alpha^2+2\alpha)u^4+8(6\alpha^4+17\alpha^3+11\alpha^2+2\alpha)u^3\\
        &+8(2\alpha^4+2\alpha^3-10\alpha^2-9\alpha-2)u^2-8(\alpha^3+5\alpha+2)u-4\alpha^2\\
P_{K_4}= \; &-16\alpha^3u^4-8(6\alpha^3+4\alpha^2)u^3-8(4\alpha^3+5\alpha^2+2\alpha)u^2\\
        &+8(2\alpha^3+5\alpha^2+2\alpha)u+4(4\alpha^3+12\alpha^2+9\alpha+2).
\end{align*}}

Using Descartes' rule of signs, it is straight-forward to show that for any $\alpha > 0$, each of the polynomials above has one positive real root. 
Again, it is traightforward to check the 
behavior at the endpoints of each interval by direct substitution.  It follows that $p_2=0$ has one solution in each of the intervals $K_1,K_2, K_3$ and $K_4$. 
\hfill $\Box$

\subsection{The variety of asymmetric configurations}

\label{section:AsymmetricCC}

In this section we study the asymmetric solutions to our problem and show that there are exactly eight asymmetric solutions
for each $m \in (-1,1)$.  We restrict to the case $\lambda' = -1$ since $\lambda' > 0$ only leads to symmetric
solutions (as shown in Section~\ref{sec:kites-lampos}.)
We first recall some definitions and theorems from algebraic geometry (see \cite{bochnak_real_1998}  and~\cite{cox_ideals_2007} for more details).

\begin{definition}
 Let $A$ be a commutative ring. 
\begin{enumerate}
\item An ideal $I$ of $A$ is said to be real if, for every sequence $a_1,\ldots, a_p$ of elements of~$A$, 
we have
\[
a_1^2+\ldots a_p^2\in I\implies a_i\in I, \mbox{ for } i=1,\ldots,p.
\]
\item $\sqrt[R]{I}$ is  the smallest real ideal of $A$ containing $I$ and is called  the real radical of the ideal~$I$.
\end{enumerate}
\end{definition}
Let $k$ be a field. If $I\subset k[x_1,\ldots,x_n]$ is an ideal, we denote by  $\V(I)$  the set 
\[
\V(I)=\{(a_1,\ldots, a_n)\in k^n:f(a_1,\ldots,a_n)=0 \mbox{ for all } f\in I\}.
\]
$\V(I)$ is an affine variety. In particular if $I=<f_1,\ldots,f_s>$, then $\V(I)=\V(f_1,\ldots,f_s)$. 
If $k$ is the field of the real numbers ${\mathbb R}$ we will say $\V(I)$ is a  real algebraic variety. 
Note that while this terminology is common in algebraic geometry books, it is different from the terminology frequently 
used in real algebraic geometry  (see~\cite{bochnak_real_1998}, for example).  
\begin{definition}
Let $V\subset k^n$ be an affine variety. Then we set
\[\I(V)=\{f\in k[x_1,\ldots,x_n]:f(a_1,\ldots a_n)=0 \mbox{ for all } (a_1,\ldots a_n)\in V\}.\]
$\I(V)$ is an ideal and it is called the ideal of the variety $V$.
\end{definition}
We are now ready to state a version of the Real Nullstellensatz:
\begin{theorem}[Real Nullstellensatz]
Let $k$ be a real closed field and $I$ an ideal of $k[x_1,\ldots,x_n]$. Then $\I(\V(I))=\sqrt[R]{I}$.
\end{theorem}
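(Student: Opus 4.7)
The plan is to prove the two inclusions $\sqrt[R]{I}\subseteq \I(\V(I))$ and $\I(\V(I))\subseteq \sqrt[R]{I}$ separately, using the concrete description of the real radical: $\sqrt[R]{I}$ equals the set of $f\in k[x_1,\ldots,x_n]$ such that $f^{2m}+g_1^2+\cdots+g_p^2\in I$ for some nonnegative integer $m$ and some polynomials $g_1,\ldots,g_p$. I would first establish this description as a lemma (it is a straightforward transfinite-construction/Zorn argument mirroring the proof that the ordinary radical is the intersection of prime ideals), and then both inclusions become tractable.

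The easy direction $\sqrt[R]{I}\subseteq\I(\V(I))$ is immediate from this description: if $f^{2m}+\sum g_i^2\in I$ and $x\in \V(I)$, then evaluation yields $f(x)^{2m}+\sum g_i(x)^2=0$ in $k$; since $k$ is real closed (hence formally real), a sum of squares in $k$ vanishes only if each summand vanishes, forcing $f(x)=0$.

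The hard direction $\I(\V(I))\subseteq\sqrt[R]{I}$ I would prove by contrapositive. Suppose $f\notin\sqrt[R]{I}$; I must produce an $x\in\V(I)$ with $f(x)\neq 0$. Form the residue ring $B=k[x_1,\ldots,x_n]/I$ and its localization $A=B[\bar f^{-1}]$. The assumption $f\notin\sqrt[R]{I}$ translates, via the lemma above, into the statement that $-1$ is not a sum of squares in $A$, i.e.\ $A$ is a formally real $k$-algebra. By Zorn's lemma and the classical theory of orderings on formally real rings, the ordering of $k$ extends to a total ordering on (the field of fractions of a prime quotient of) $A$. At this point I invoke the Artin--Lang homomorphism theorem: a finitely generated $k$-algebra that admits an ordering extending that of the real closed field $k$ admits a $k$-algebra homomorphism into $k$. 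Applied to $A$, this yields $\varphi\colon A\to k$; the point $x=(\varphi(\bar x_1),\ldots,\varphi(\bar x_n))\in k^n$ lies in $\V(I)$ (because every element of $I$ dies in $B$ and hence in $A$) and satisfies $f(x)\neq 0$ (because $\bar f$ was inverted in $A$, so $\varphi(\bar f)$ is a unit in $k$).

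The main obstacle is the Artin--Lang theorem itself, which is the deep content of the argument; its proof rests on the Tarski--Seidenberg transfer principle (equivalently, quantifier elimination for the theory of real closed fields), so most of the work is already packaged there. Everything else amounts to standard commutative algebra (localization, formally real rings, Zorn's lemma for extending orderings). For a self-contained write-up one could instead deduce the Real Nullstellensatz from the Positivstellensatz, whose proof similarly concentrates all the difficulty into the transfer principle; either route is acceptable, and the choice is a matter of exposition rather than substance.
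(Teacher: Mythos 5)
The paper does not prove this theorem at all: it is quoted as a classical result of real algebraic geometry (the reference is~\cite{bochnak_real_1998}) and used as a black box to compute $\I(\V(I))$ via the \texttt{realrad} routines in {\sc Singular}. So there is no ``paper proof'' to compare against; what you have written is essentially the standard proof from the cited literature, and it is correct in outline. The inclusion $\sqrt[R]{I}\subseteq\I(\V(I))$ via the concrete description $\sqrt[R]{I}=\{f: f^{2m}+g_1^2+\cdots+g_p^2\in I\}$ and formal reality of $k$ is fine, and the hard direction is the classical Artin--Lang argument: localize $B=k[x_1,\ldots,x_n]/I$ at $\bar f$, observe that $f\notin\sqrt[R]{I}$ is equivalent to $-1$ not being a sum of squares in $A=B[\bar f^{-1}]$ (here one should do the localization bookkeeping carefully --- an identity $-1=\sum(b_i/\bar f^{s})^2$ in $A$ only gives $\bar f^{u}(\bar f^{2s}+\sum b_i^2)=0$ in $B$, and one multiplies by a further power of $\bar f$ to make the exponent even before concluding $f^{2(s+u)}+\sum(f^u b_i)^2\in I$), extend the preordering of sums of squares to an ordering with prime support, pass to the real closure of the ordered residue field (unique ordering on $k$ guarantees compatibility), and invoke Artin--Lang to produce $\varphi\colon A\to k$, whence the Rabinowitsch-style conclusion $x\in\V(I)$, $f(x)\neq 0$. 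Two minor quibbles: your characterization lemma for $\sqrt[R]{I}$ is usually proved by a direct computation showing that the displayed set is itself a real ideal (closure under addition and the realness check require explicit square-manipulation tricks), not by a Zorn/transfinite construction --- Zorn enters only later, when extending the preordering to an ordering; and you should note explicitly that $f\notin\sqrt[R]{I}$ forces $A\neq 0$, so that $k$ embeds in $A$ and the ordering machinery applies. Neither affects the validity of the argument.
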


The dimension of an affine variety $V\subset k^n$, denoted $\mbox{dim}V$, is the degree
of the affine Hilbert polynomial of the corresponding ideal $\I(V ) ⊂ k[x_1,\ldots,x_n ]$ 
(see \cite{cox_ideals_2007} for more details). This degree can be easily computed using {\sc Singular}.
In the case $k={\mathbb R}$ one needs to know the ideal of the variety, or by the Real Nullstellensatz, the real radical of the ideal.
The real radical  can be computed using the {\tt realrad.lib} \cite{spang_tt_2011} library of {\sc Singular} \cite{decker_singular_2011}. 
More details about the algorithms can be found in the paper \cite{spang_zero-dimensional_2008}.
We now apply the above theory to our problem.

\begin{theorem}
Consider the ring ${\mathbb R}[s_{12},s_{13},s_{14},s_{23},s_{24},s_{34},m]$. The asymmetric relative equilibria 
configurations form a one-dimensional real variety $V\subset {\mathbb R}^7$.
\end{theorem}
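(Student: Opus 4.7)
The plan is to work with the ideal underlying the saturated Gr\"obner basis $\widetilde G_{I_s}$ constructed in Section~\ref{sec:elim-symm}. Denote this ideal by $\widetilde I_s\subset {\mathbb R}[s_{12},s_{13},s_{14},s_{23},s_{24},s_{34},m]$. By construction, saturating with respect to $s_{13}$, $s_{14}$, $s_{24}$ rules out collisions, and saturating with respect to each difference among $\{s_{13},s_{14},s_{23},s_{24}\}$ eliminates the symmetric configurations by Lemma~\ref{lemma:symm} and equation~(\ref{eq:symm1}). Consequently ${\bf V}(\widetilde I_s)\cap {\mathbb R}^7$ is exactly the set $V$ of strictly planar asymmetric relative equilibria with $\lambda'=-1$, $\Gamma_1=\Gamma_2=1$, $\Gamma_3=\Gamma_4=m$.

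First I would verify that $V$ is non-empty and contains at least a one-parameter family. Theorem~\ref{theo:convexity}, combined with Lemma~\ref{lemma:roots}, already produces, for every $m\in (-1,1)\setminus\{0\}$, genuine real positive solutions (concave when $m\in (0,1)$, convex when $m\in (-1,0)$). Moreover, the values $s_{12}=1/(m+1)$, $s_{34}=(m+2)/(m+1)$ are rational functions of $m$, and $s_{13},s_{14},s_{23},s_{24}$ must be a choice of four distinct real roots of $p_2(x,m)$; this choice varies continuously with $m$ because $p_2$ has four distinct real roots on $(-1,1)\setminus\{0\}$ by Lemma~\ref{lemma:roots}. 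Hence $V$ contains at least one continuous one-parameter branch, so $\dim V\geq 1$.

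To obtain the upper bound I would apply the Real Nullstellensatz: $\mathcal I(V)=\sqrt[R]{\widetilde I_s}$, so $\dim V$ is the degree of the affine Hilbert polynomial of $\sqrt[R]{\widetilde I_s}$. The real radical is computed with the \texttt{realrad.lib} library of \textsc{Singular}~\cite{spang_tt_2011,decker_singular_2011}, following the algorithms in~\cite{spang_zero-dimensional_2008}, and the affine Hilbert polynomial is then read off directly in \textsc{Singular}. I expect (and would report) that this polynomial has degree one. As an internal consistency check, specializing $\widetilde I_s$ at any fixed $m\in (-1,1)\setminus\{0\}$ produces an ideal whose variety consists of exactly eight points (corresponding to the eight orderings of the four distinct roots of $p_2$ compatible with the geometric constraints), confirming that $V\to (-1,1)$ via projection onto the $m$-axis is an eight-sheeted covering, which is geometrically a one-dimensional object.

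The main obstacle is the practical cost and the correctness check of the real radical computation: $\widetilde I_s$ lives in seven variables with coefficients of substantial degree in $m$, and \texttt{realrad.lib} can be slow or produce large output. If a direct call is infeasible, I would localize the computation by first eliminating $s_{12}$ and $s_{34}$ using the explicit rational expressions from equation~(\ref{sols-s12s34}), reducing the problem to an ideal in ${\mathbb R}[s_{13},s_{14},s_{23},s_{24},m]$ whose saturated, real radicalized form should still define a one-dimensional variety, and then apply the Real Nullstellensatz in that smaller ring. Combined with the explicit lower bound $\dim V\geq 1$ from the parametric family above, this yields $\dim V=1$.
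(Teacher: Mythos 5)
Your proposal follows essentially the same route as the paper: saturate to remove collisions and symmetric solutions, invoke the Real Nullstellensatz so that $\dim V$ is the degree of the affine Hilbert polynomial of the real radical, and compute that real radical with \texttt{realrad.lib} in {\sc Singular}. The one concrete difference is that the paper performs an additional saturation, with respect to $(m s_{12} - 2m + s_{12} - 1)$, which removes the entire $s_{12} = (2m+1)/(m+1)$ branch (the one governed by $p_1$); this produces nine explicit generators $f_1,\dots,f_9$ for which the computation shows $\sqrt[R]{I} = I$, so the real radical step is immediate and the Hilbert polynomial has degree one. That extra saturation is precisely the remedy for the feasibility obstacle you flag at the end: your ideal $\widetilde I_s$ retains the $p_1$-branch, which has no real points for $m \in (-1,1)$ but \emph{does} have real positive points for $|m| > 1$ (the roots of $p_1$ at parameter $m$ are the roots of $p_2$ at $1/m$), so $\sqrt[R]{\widetilde I_s} \neq \widetilde I_s$ in general and the real radical computation is genuinely harder --- your proposed elimination of $s_{12}, s_{34}$ via~(\ref{sols-s12s34}) is a reasonable substitute, but the paper's single saturation is cleaner. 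Two slips in your consistency check: in mutual-distance coordinates each fiber over $m \in (-1,1)\setminus\{0\}$ contains \emph{four} points, not eight, since reflected solutions have identical $s_{ij}$ (Theorem~\ref{theorem:asymm-cc} counts ``four, eight counting reflected solutions''); and without the extra saturation the fibers of $\V(\widetilde I_s)$ over $|m|>1$ are nonempty, so the projection is not an eight-sheeted covering of $(-1,1)$. Neither slip affects the dimension count. Your explicit lower-bound argument ($\dim V \geq 1$ via the continuous family in $m$) is a sensible safeguard the paper omits, since the Hilbert polynomial computation already pins the dimension exactly; it buys you robustness against a failed or untrusted radical computation at the cost of redundancy.
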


\begin{proof}
Consider the Gr\"obner basis $G_{I_s}$ for the ideal $I_s$  and saturate with respect to $s_{13}, s_{14}, s_{24},$
$(s_{13}-s_{24})$, $(s_{14}-s_{23})$, $(s_{13}-s_{14})$, $(s_{23}-s_{24})$, $(s_{13}-s_{23})$, and $(s_{14}-s_{24})$, as before.
In addition, saturate with respect to $(m s_{12} - 2m + s_{12} - 1)$ to eliminate the 
case $s_{12} = (2m+1)/(m+1)$ for which the solutions are complex.  We obtain the following polynomials:
\[\begin{split}
&f_1=s_{13} + s_{14} + s_{23} + s_{24} - 2s_{34} - 1\\
&f_2= s_{12} - s_{34} + 1\\
&f_3= s_{34}m  + s_{34} - m -2\\ 
&f_4=2s_{24}^2 - s_{14}s_{34} - s_{23}s_{34} - 4s_{24}s_{34} + 2s_{34}^2 + 2s_{23}\\
&f_5= 2s_{23}s_{24}- 2s_{23} - 2s_{24} + s_{34}\\
&f_6= 2s_{14}s_{24} - s_{34} \\
&f_7=2s_{23}^2 + s_{14}s_{34} - 3s_{23}s_{34} +2s_{24} - s_{34}\\
&f_8= 2s_{14}s_{23} - s_{14}s_{34} - s_{23}s_{34} + s_{34}\\
&f_9=2s_{14}^2 - 3s_{14}s_{34} +s_{23}s_{34} - 2s_{14} - 2s_{23} - 2s_{24} + 3s_{34} + 2.
\end{split}\]
Let $I=<f_1,\ldots, f_9>$. We want to find the dimension of  the real variety $\V(I)$. 
First, we find $\I(\V(I))$ or $\sqrt[R]{I}$. 
This can be computed using the {\tt realrad.lib} \cite{spang_tt_2011} library of {\sc Singular} \cite{decker_singular_2011}.
However in this case it turns out that $\sqrt[R]{I}=I$, and hence $f_1,\ldots,f_9$ are the generators of the real radical of $I$.
The degree of the affine  Hilbert polynomial of $\sqrt[R]{I}=I$ is one, and thus $\V(I)$ is a one dimensional real algebraic variety. 
\end{proof}

We now want to see if the variety contains singular points. First we recall some definitions and theorems. 

\begin{definition}
 Let $V\subset k^n$ be an affine algebraic variety then the Zariski tangent space of $V$ at $p=(p_1,\ldots, p_n)$
denoted by $T_p(V)$ is the variety
\[
T_p(V)=\V\left(d_p(f): f\in\I(V)\right)
\]
where  $d_p(f)=\sum_{i=1}^n\frac{\partial f}{\partial x_i}(p)(x_i-p_i)$.
\end{definition}

Clearly if $\I(V)=<f_1,\ldots,f_s>$ then $T_p(V)=\V(d_p(f_1),\ldots, d_p(f_s))$, and it is the translate of a linear subspace 
of $k^n$.
Recall that if $V\subset k^n$ is an affine variety, then $V$ is irreducible if and only if ${\bf I}(V)$ is a prime ideal. 
Thus we have the following definition:

\begin{definition}\label{def:singular}
 Let $V\subset k^n$ be an irreducible variety. A point $p$ in $V$ is nonsingular (or smooth) provided that $\dim T_p(V)=\dim V$.
In other words, if $\I(V)=<f_1,\ldots,f_s>$, then $p$ is nonsingular if and only if the rank of the  Jacobian matrix
 $J=\left[ \frac{\partial f_i}{\partial x_j}\right]$ is equal to $n-\dim(V)$.
Otherwise, $p$ is a singular point of $V$.
\end{definition}
The notion of Zariski tangent space is defined even at a singular point. However, when $V\subset{\mathbb R}^n$ is an irreducible variety
and $p\in V$ is a nonsingular point, a neighbourhood of $p$ in $V$ is a $C^\infty$ submanifold of ${\mathbb R}^n$ and the tangent 
space of $V$ at $z$ (in the $C^\infty$ sense) coincide with the Zariski tangent space. If $p$ is singular, then the dimension of 
$T_p(V)$ is bigger than the dimension of $V$.

\begin{theorem}
 The variety of the asymmetric strictly planar relative equilibria configurations has no singular points and hence it is 
a smooth one-dimensional manifold. 
\end{theorem}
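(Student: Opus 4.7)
The plan is to invoke the Jacobian criterion of Definition~\ref{def:singular}. Since $\dim V=1$ and $V\subset\mathbb{R}^7$, a point $p\in V$ is nonsingular exactly when the $9\times 7$ Jacobian matrix $J=[\partial f_i/\partial x_j]$ of the generators $f_1,\ldots,f_9$ (with respect to $s_{12},s_{13},s_{14},s_{23},s_{24},s_{34},m$) has rank $6$ at $p$. Because $\dim T_p(V)\geq\dim V=1$, the rank of $J$ is automatically at most $6$ on $V$, so it suffices to prove the rank never drops below $6$. Equivalently, letting $M$ denote the ideal generated by all $6\times 6$ minors of $J$, the goal is to show that $\V(I+M)\cap\mathbb{R}^7=\emptyset$.

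To keep the computation manageable, I would first exploit the block structure of $J$. The polynomials $f_4,\ldots,f_9$ do not involve $s_{12}$, $s_{13}$, or $m$, so in the row-column ordering $(f_1,f_2,f_3,f_4,\ldots,f_9)$ and $(s_{13},s_{12},m,s_{14},s_{23},s_{24},s_{34})$, the Jacobian is block upper-triangular with upper-left diagonal block $A=\mathrm{diag}(1,1,s_{34}-1)$ coming from $f_1,f_2,f_3$. On $V$ we have $s_{34}=(m+2)/(m+1)$ with $m\neq -1$, so $\det A=s_{34}-1=1/(m+1)\neq 0$, contributing rank $3$ automatically. Thus $\mathrm{rank}\,J=6$ on $V$ if and only if the remaining $6\times 4$ block
\[
D=\left[\frac{\partial f_i}{\partial x_j}\right]_{\,i\in\{4,\ldots,9\},\;j\in\{s_{14},s_{23},s_{24},s_{34}\}}
\]
has rank $3$ at every point of $V$. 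Letting $N$ be the ideal generated by the $\binom{6}{3}\binom{4}{3}=80$ three-by-three minors of $D$, the problem reduces to showing $\V(I+N)\cap\mathbb{R}^7=\emptyset$, which by the Real Nullstellensatz is equivalent to $1\in\sqrt[R]{I+N}$.

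I would verify this last statement using the {\tt realrad.lib} package of {\sc Singular}, exactly as was used to compute $\sqrt[R]{I}=I$ in the preceding theorem. Each $f_i$ for $i\geq 4$ is a sparse quadratic whose nonlinear monomials all have the form $s_{ij}s_{kl}$, so each entry of $D$ is at most linear in the $s_{ij}$ and every $3\times 3$ minor is a polynomial of modest degree. The Gr\"obner basis computation should therefore be of a scale comparable to those already performed in the paper. Once the real radical collapses to the unit ideal, every $p\in V$ satisfies $\dim T_p(V)=1=\dim V$, identifying $V$ as a smooth one-dimensional $C^\infty$ submanifold of $\mathbb{R}^7$.

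The principal obstacle I anticipate is that the Dziobek-type structure of the system produces many algebraic dependencies among the minors, so a significant fraction of the $3\times 3$ minors will reduce to zero modulo $I$ and contribute nothing. The saturations already built into $I$ (with respect to $s_{13}-s_{24}$, $s_{14}-s_{23}$, and the other differences) ought to rule out the symmetric loci where rank would classically drop, but certifying non-vanishing of at least one minor at every point of the one-parameter family requires treating the transitional value $m=0$ (where two roots of $p_2$ coalesce at $1$) with care. Should the direct real-radical computation prove unwieldy, a fallback is to parametrize $V$ locally by $m$ via the Implicit Function Theorem applied to $f_4,\ldots,f_9$ regarded as a system in $(s_{14},s_{23},s_{24})$ with $s_{34}$ replaced by $(m+2)/(m+1)$, thereby bypassing the full minor ideal.
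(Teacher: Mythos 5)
Your proposal is correct in outline and rests on the same underlying criterion as the paper---by Definition~\ref{def:singular}, nonsingularity of $V$ amounts to the $9\times 7$ Jacobian of $f_1,\ldots,f_9$ having rank $6 = 7-\dim V$ at every point of $V$---but you verify the rank condition by a genuinely different route. The paper performs an explicit Gaussian elimination on the full Jacobian and reads off rank $6$ from the nonvanishing on $V$ of specific factors: the pivots $1,1,s_{34},2s_{24}-2$, the entry $q_1=-4(s_{23}-s_{24})(2s_{23}+2s_{24}-2s_{34}-1)$ (nonzero thanks to the saturation with respect to $s_{23}-s_{24}$), and the pair $m+1$, $s_{34}-2$ not both vanishing. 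You instead exploit the observation that $f_4,\ldots,f_9$ involve only $s_{14},s_{23},s_{24},s_{34}$; your block-triangular reduction and the identity $\mathrm{rank}\,J = 3+\mathrm{rank}\,D$ are valid, since on $V$ one has $s_{34}-1 = 1/(m+1)\neq 0$ because $f_3=0$ forces $s_{34}=(m+2)/(m+1)$ with $m\neq -1$. Reducing to $\V(I+N)\cap\mathbb{R}^7=\emptyset$ and certifying it via $1\in\sqrt[R]{I+N}$ is sound, and both you and the paper implicitly lean on the preceding theorem's computation $\sqrt[R]{I}=I$ so that $f_1,\ldots,f_9$ really generate $\I(V)$ as the Jacobian criterion requires. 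Your uniform minor-ideal formulation is arguably more robust than the paper's argument at the closure points with $m=0$, where $s_{24}=1$ (or $s_{23}=1$) makes the listed pivot $2s_{24}-2$ vanish and one must silently pass to a different minor---a degeneracy you rightly flag. What your method buys is a single machine-checkable certificate (and since $\V(I+N)$ should be zero-dimensional over $\mathbb{C}$, the real-radical algorithms the paper already cites apply); what it costs is that, as written, the decisive step is deferred to an unexecuted {\sc Singular} computation, whereas the paper exhibits its nonvanishing witnesses explicitly. One caution on your fallback: parametrizing $V$ by $m$ via the Implicit Function Theorem requires the $6\times 3$ sub-block in $(s_{14},s_{23},s_{24})$ alone to have rank $3$, i.e., regularity of the projection to the $m$-axis, which is strictly stronger than $\mathrm{rank}\,D=3$ and is not needed for smoothness, so it should not be presented as an equivalent substitute.
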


\begin{proof}
The Jacobian matrix of $(f_1,\ldots, f_9)$ is 
{ \small
\[
\begin{bmatrix}
0&1&1&1&1&-2&0\\
1&0&0&0&0&-1&0\\
0&0&0&0&0&{ m}+1&{ s_{34}}-1\\
0&0&-{ s_{34}}&-{ s_{34}}+2&4\,({ s_{24}}-{ s_{34}})&-{ s_{14}}-{ s_{23}}-4({ s_{24}}-{ s_{34}})&0\\ 
0&0&0&2\,{ s_{24}}-2&2\,{ s_{23}}-2&1&0\\ 
0&0&2\,{ s_{24}}&0&2\,{ s_{14}}&-1&0\\ 
0&0&{ s_{34}}&4\,{ s_{23}}-3\,{ s_{34}}&2&{ s_{14}}-3\,{ s_{23}}-1&0\\
0&0&2\,{ s_{23}}-{ s_{34}}&2\,{ s_{14}}-{ s_{34}}&0&-{ s_{14}}-{ s_{23}}+1&0\\
0&0&4\,{ s_{14}}-3\,{ s_{34}}-2&{ s_{34}}-2&-2&-3\,{ s_{14}}+{ s_{23}}+3&0\end {bmatrix}  
\]
}
Using Gaussian elimination we obtain the following matrix

\[
\left[
\begin{BMAT}(rc){c:c}{c:c}
\begin{BMAT}(rc){cccc}{cccc}
1 & 0&0&0\\
0 & 1&1&1\\
0 & 0&{ s_{34}}&4\,{ s_{23}}-3\,{ s_{34}}\\
0 & 0&0&2s_{24}-2
\end{BMAT}&
\begin{BMAT}(rc){ccc}{cccc}
0 & -1&0\\
1 & -2&0\\
2 & s_{14}-3s_{23}-1&0\\
2s_{23}-2 & 1&0
\end{BMAT}\\
\begin{BMAT}(rc){cccc}{ccccc}
0 & ~0~&0&~~~~~~~0~~~~~~~\\
0 & 0&0&0\\
0 & 0&0&0\\
0 & 0&0&0\\
0 & 0&0&0
\end{BMAT}&
\begin{BMAT}(rc){c}{c}
\mbox{\Huge $A$}
\end{BMAT}
\end{BMAT}
\right],
\]

where 
\[A=
\begin{bmatrix}
q_1&q_2&0\\
0&m+1&s_{34}-2\\
0&a&0\\
0&b&0\\
0&c&0\\ 
\end{bmatrix}
\]
and $a,b,c$  are polynomials that belong to the ideal of the variety (and hence  vanish on the variety), and 
\[\begin{split}
&q_1=-4(s_{23}-s_{24})(2s_{23}+2s_{24}-2s_{34}-1),\\
&q_2=-8\,{ s_{23}}\,{ s_{24}}-8\,{{ s_{24}}}^{2}+8\,{ s_{24}}\,{ s_{34}}+6
\,{ s_{24}}+4\,{ s_{23}}-4\,{ s_{34}}.\\   
\end{split}
\]
Since $s_{34}\neq 0$, $s_{24}-1\neq 0$, $q_1\neq 0$, and $m_3-1$ and $s_{34}-2$ are not both zero on the variety, it follows that
the rank of the Jacobian matrix is $6$. Since $\dim (V)=1$, by Definition~\ref{def:singular} the variety has no singular points. 
It follows that it is a smooth manifold. 
\end{proof}

 \begin{figure}[t]
 \centering
 \includegraphics[width=10 cm,keepaspectratio=true]{./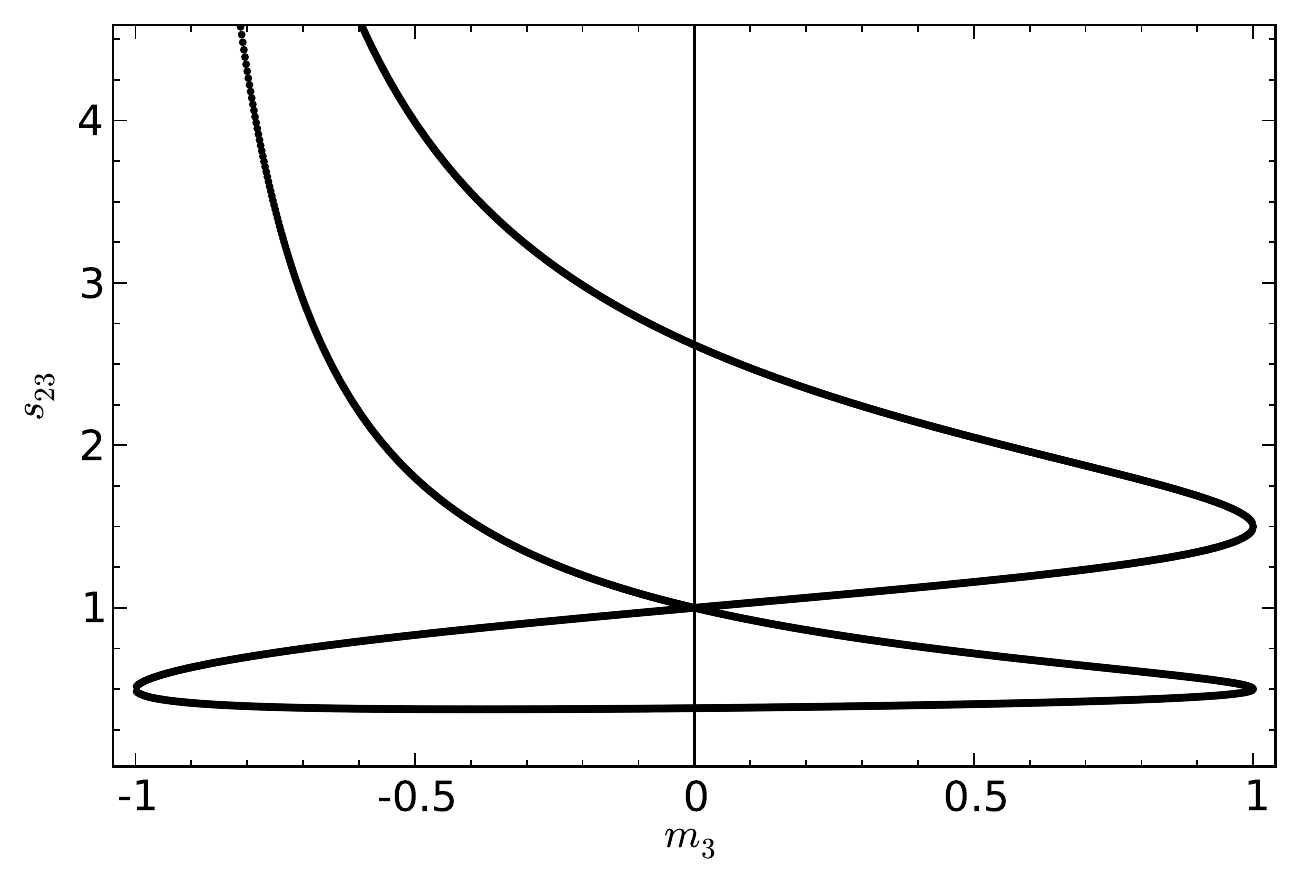}
 \caption{ Projection of part of the manifold of  asymmetric configurations onto the plane $(m,s_{23})$. 
 As $m\rightarrow -1$,  $s_{23} \rightarrow \infty$ along two branches while $ s_{23} \rightarrow \frac 1 2$
 along the other two branches. }
 \label{fig:manifold} 
\end{figure}

\begin{theorem}
 For each value of $m$ in $(-1,1)$, there are exactly four (eight counting reflected solutions) strictly planar asymmetric relative equilibria configurations. 
They are convex if $m < 0$ and concave if $m > 0$.
\label{theorem:asymm-cc}
\end{theorem}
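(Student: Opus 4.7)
My plan is to apply the Extension Theorem to the generators $\{f_1,\dots,f_9\}$ of the real radical ideal identified in the previous theorem, parametrising the asymmetric solutions by the value of $s_{23}$. The relations $f_2$ and $f_3$ already fix $s_{12}=1/(m+1)$ and $s_{34}=(m+2)/(m+1)$. From the proof of Theorem~\ref{theo:convexity}, the value $s_{23}$ must be one of the positive real roots of $p_2$, of which there are exactly four distinct ones for every $m\in(-1,1)\setminus\{0\}$ by Lemma~\ref{lemma:roots}.

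Given such a root, I would recover the remaining variables one at a time: the equation $f_8=0$ is affine-linear in $s_{14}$ with leading coefficient $2s_{23}-s_{34}$; given $s_{14}$, $f_6=0$ yields $s_{24}=s_{34}/(2s_{14})$; and $f_1=0$ then determines $s_{13}$. This chain is exactly a triple application of the Extension Theorem. The key technical point is that $2s_{23}-s_{34}\neq 0$ at every root of $p_2$, i.e.\ that $s_{34}/2$ is never a root of $p_2$ on the parameter range of interest. To see this, I would exploit a hidden involution on $p_2$: a direct coefficient comparison shows the identity $x^{4}\,p_2\bigl((s_{34}/2)/x\bigr)=(s_{34}/2)^{2}\,p_2(x)$, so the map $x\mapsto(s_{34}/2)/x$ permutes the roots of $p_2$. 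If $s_{34}/2$ were itself a root, its image $1=(s_{34}/2)/(s_{34}/2)$ would also be a root. But the clean substitution identity $p_2(1)=m^{2}$ is nonzero for $m\neq 0$, contradicting this. Hence the extension step is non-degenerate at every root.

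Thus each of the four distinct roots of $p_2$ extends uniquely to a full point of $\V(I)$, and the resulting four solutions are distinct because their $s_{23}$-coordinates are distinct. Since $\V(I)$ is defined by generators of the real radical, each such point is a genuine real planar configuration satisfying the Cayley--Menger relation. Reflection across a line preserves all mutual distances, so each solution in $s$-space lifts to exactly two labeled planar configurations, yielding the claimed count of $4\times 2 =8$ when reflections are counted separately, or $4$ otherwise. Finally, the classification as convex for $m<0$ and concave for $m>0$ is immediate from Theorem~\ref{theo:convexity}.

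The main obstacle here is the nondegeneracy of the extension step: a brute-force computation of $p_2(s_{34}/2)$ followed by factoring would work but is cumbersome and obscures what is happening. The involution argument is the cleanest route and reduces the entire question to the one-line identity $p_2(1)=m^{2}$.
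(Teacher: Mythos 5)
Your involution identity is correct and genuinely elegant: with $c = s_{34}/2 = (m+2)/(2(m+1))$ one verifies coefficient by coefficient that $x^4\,p_2(c/x) = c^2\,p_2(x)$ (this is no accident --- it is the relation $f_6$, i.e.\ $s_{14}s_{24} = s_{34}/2$, reflected in the eliminant, forcing the map $x \mapsto c/x$ to permute the four roots), and $p_2(1) = m^2$ is also right, so your nondegeneracy claim $2s_{23} - s_{34} \neq 0$ at every root of $p_2$ is sound for $m \neq 0$. Together with the necessity direction from Theorem~\ref{theo:convexity}, your chain $f_8 \to f_6 \to f_1$ does prove an \emph{upper} bound: at most one solution per root of $p_2$, hence at most four. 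But the ``exactly four'' half has a genuine gap: your construction never touches $f_4, f_5, f_7, f_9$, and nothing in the argument shows the point you build satisfies them. The Extension Theorem does not license the step as you have framed it: it requires a lex Gr\"obner basis of the elimination ideals of $I = \langle f_1,\ldots,f_9\rangle$ (the listed generators are not known to be one for your ordering), and, more importantly, it requires the partial solution to lie in $\V(I_k)$, i.e.\ that the elimination ideal $I \cap \mathbb{C}[s_{23},s_{12},s_{34},m]$ vanish at \emph{every} root of $p_2$ --- which is essentially the existence statement you are trying to prove; Theorem~\ref{theo:convexity} supplies only the converse inclusion. To close the gap you would either verify directly, by a finite polynomial computation, that $f_4, f_5, f_7, f_9$ vanish identically modulo $p_2(s_{23})$ after substituting $s_{34}=(m+2)/(m+1)$, $s_{14} = s_{34}(s_{23}-1)/(2s_{23}-s_{34})$, $s_{24} = s_{34}/(2s_{14})$ and $s_{13}$ from $f_1$, or compute the actual lex basis. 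The paper sidesteps all of this with a certified triangular decomposition ({\tt RealTriangularize}): there the quartic lands on $s_{24}$, the remaining equations form a triangular \emph{linear} system with invertible matrix, and the algorithm's guarantee that the solution sets of the original and decomposed systems coincide delivers existence for free. So you have misidentified the main obstacle --- it is compatibility of the unused equations, not the nondegeneracy of the leading coefficient.

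Two further points. First, the theorem covers $m = 0$, where $p_2$ has only three distinct roots (a double root at $1$) and your nondegeneracy fails precisely there, since at $m=0$ one has $s_{34}/2 = 1$ and $p_2(1) = 0$; the paper handles $m=0$ by a separate pair of triangular systems and still obtains four solutions, so you would need a separate argument for this value. Second, your claim that each constructed point is ``a genuine real planar configuration'' because the generators cut out the real radical does not give positivity of the recovered $s_{14}, s_{24}, s_{13}$; positivity follows instead from the fact (again requiring the full elimination structure of Theorem~\ref{theo:convexity}) that all four of $s_{13}, s_{14}, s_{23}, s_{24}$ must be roots of $p_2$, which are positive by Lemma~\ref{lemma:roots}. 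Realness of your extensions is fine, though for the simpler reason that each step is linear with nonzero real leading coefficient. The reflection count and the convex/concave classification via Theorem~\ref{theo:convexity} agree with the paper.
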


\begin{proof}
The proof consists of showing that the system of equations $f_1=0,\ldots, f_9=0$ has four solutions for each value of $m$ in $(-1,1)$.
The convexity  of the configuration then follows from Theorem~\ref{theo:convexity}.
We use an algorithm for the triangular decomposition of semi-algebraic systems. Such an algorithm, given a system of
equations and inequalities $S$, computes simpler systems $S_1, ... , S_k$
such that a point is a solution of the original system $S$ if and only if it is a solution of one of the systems $S_1, ... , S_k$. Each of these systems has a triangular shape and remarkable properties: for this reason it is called a regular semi-algebraic system and the set of the $S_1, ... , S_k$ is called a full triangular decomposition of $S$. See \cite{chen_triangular_2010} and references therein for some  background.
The algorithm  is detailed in \cite{chen_triangular_2010} and it is available in   Maple${15}^\mathrm{TM}$ via  the {\tt RealTriangularize} command of the {\tt RegularChains} package.
A full triangular decomposition of the system $f_1=0, \ldots, f_9=0$ is given below.

If $m=0$ we have the following two systems:
\begin{align*}
&s_{12}-1=0    & & s_{12}-1=0 \\
&s_{13}+s_{23}-3=0 &  &s_{13}-1=0 \\
&s_{14}-1=0     & & s_{14}+s_{24}-3=0    \\
&s_{23}^2-3s_{23}+1=0 & & s_{23}-1=0 \\
&s_{24}-1=0 & & s_{34}^2-3s_{24}+1=0\\
&s_{34}-2=0 & & s_{34}-2,
\end{align*}
if $m=1$ we have: 
\begin{align*}
&2s_{12}-1=0    & & 2s_{12}-1=0 \\
&2s_{13}-1=0    & & 2s_{13}-3=0 \\
&2s_{14}-1=0    & & 2s_{14}-3=0 \\
&2s_{23}-3=0    & & 2s_{23}-1=0 \\
&2s_{24}-3=0    & & 2s_{24}-1=0 \\
&2s_{34}-3=0    & & 2s_{34}-3=0 \\
\end{align*}
and if $m \in (-1,0)\cup(0, 1)$  we obtain:
\begin{align*}
&s_{12}-s_{34}+1=0\\
&s_{13}+s_{14}+s_{23}+s_{24}-2s_{34}-1=0\\
&s_{34}s_{14}+(s_{34}-2)s_{23}-2s_{24}^2+4s_{24}s_{34}-2s_{34}^2=0\\
&(2s_{24}-2)s_{23}-2s_{24}+s_{34}=0\\
&4s_{24}^4+(-8s_{34}-4)s_{24}^3+(4s_{34}^2+6s_{34}+4)s_{24}^2+(-4s_{34}^2-2s_{34})s_{24}+s_{34}^2=0\\
&(m+1)s_{34}-m-2=0
\end{align*}
This decomposition immediately shows that the system has no real solutions if $m = -1$, two real solutions if $m = 1$, and 
four real solutions if $m = 0$. It remains to study the case $m \in(-1,0)\cup(0, 1)$. 
Solving the last equation of the last system of the triangular decomposition yields $s_{34}=\frac{m+2}{m+1}$.
Substituting this into the preceding equation of the same system, after clearing the denominators, yields $p_2(s_{24})$. From Lemma~\ref{lemma:roots}
we know that $p_2$ has four distinct positive real roots for $m \in (-1,0) \cup (0, 1)$. If we fix one such solution for $s_{24}$ 
and substitute into the first four equations of the system, the resulting system is
$Ax=b$, where $x=(s_{12},s_{13},s_{14},s_{23})$,
\[A=
\begin{bmatrix}
 1&0&0&0\\
 0&1&1&1\\
 0&0&s_{34}&(s_{34}-2)\\
 0&0&0&2(s_{24}-1)
\end{bmatrix}
\]
and $b=[-1+s_{34},-s_{24}+2s_{34}+1,s_{24}^2-4s_{24}s_{34}+2s_{34}^2,2s_{24}-s_{34}]^T$. 
Since the matrix $A$ is invertible, $Ax=b$ has exactly one solution for every vector $b$.
This proves the theorem since it implies that the system $f_1=0,\ldots f_9=0$ has four real positive solutions.

\end{proof}
\begin{remark}
 Note that for $m=1$ we obtain two solutions which correspond to an equilateral triangle configuration with a vortex
 at the center.  These are symmetric 
configurations that have not been eliminated because in the case of equal vorticities, there is an
additional symmetry that we did not exclude when saturating. Also, note
that the polynomials $f_1,\ldots, f_9$ were obtained saturating with respect to $(ms_{12}-2m+s_{12}-1)$. This eliminated the real 
solutions of the system $f_1=0,\ldots, f_9=0$ for $m \in (-\infty,-1)\cup(1,\infty)$.  However, we know that the number of real
 (positive solutions) of the original system (e.g., before saturating) for $m \in (-\infty,-1)\cup(1,\infty)$ is the same as the number of solutions of 
$f_1=0,\ldots, f_9=0$ for $m \in (-1,1)$.
\end{remark}

\section{Symmetric Strictly Planar Relative Equilibria}
\label{section:symmetric}

In this section we investigate all possible symmetric, strictly planar, relative
equilibria in the four-vortex problem with $\Gamma_1 = \Gamma_2 = 1$ and $\Gamma_3 = \Gamma_4 = m$.
The four possible configurations are an isosceles trapezoid, a concave kite, a convex kite, or
a rhombus.   We separate the kite configurations into two cases based on the sign of
$\lambda'$.


\subsection{The isosceles trapezoid family}
\label{isos-trap}

Without loss of generality, suppose that the vortices
are ordered sequentially around an isosceles trapezoid, so that the lengths of the diagonals are given by $r_{13} = r_{24}$
and the congruent legs have length $r_{14} = r_{23}$.  Due to the symmetry of the configuration, the four vortices lie
on a common circle, that is, the isosceles trapezoid is a cyclic quadrilateral.  By Ptolemy's theorem, we have that
\begin{equation}
r_{13}^2 \; = \; r_{24}^2 \; = \; r_{12} r_{34} + r_{14}^2.
\label{eq:Ptolemy}
\end{equation}
A choice of six mutual distances satisfying $r_{13} = r_{24}$, $r_{14} = r_{23}$ and equation~(\ref{eq:Ptolemy}) will
also satisfy $e_{CM} = 0$.

In contrast to the Newtonian four-body problem, the isosceles trapezoid family
of four-vortex relative equilibria can be solved completely (by hand) in terms of the vortex strength $m$ 
(compare with~\cite{cors_co-circular_2012}).
Let $x = r_{34}/r_{12}$ and $y = r_{14}/r_{12}$.  Rather than assuming a specific scaling on the distances (e.g., $\lambda' = -1$), 
we will solve for $x$ and $y$ in terms of $m$.  Using the symmetries of the configuration and equation~(\ref{eq:Ptolemy}),
equation~(\ref{eq:CCfactor}) reduces to
\begin{equation}
2(y^2 - x^2)(y^2 - 1) + x(2y^2 - x^2 - 1) \; = \; 0.
\label{eq:Trapxy}
\end{equation}
This relation between $x$ and $y$ is both necessary and sufficient for the trapezoid to be a relative equilibrium.

Focusing on the first and last columns of equations~(\ref{eqDziobek2}), the first and last equations are easily satisfied due to symmetry.  
The middle four equations in~(\ref{eqDziobek2}) 
are equivalent given that~(\ref{eq:Trapxy}) holds.  They determine a formula for~$m$ given by
\begin{equation}
m \; = \;  \frac{x(1-y^2)}{y^2 - x^2},
\label{eq:TrapMass}
\end{equation}
where we have used $|A_i| = r_{jk} r_{jl} r_{kl}/(4 r_c)$ for the area of a triangle circumscribed
in a circle of radius $r_c$.  Solving equation~(\ref{eq:TrapMass}) for $y^2$ and substituting
into equation~(\ref{eq:Trapxy}) leads to the equation
\begin{equation}
x(1-x)(1+x)(x^2(2m+1) - m(m+2)) \; = \; 0.
\label{eq:TrapXSol}
\end{equation}

Since $x$ represents the ratio of two distances, there are only two possibilities.  If $x=1$, then equation~(\ref{eq:Trapxy}) quickly gives $y=1$ and the configuration
is a square.  The second equation in~(\ref{eqDziobek2}) then gives that $m=1$ and all vortices must have the same strength.  The other possibility
is that $x^2 = m(m+2)/(2m+1)$.  Substituting this value into equation~(\ref{eq:TrapMass}) and solving for $y^2$ yields
$$
y^2 \; = \;  \frac{1}{2} \left( m+2  - \sqrt{ \frac{m(m+2)}{2m + 1} } \, \right).
$$
Note that the necessary condition $x^2 > 0$ is satisfied only for $m > 0$ and $-2 < m < -1/2$.  In order for
the expression for $y^2$ to be real, we must be in one of these two regimes.  However, while $y^2 > 0$ holds
for $m > 0$, it is not satisfied on $-2 < m < -1/2$.  Therefore, there does {\em not} exist an isosceles trapezoid
solution for the case $m < 0$.   

\begin{theorem}
There exists a one-parameter family of isosceles trapezoid relative equilibria with vortex strengths
$\Gamma_1 = \Gamma_2 = 1$ and $\Gamma_3 = \Gamma_4 = m$.  The vortices 1 and~2 lie on one
base of the trapezoid, while 3 and~4 lie on the other.  Let $\alpha = m(m+2)/(2m+1)$.  
If $r_{13} = r_{24}$ are the lengths of the
two congruent diagonals, then the mutual distances are described by
$$
\left( \frac{r_{34}}{r_{12}} \right)^2 \; = \;  \alpha, \quad 
\left( \frac{r_{14}}{r_{12}} \right)^2 \; = \;  \frac{1}{2} \left( m+2  - \sqrt{ \alpha } \, \right)
$$
$$
\mbox{ and} \quad
\left( \frac{r_{13}}{r_{12}} \right)^2 \; = \;  \frac{1}{2} \left( m+2  + \sqrt{ \alpha } \, \right).
$$
This family exists if and only if $m > 0$.  The case $m=1$ reduces to the square.  For $m \neq 1$, 
the larger pair of vortices lie on the longest base.
\end{theorem}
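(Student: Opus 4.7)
The bulk of the work has been carried out in the discussion leading up to the theorem; what remains is to (i) extract the solutions from the factored equation~(\ref{eq:TrapXSol}), (ii) check reality and positivity to obtain the constraint $m>0$, (iii) recover the diagonal length from Ptolemy, and (iv) confirm the assertion about which base is longest. I would proceed as follows.

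\emph{Extracting the distances.} Starting from the substitution $x = r_{34}/r_{12}$, $y = r_{14}/r_{12}$, I would use the factorization~(\ref{eq:TrapXSol}) already derived. The root $x=0$ is discarded as a ratio of positive distances, $x=-1$ is discarded by positivity, and $x=1$ forces $y=1$ via~(\ref{eq:Trapxy}), yielding the square with $m=1$ (as verified via the second equation of~(\ref{eqDziobek2})). The remaining factor gives $x^2 = m(m+2)/(2m+1) = \alpha$. Substituting this value into the mass formula~(\ref{eq:TrapMass}) and solving the resulting linear equation for $y^2$ yields $y^2 = \tfrac{1}{2}(m+2-\sqrt{\alpha})$. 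Ptolemy's identity~(\ref{eq:Ptolemy}), in the normalized form $r_{13}^2/r_{12}^2 = x + y^2$, then produces $r_{13}^2/r_{12}^2 = \tfrac{1}{2}(m+2+\sqrt{\alpha})$ after simplification using $x = \sqrt{\alpha}$.

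\emph{Existence range.} The condition $x^2 = \alpha > 0$ requires either $m > 0$ or $-2 < m < -1/2$. The real obstacle, and what I expect to be the only subtle step, is showing that $y^2 > 0$ fails on the interval $(-2,-1/2)$. I would argue via the identity
\[
(m+2)^2(2m+1) - m(m+2) \;=\; 2(m+2)(m+1)^2,
\]
which is positive for $m > -2$, $m \neq -1$. When $m > 0$, dividing by $2m+1 > 0$ gives $(m+2)^2 > \alpha$, hence $y^2 > 0$; when $-2 < m < -1/2$, dividing by $2m+1 < 0$ reverses the inequality, forcing $y^2 < 0$ and ruling out a geometric realization. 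This confines the family to $m > 0$.

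\emph{Geometric realizability and final claim.} I would then verify the planarity constraint automatically: the choices of $r_{13}^2, r_{14}^2, r_{34}^2$ are built from Ptolemy, so $e_{CM} = 0$ holds by construction, and positivity of all six squared distances is assured. Finally, for the claim that the larger-strength pair sits on the longer base, note that $x^2 = \alpha < 1$ is equivalent to $m(m+2) < 2m+1$, i.e., $m^2 < 1$. Thus for $0 < m < 1$ we get $r_{34} < r_{12}$, so vortices $1,2$ (of strength $1 > m$) lie on the longer base, and the symmetric analysis $m \leftrightarrow 1/m$ handles $m > 1$; equality $m=1$ recovers the square. This completes the classification stated in the theorem.
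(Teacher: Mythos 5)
Your proposal is correct and follows essentially the same route as the paper: the same extraction of $x^2=\alpha$ from equation~(\ref{eq:TrapXSol}), the same recovery of $y^2$ from~(\ref{eq:TrapMass}) and of $r_{13}^2$ from Ptolemy's relation~(\ref{eq:Ptolemy}), and the same observation $\alpha>1 \iff m>1$ for the longest-base claim. The only substantive difference is that your identity $(m+2)^2(2m+1)-m(m+2)=2(m+2)(m+1)^2$ makes explicit the sign analysis that the paper merely asserts when ruling out $-2<m<-1/2$ (a welcome addition; just note that at $m=-1$ it yields $y^2=0$, a collision, rather than $y^2<0$, with the same conclusion of non-realizability).
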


\begin{proof}
The formula for $r_{13}^2/r_{12}^2$ comes from equation~(\ref{eq:Ptolemy}).  Substituting $m=1$ into the formulas above
quickly gives $r_{12} = r_{14} = r_{34} = r_{23} $ and $r_{13} = r_{24} = \sqrt{2} \, r_{12}$, a square.  
The only other item that remains to be shown
is the fact about the larger vortices lying on the longer base.  This follows since $\alpha > 1$ if and only
if $m > 1$.
\end{proof}

\begin{remark}
\begin{enumerate}

\item  If $0 < m < 1$, $r_{12}$ is the longest base length and the 
formulas in the theorem give $r_{34} < r_{14} < r_{12} < r_{13}$.
On the other hand, if $m > 1$, then $r_{34}$ is the longest base length and we deduce
$r_{12} < r_{14} < r_{34} < r_{13}$.  Both cases agree with the conclusion
of statement~1 in Proposition~\ref{prop:convex}.

\item  The case $m > 1$ is identical to the case $0 < m < 1$ through a rescaling of the vortex strengths
and interchanging bodies 1 and~3, and bodies 2 and~4.  Specifically, replacing $m$ by $1/m$ and
interchanging distances $r_{12}$ and $r_{34}$ leaves the three equations for the ratios of mutual distances
unchanged.  The vortex strengths map to $\Gamma_1 = \Gamma_2 = 1$ and $\Gamma_3 = \Gamma_4 = 1/m < 1$
under this transformation.

\item  As $m \rightarrow 0$, $\alpha \rightarrow 0$ and consequently $r_{34} \rightarrow 0$.  The limiting configuration is
an equilateral triangle with vortices 3 and~4 colliding.  As $m$ increases, so does $\alpha$, and the smaller base length
$r_{34}$ approaches the larger one $r_{12}$ as $m \rightarrow 1$.  The ratio of the diagonal to the larger base also increases monotonically
with~$m$.  However, the ratio of the leg to the larger base begins at $1$ ($m=0$) and decreases to a minimum value of 
approximately $0.904781$ before increasing back to $1$ at the square configuration.  This minimum value occurs
at $m \approx 0.234658$, which is the only positive root of the quartic $8m^4 + 19m^3 + 9m^2 +m - 1$.
Just as with the Newtonian case (see~\cite{cors_co-circular_2012}), the range of $y$ is surprisingly small, confined to the interval
$[0.904781,1]$.

\end{enumerate}
\end{remark}

\subsection{Kite configurations: $\lambda' < 0$}
\label{section:concave-kites}

In this section, we consider kite central configurations when $\lambda' < 0$.
Such configurations contain two vortices that are 
symmetrically located with respect to an axis of symmetry while the remaining two vortices lie on the axis of symmetry.  
The configuration formed by the vortices can either be concave or convex. 
In this section, we focus on configurations with only one line of symmetry. 
Those configurations with two lines of symmetry, i.e., the rhombus configurations, 
are studied in detail in Section~\ref{rhombus_sec}.

Our goal is to count the number of kite configurations as~$m$ varies
and describe the type of possible configurations.   
Recall that $\mathcal{\widetilde F}, \mathcal{\widetilde G}, \tilde e_{CM}$ and
$\mathcal{\widetilde H}$ represent the Albouy-Chenciner equations, the unsymmetrized 
Albouy-Chenciner equations, the Cayley-Menger determinant and the
Dziobek equations, respectively, in terms of the variables $s_{ij} = r_{ij}^2$, 
with $\lambda' = -1$.  Denote the complete system of these equations 
as~$\mathcal{\widetilde E}$.  Our goal is to prove the following theorem.

 \begin{theorem}
 \label{thm:concavekites}
 For each value of $m\in (-\infty,-2) \cup (-\frac 1 2,0) \cup (0, \infty)$, 
 system $\mathcal{\widetilde E}$ has exactly four (real, positive) solutions that correspond to  symmetric
 kite configurations. If $m\in (0,\infty)$, such configurations are concave. If $m\in (-\frac 1 2,0)$, then there are two convex 
 configurations with $\Gamma_3$ and $\Gamma_4$ on the axis of symmetry, and two concave ones with $\Gamma_1$ 
 and $\Gamma_2$ on the axis of symmetry.  If $m\in(-\infty, -2)$, the situation is reversed and there are two concave 
 configurations with $\Gamma_3$ and $\Gamma_4$ on the axis of symmetry, and two convex ones with $\Gamma_1$ 
 and~$\Gamma_2$ on the axis of symmetry.
There are no other strictly kite configurations with $\lambda' < 0$ for other values of $m$.
\end{theorem}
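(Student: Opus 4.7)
For $m \neq 1$, the symmetry relation~\eqref{eq:symm1} forces any kite configuration into one of two types: $\mathrm{Kite}_{34}$, with axis of symmetry through vortices $3$ and $4$ (characterized by $s_{13}=s_{14}$ and $s_{23}=s_{24}$); or $\mathrm{Kite}_{12}$, with axis through $1$ and $2$ (characterized by $s_{13}=s_{23}$ and $s_{14}=s_{24}$). The plan is to analyze $\mathrm{Kite}_{34}$ in detail and deduce the $\mathrm{Kite}_{12}$ count via the duality $m\mapsto 1/m$ combined with the label swap $(1,2)\leftrightarrow(3,4)$: this exchanges the two types while mapping the parameter set $(-\infty,-2)\cup(-1/2,0)\cup(0,\infty)$ bijectively to itself (with $(-\infty,-2)\leftrightarrow(-1/2,0)$ and $(0,\infty)$ fixed). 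The sum $2+2=4$ is then exactly the content of the theorem.

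First I would adjoin to the ideal generated by $\mathcal{\widetilde E}$ (with $\lambda'=-1$) the two kite relations $s_{13}-s_{14}$ and $s_{23}-s_{24}$, and then saturate with respect to $s_{13}-s_{23}$ to exclude the rhombus subfamily (treated separately in Section~\ref{rhombus_sec}) and with respect to each distance variable to rule out collisions. Computing a lex Gr\"obner basis with an elimination order that isolates $s_{34}$ last, I expect a univariate polynomial $P(s_{34};m)$ together with basis elements linear in each of the remaining $s_{ij}$ whose leading coefficients (in the eliminated variable) do not vanish on the relevant $m$-intervals. The Extension Theorem then lifts every positive real root of $P$ uniquely to a full real solution of~$\mathcal{\widetilde E}$.

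Next I would show that $P(s_{34};m)$ has exactly two positive real roots for each $m\in(-\infty,-2)\cup(-1/2,0)\cup(0,\infty)$ and none otherwise. Depending on the degree of $P$, this step uses Lemma~\ref{lemma:quartic-roots} if $P$ reduces to a quartic, or a direct Sturm/Descartes argument (possibly after a M\"obius change of variable) in higher degree. For each root I would then extract the full distance tuple from the basis and apply Propositions~\ref{prop:concave} and~\ref{prop:convex} with threshold $1/\sqrt{-\lambda'}=1$ to classify the geometry: for $m\in(0,\infty)$ the $\mathrm{Kite}_{34}$ solutions are concave; for $m\in(-1/2,0)$ they are convex, with the two negative-strength vortices on the axis; and for $m\in(-\infty,-2)$ they are again concave. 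The stated classification of $\mathrm{Kite}_{12}$ then follows by invoking the duality, which also confirms that no solutions exist outside the three intervals.

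The main obstacle will be controlling the boundary behavior at $m=-1/2$ and $m=-2$, where the leading coefficient of $P$ is expected to vanish and branches of solutions escape to infinity (the introduction records that the $\mathrm{Kite}_{12}$ family limits on an equilateral triangle with its apex vortex at infinity as $m\to -1/2^-$, and by duality the same occurs for $\mathrm{Kite}_{34}$ as $m\to -2^+$). Verifying that solutions exist on each of the three intervals but disappear on the complementary range $[-2,-1/2]$ requires careful analysis of the discriminant of $P$ and the signs of its coefficients on each subinterval, together with the triangle-inequality and Cayley--Menger constraints that distinguish genuine planar realizations from three-dimensional tetrahedra.
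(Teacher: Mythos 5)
Your reduction of the $\mathrm{Kite}_{12}$ case to the $\mathrm{Kite}_{34}$ case via $m \mapsto 1/m$ together with the relabeling $(1,2)\leftrightarrow(3,4)$ is sound, and it is in fact exactly the device the paper uses: in the proof of Lemma~\ref{lemma:concavekites1}, the transformation $(m,s_{12},s_{13},s_{23},s_{34}) \mapsto (\frac{1}{m},s_{34},s_{14},s_{13},s_{12})$ carries the $\mathrm{Kite}_{12}$ system onto the system $T_5$ analyzed in Lemma~\ref{lemma:concavekites}. However, the quantitative core of your plan is wrong. You assert that the $\mathrm{Kite}_{34}$ eliminant $P(s_{34};m)$ has exactly two positive real roots on \emph{each} of the three intervals, each lifting \emph{uniquely} to a full solution, so that the theorem's count of four arises as $2+2$ across the two kite types everywhere. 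The actual structure is different: for $m \in (0,1)$ the $\mathrm{Kite}_{34}$ system alone has \emph{four} positive solutions and $\mathrm{Kite}_{12}$ has \emph{none}, while for $m \in (1,\infty)$ the situation is reversed; only on $(-\infty,-2)\cup(-\frac{1}{2},0)$ does the count split as $2+2$. In the paper's system $T_5$, the variable $s_{34}$ satisfies a quadratic $q_1$ whose two roots $\beta_1,\beta_2$ are \emph{both} positive and admissible when $m \in (0,1)$, but only $\beta_2$ survives the positivity constraints on the negative intervals (for $m\in(-\frac12,0)$ one has $\beta_1<0$; for $m<-2$ the root $\beta_1$ forces $s_{12}<0$). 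So "exactly two positive roots for each $m\in(0,\infty)$" is false for any eliminant that faithfully encodes solvability, and your duality step would then misdistribute the solutions: it would predict two $\mathrm{Kite}_{12}$ solutions for $m\in(0,1)$, where there are none.

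The second, related gap is the unique-lifting claim. There cannot be Gr\"obner basis elements linear in $s_{13}$ and $s_{14}$ over $\mathbb{C}[s_{34},m]$ with nonvanishing leading coefficients, because $s_{13}$ and $s_{14}$ are the two roots of a \emph{common} quadratic (in $T_5$, the polynomial $q_2$; the resultant computation in the paper shows $s_{13}$ must be the root of $q_2$ complementary to $s_{14}$). Swapping these two values corresponds to interchanging vortices 3 and~4, which changes the mutual-distance tuple and is counted as a distinct solution; hence each admissible value of $s_{34}$ lifts $2$-to-$1$, not uniquely. With the correct bookkeeping (two values of $s_{34}$, each lifting twice, for $m\in(0,1)$; one value lifting twice on the negative intervals; duality for $\mathrm{Kite}_{12}$) the total of four is recovered, but your proposed extension-theorem argument as stated would undercount by a factor of two. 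A smaller caveat: for $m>0$ the paper does not classify the solutions as concave via Propositions~\ref{prop:concave} and~\ref{prop:convex} alone, but invokes the symmetry theorem of Albouy et al.\ (a convex configuration with $m>0$ must be a rhombus, excluded by the saturation with respect to $s_{13}-s_{14}$); if you want to avoid that external result you would need explicit distance bounds beyond what your sketch provides.
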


\begin{remark}

\begin{enumerate}
\item  As we show below, the case $m=1$ is considerably different from the Newtonian four-body problem.  In fact, 
 it is known for the Newtonian case that there are two distinct symmetric kite configurations for four equal masses:
 an equilateral triangle with a mass in the center and an isosceles triangle with a mass on the axis of symmetry~\cite{albouy_symmetric_1996}. 
 Instead, for four vortices of equal strength, the only concave kite central configuration is an equilateral triangle 
 with a vortex at the center of the triangle.

\item  Based on this theorem and Theorem~\ref{theorem:asymm-cc}, we conclude that there are {\em no} concave
relative equilibria when $m \in (-1,-1/2)$.  This is another contrast with the Newtonian four-body problem, where it is
shown in~\cite{hampton_2002} that for any choice of four positive masses, there exits a concave central configuration.

\item  The exact count on the number of concave symmetric solutions given in Table~\ref{table:count} is precisely
twice the values expressed in the theorem.  This is due to the fact that each solution found can be reflected about the axis
of symmetry, a transformation that does not change the mutual distances $r_{ij}$, but does alter the positions $x_i$.

\end{enumerate} 
\end{remark}

We have two possibilities, either vortices $3$ and~$4$ lie on the axis  of symmetry or vortices $1$ and~$2$ do.
It is convenient to study the two cases separately, since the conditions imposed by the symmetry are different.
The first case will be explored in Lemma~\ref{lemma:concavekites}, and the second in Lemma~\ref{lemma:concavekites1}.
The proof of Theorem~\ref{thm:concavekites} follows immediately from the two lemmas.

Let us consider the first case. If vortices $3$ and~$4$ lie on the axis of symmetry, then $s_{23}=s_{13}$ and $s_{24}=s_{14}$.
We compute a Gr\"obner basis for the ideal $\langle \mathcal{\bar F},\mathcal{\bar G},\bar  e_{CM},\bar {H}\rangle$,
where $ \mathcal{\bar F},\mathcal{\bar G},\bar  e_{CM},\bar {\mathcal H}$ are obtained from 
 $ \mathcal{\widetilde F},\mathcal{\widetilde G},\widetilde e_{CM}$ and $\mathcal{\widetilde H}$, respectively,
by imposing the conditions  $s_{23}=s_{13}$ and $s_{24}=s_{14}$.
Then we saturate with respect to
$s_{13}-s_{14}$, in order to exclude the rhombus configurations. We also saturate with respect to 
$s_{12},s_{13},s_{14}, s_{34}$ and $m$. This computation yields the following ten polynomials:
\begin{align*}
g_1 = & s_{34} m + s_{13} + s_{14} + s_{34} -  m - 2\\
g_2 = & s_{13} m+ s_{14} m + s_{12} + s_{13} + s_{14} + s_{34} - 3 m - 3\\
g_3=&2s_{12} s_{34} - 2 s_{13} s_{34} - 2 s_{14} s_{34} + 2 s_{34}^{2} - 
s_{12} + 2 s_{13} + 2 s_{14} + s_{34} - 3\\
g_4=&2 s_{13} s_{14} -  s_{13} - s_{14} + s_{34}\\
g_5=&2 s_{13}^{2} + 2 s_{14}^{2} - 2 s_{13} s_{34} - 2
s_{14} s_{34} + s_{12} - 4 s_{13} - 4 s_{14} + s_{34} + 3\\
g_6=&2 s_{12}s_{13} + 2 s_{12} s_{14} -  s_{12} m - 2 s_{12} -  s_{13} -  s_{14}
+ s_{34}\\
g_7=&s_{12} m^{2} + 2 s_{12}^{2} + 2 s_{13} s_{34} + 2 s_{14}
s_{34} - 2 s_{34}^{2} - 3 s_{12} m - 4 s_{12}
\\ &-  s_{13} -  s_{14} -
2 s_{34} + 2 m + 4\\
g_8=&2 s_{14}^{2} m + 2 s_{12} s_{14} + 2
s_{14}^{2} + 2 s_{14} s_{34} - 6 s_{14} m -  s_{12} + s_{13}- 5s_{14} -  s_{34}\\
&+ 2 m + 1\\
g_9=&4 s_{14}^{3} - 4 s_{14}^{2} s_{34} + 2
s_{12} s_{14} - 10 s_{14}^{2} - 2 s_{13} s_{34} + 2 s_{14} s_{34} + 2
s_{34}^{2} -  s_{12}\\
&+ s_{13} + 7 s_{14} + 2 s_{34} - 3\\
g_{10}=&4 s_{12}s_{14}^{2} - 2 s_{12} s_{14} m - 4 s_{12} s_{14} - 2 s_{14}^{2} - 2
s_{13} s_{34} + 2 s_{34}^{2} + s_{12} m + s_{12}\\
&+ 2 s_{13} + 2s_{14} + s_{34} - 3.\end{align*}
Let us denote the corresponding system of equations $g_1=0,\ldots ,g_{10}=0$ as $G$. 
Then the positive solutions of $G$ give the 
symmetric kite configurations described above. An exact count of these solutions is described in the following lemma.

\begin{lemma}
\label{lemma:concavekites}
 For each value of $m\in (0,1)$, system $G$ has exactly four real positive solutions corresponding to concave configurations. 
 For each value of $m\in (-\infty,-2) \cup (-\frac 1 2,0)$, system $G$ has exactly two real positive solutions.
 Such solutions correspond to convex configurations if  $m\in(-\frac 1 2,0)$ and to concave configurations if $m\in(-\infty,-2)$.
 For $m=1$, system $G$ has exactly two real positive solutions corresponding to an equilateral triangle
 with either vortex 3 or~4 at the center.  There are no positive real solutions for other values of~$m$.
\end{lemma}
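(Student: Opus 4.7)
The plan is to exploit the fact that $g_1,\ldots,g_7$ are symmetric in $s_{13}$ and $s_{14}$ by introducing the elementary symmetric functions $u=s_{13}+s_{14}$ and $v=s_{13}s_{14}$. From $g_1$ one solves $s_{34}=(m+2-u)/(m+1)$, and equating this with the value $s_{34}=u-2v$ coming from $g_4$ yields
\[
(m+2)(u-1) \; = \; 2(m+1)v. \qquad (\ast)
\]
Equation $g_2$ then gives $s_{12}=3(m+1)-(m+2)u+2v$, and substituting into $g_6$ (after rewriting $u-s_{34}=2v$) produces the univariate quadratic
\[
Q(u,m) \; \equiv \; 2(m+2)u^{2} \; - \; (m^{2}+10m+13)u \; + \; (m+2)(3m+5) \; = \; 0,
\]
whose discriminant factors neatly as $(m-1)(m+1)(m^{2}-4m-9)$. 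The first task is to verify that the remaining equations $g_3,g_5,g_7$ are consequences of $(\ast)$, $Q$, and the expression for $s_{12}$, and that the antisymmetric parts of $g_8,g_9,g_{10}$ under $s_{13}\leftrightarrow s_{14}$ are multiples of $g_2$, so they are automatically satisfied by both roots of $z^{2}-uz+v=0$. This establishes that each admissible value of $u$ lifts to \emph{two} distinct kite configurations, obtained by swapping the roles of vortices $3$ and $4$.

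The second task is a case-by-case sign analysis. A value $u$ yields a genuine kite precisely when $u>0$, $v>0$, $u^{2}-4v>0$, and $s_{12}>0$ (the condition $s_{34}>0$ is then automatic). Using the factored discriminant of $Q$ together with the explicit formulas $v=(m+2)(u-1)/(2(m+1))$ and $s_{12}=3(m+1)-(m+2)u+2v$, I would show that on $m\in(0,1)$ both roots of $Q$ are positive, lie in $(1,m+2)$, satisfy $u^{2}-4v>0$, and give $s_{12}>0$, yielding four configurations; on $m\in(-1/2,0)$ only one of the two positive roots survives $u^{2}-4v>0$ (the other limits to the saturated-out rhombus $(u,v)=(2,1)$ as $m\to 0$), yielding two configurations; on $m\in(-\infty,-2)$ the quadratic has two real roots but exactly one is positive, and the remaining positivity conditions hold for it; at $m=1$ the discriminant vanishes with double root $(u,v)=(2,3/4)$ corresponding to $(s_{13},s_{14})=(3/2,1/2)$, which one checks gives an equilateral triangle with a vortex at the center; and on the excluded regimes $(1,\infty)$ and $(-2,-1/2)$ one of the positivity conditions fails — in particular a direct computation at $m=2+\sqrt{13}$ (where $Q$ has a double root) gives $s_{12}=-(7+\sqrt{13})/4<0$, ruling out all $m>1$ by continuity.

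Finally, the convex/concave classification follows from Propositions~\ref{prop:concave} and~\ref{prop:convex}: since $\lambda'=-1$ the threshold length equals~$1$, so the shape is read off by comparing $s_{12},s_{34},s_{13},s_{14}$ to $1$ using the explicit formulas above. For $m\in(0,1)$ one finds $s_{12}<1<s_{34}$ with one of $s_{13},s_{14}$ on each side of $1$, so Proposition~\ref{prop:concave}(1) gives concavity; on $m\in(-1/2,0)$ the comparisons are consistent with Proposition~\ref{prop:convex}(3), giving a convex kite; on $m\in(-\infty,-2)$ the sign reversal of $m+1$ in $(\ast)$ flips the relevant inequalities and one recovers a concave kite. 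The main obstacle is this last step: matching the two orderings of $(s_{13},s_{14})$ to the prescribed sign pattern of oriented areas $A_i$ for each sign of $m$, in order to identify which specific vortex lies in the interior of the outer triangle (for concave cases) and which pair of opposite vortices lies on a diagonal (for convex cases), requires careful bookkeeping across the three $m$-regimes.
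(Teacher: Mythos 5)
Your reduction is, in substance, the paper's own proof rederived by hand rather than by software. The paper's \texttt{RealTriangularize} decomposition produces the system $T_5$, whose last equation is the quadratic $q_1(s_{34})=2(m+1)(m+2)s_{34}^2-3(m+1)^2s_{34}+m(m+2)$ and whose third equation $q_2(s_{14})$ plays exactly the role of your $z^2-uz+v$. Indeed, substituting $u=m+2-(m+1)s_{34}$ (your $g_1$) into your $Q(u,m)$ gives $(m+1)\,q_1(s_{34})$, and your discriminant factorization $(m-1)(m+1)(m^2-4m-9)$ matches the factors $(m-1)(m^2-4m-9)$ appearing in the paper's vertex ordinate of $q_1$; your $m=1$ computation $(u,v)=(2,3/4)$ reproduces $T_1,T_2$ verbatim. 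The one step where you genuinely depart is the classification for $m\in(0,1)$: the paper rules out convexity by invoking the symmetry theorem of~\cite{albouy_symmetry_2008} (a convex kite with $m>0$ must be a rhombus, which was saturated away), whereas you use direct comparisons with $1/\sqrt{-\lambda'}=1$. That route works and is more self-contained: from $(\ast)$ one gets $(s_{13}-1)(s_{14}-1)=v-u+1=-\frac{m(u-1)}{2(m+1)}<0$ for $m\in(0,1)$, $u>1$, so the exterior sides straddle $1$, contradicting Proposition~\ref{prop:convex}(1).

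Several of your stated claims, however, are wrong or conceal the real work. (i) Your inequality ``$s_{12}<1<s_{34}$'' for $m\in(0,1)$ is reversed: the solutions satisfy $s_{12}>1>s_{34}$ (at $m=1$, $s_{12}=3/2$, $s_{34}=1/2$); the concavity conclusion survives with the corrected pattern, but as written your appeal to Proposition~\ref{prop:concave}(1) would force vortex $1$ or $2$ interior, which is impossible for a kite with $3,4$ on the axis. (ii) ``$s_{34}>0$ is then automatic'' is false as a bare implication ($u=3$, $v=2$ satisfies $u,v>0$, $u^2-4v>0$, yet $u-2v<0$); it holds only after the per-regime bounds on $u$ (e.g.\ $u<m+2$ for $m>-1$, $u>0>m+2$ for $m<-2$) are established — note that on $(-\frac12,0)$ the discarded branch actually has $s_{34}<0$ as well as $u^2-4v<0$. (iii) The antisymmetric part of $g_8$ is literally $(s_{14}-s_{13})\,g_2$, but those of $g_9,g_{10}$ are not multiples of $g_2$: the $g_9$ cofactor reduces, modulo $g_1$, $g_2$, $g_4$ and $(\ast)$, to $Q(u,m)/(m+1)$ — it vanishes on your solution set, but only via this reduction, which you must actually perform (likewise the symmetric parts of $g_3,g_5,g_7,\dots$). (iv) Ruling out $m>1$ ``by continuity'' from the single point $m=2+\sqrt{13}$ is incomplete: on $(1,2+\sqrt{13})$ your discriminant is negative, so there are no real branches at all (continuity cannot be run across a regime where the branch is complex and must be replaced by this discriminant argument), and for $m>2+\sqrt{13}$ you need a certificate that $s_{12}\neq0$ along the branches — e.g.\ the resultant of $Q$ with the numerator of $s_{12}(u,m)$ — not one sample value. (v) Most importantly, ``the remaining positivity conditions hold'' for $m<-2$ hides the most delicate step of the paper's proof: showing $s_{12}>0$ there requires comparing the admissible root with the zero set of $s_{12}$, i.e.\ the paper's argument that $d(m)=\beta_2(m)-\frac{m^2-1}{m(m+2)}>0$; similarly your assertions that $u^2-4v>0$ survives on the good branches correspond to the paper's vertex analyses of $q_2$, which on $(-\frac12,0)$ needed a nontrivial Gr\"obner-basis verification (that $\Delta<0$). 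Once items (i)--(v) are repaired and carried out, your count and classification agree with the lemma.
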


\begin{proof}

Computing a full triangular decomposition of system~$G$ with the positivity conditions $s_{12}>0$, $s_{13}>0$, $s_{14}>0$ and $s_{34}>0$, 
we obtain five simpler systems.   If $m=1$, we have the following two systems:
\beq T_1=\begin{cases}
\begin{matrix}   2s_{12}-3=0\\ 2s_{13}-3=0 \\ 2s_{14}-1=0 \\ 2s_{34}-1=0,  \end{matrix}\end{cases}
\quad \quad T_2=\begin{cases}\begin{matrix} 2s_{12}-3=0 \\2s_{13}-1=0 \\2s_{14}-3=0\\2s_{34}-1=0.\\
              \end{matrix}
\end{cases}
\eeq
If $m=0$, we have the following two systems:
\[T_3=\begin{cases}
\begin{matrix}
 s_{12}-1=0\\
 s_{13}-1=0\\
 4s_{14}-1=0\\
 4s_{34}-3=0,\\
 \end{matrix}\end{cases}\quad
T_4=\begin{cases}
\begin{matrix}
s_{12}-1=0\\
4s_{13}-2=0\\
s_{14}-1=0\\
4s_{34}-3=0.\\
 \end{matrix}\end{cases}
\]
If $m\in(-\infty,-2)\cup(-1/2,0)\cup(0,1)$, we obtain the system
\begin{align*}T_5=\begin{cases}
&s_{12}+(-m^2-2m)s_{34}-1+m^2=0\\
&s_{13}+s_{14}+(m+1)s_{34}-2-m=0\\
&2s_{14}^2+((2m+2)s_{34}-4-2m)s_{14}+(-2-m)s_{34}+m+2=0\\
&(4+6m+2m^2)s_{34}^2+(-3-3m^2-6m)s_{34}+2m+m^2=0.
\end{cases}
\end{align*}
For all other $m$-values the simpler systems have no real solutions.

The case $m=1$ is straightforward, since $T_1$ and $T_2$ each have a unique solution corresponding to an equilateral triangle with 
either vortex 3 or~4 at the center.
The case $m=0$ is uninteresting and we do not analyze it. 
It remains  to study the solutions of system $T_5$. 

\vskip 0.2 cm
\noindent$\bullet$~{\it Solutions of $T_5$ with $s_{34}>0$ and $s_{14}>0$}
\vskip 0.2 cm

The first two equations of $T_5$ are linear in $s_{12}$ and $s_{13}$, so that to each solution $(s_{14},s_{34})$ of
 the last two equations correspond one solution $(s_{12},s_{13},s_{14},s_{34})$ of $T_5$, provided that
 the determinant of the matrix of the coefficients of the linear system is not zero.  Hence, we focus our attention 
on the last two equations of $T_5$. 
Let $A=2(2+3m+m^2)$, $B=-3(m+1)^2$, and $C=m(2+m)$. Then the last equation of $T_5$ can be written as 
\[
q_1(s_{34})=As_{34}^2+Bs_{34}+C = 0
\]
and can be viewed as a parametric equation of a parabola. Clearly $B<0$ for all $m\neq -1$, 
and $A>0$ for $m\in (-\infty,-2)\cup(-1,\infty)$. Thus $A>0$, $B<0$ on the domain where $T_5$ is valid. 
The vertex of the parabola has coordinates
\[
\left(-\frac{B}{2A},q_1\left(-\frac{B}{2A}\right)\right)=\left(\frac{3(m+1)^2}{2(2+3m+m^2)},
-\frac1 8\frac{(m-1)(m^2-4m-9)}{m+2}
\right).
\]
It is easy to see  that $-\frac{B}{2A}>0$ and   $q_1(-\frac{B}{2A})<0$  in the interval of interest, and hence the equation has 
two real solutions for each value of $m\in(-\infty,-2)\cup(-1/2,0)\cup(0,1)$.
Since $C>0$ for $m \in (-\infty,-2)\cup(0,\infty)$ and $C<0$ for $m \in (-2,0)$, it follows that the equation has two positive solutions
for $m \in (-\infty,-2)\cup(0,1)$ and only one positive solution for $m \in (-1/2,0)$.

Let us denote as $\beta_1$ and $\beta_2$ (with $\beta_1<\beta_2$) the (real) solutions of $q_1(s_{34})=0$.
Then through a standard analysis it is possible to find bounds for $\beta_1$ and $\beta_2$. Such bounds are summarized in the table below.

\renewcommand{\arraystretch}{1.5}
\begin{center}
  \begin{tabular}{| c|c|c |c|  }
 \hline
  solutions & $m \in (-\infty,-2)$ & $m \in (-\frac 1 2,0)$ & $m\in (0,1)$ \\ \hline\hline
  $\beta_1$ & $0<\beta_1<\frac 1 2$ &$\beta_1<0$& $0<\beta_1<\frac 1 2$ \\ \hline
   $\beta_2$&$\beta_2>1$ &$0<\beta_2<1 $ &$0<\beta_2< 1 $\\
    \hline
  \end{tabular}
\end{center}

Now consider the third equation of system $T_5$. Let $A'=2$, $B'=(2(m+1)s_{34}-4-2m)$, and
$C'=(m+2)(1-s_{34})$.  Then this equation takes the form 
\[
q_2(s_{14})=A's_{14}^2+B's_{14}+C'=0.
\]
Since this is a quadratic equation in $s_{14}$, there are going to be two solutions for each $s_{34}$ and $m$.
We will denote the (real) solutions corresponding to $s_{34} = \beta_1$ as $\beta_1^1$ and $\beta_1^2$ (with $\beta_1^1<\beta_1^2$), 
and the ones corresponding to  $s_{34} = \beta_2$ as $\beta_2^1$ and $\beta_2^2$ (with $\beta_2^1<\beta_2^2$).
The vertex of this parabola has coordinates
\begin{multline}
\left(-\frac{B'}{2A'},q_2\left(-\frac{B'}{2A'}\right)\right)=\\=\left(\frac{(m+1)(1-s_{34})+1}{2} ,-\frac 1 2(m+1)^2s_{34}^2+m(m+2)s_{34}-\frac m 2(m+2)\right).
\end{multline}
We can then view the ordinate of the vertex, denoted as $r(s_{34})$, as a parabola with coefficients in $m$:
\[
r(s_{34})=-\frac 1 2 (m+1)^2s_{34}^2+m(m+2)s_{34}-\frac m 2(m+2)=A''s_{34}^2+B''s_{34}+C''=0.
\]
The coordinates of the vertex of $r(s_{34})$ are
\[\left(-\frac{B''}{2A''},r\left(-\frac{B''}{2A''}\right)\right)=\left(\frac{m(m+2)}{(m+1)^2} ,-\frac 1 2\frac{m(m+2)}{(m+1)^2} \right).\]

\vskip 0.2cm
\noindent{\it CASE I: $m\in(0,1)$}
\vskip 0.2cm

Since $-\frac{B''}{2A''}>0$, $r\left(-\frac{B''}{2A''}\right)<0$, and $A''<0$ for $m\in(0,1)$, it follows that
$q_2\left(-\frac{B'}{2A'}\right)<0$ for $m\in(0,1)$. Consequently, since $C'>0$ in the same interval, $q_2=0$
has two  positive solutions corresponding to each solution of $q_1=0$ for $m\in(0,1)$.  In other words,
there are four (positive) possible values for $s_{14}$, namely  $\beta_1^1>0$, $\beta_1^2>0$, $\beta_2^1>0$, and $\beta_2^2>0$. 

\vskip 0.2cm
\noindent{\it CASE II: $m\in(-\frac 1 2,0)$}
\vskip 0.2cm

For $m \in (-\frac 1 2,0)$, we have $A' > 0$ and $C' > 0$.  This implies that corresponding to $s_{34}=\beta_2$,
there are either two real positive solutions or no real solutions. We want to show that there are two positive real solutions. 
Clearly $-\frac{B''}{2A''}<0$, $r\left(-\frac{B''}{2A''}\right)>0$, $A''<0$ and $C''>0$ for $m \in (-\frac 1 2,0)$, so that $r=0$ has one positive and one negative root. 
If we show that the  positive root is less than $s_{34} = \beta_2$, then it follows that $q_2\left(-\frac{B'}{2A'}\right)<0$, and consequently $q_2=0$ 
has two positive solutions when $s_{34}=\beta_2$ (i.e., $\beta_2^1>0$ and $\beta_2^2>0$).

Subtracting  $\beta_2$ from the positive root of $r=0$ yields
\[
\Delta={\frac {-(-{{ m}}^{3}-7\,{{ m}}^{2}-4\,x{ m}+y{ m}-7\,{ m}-8\,x+y+3)}{4 \left( {
 m}+2 \right)  \left( { m}+1 \right) ^{2}}}
\]
where 
$x=\sqrt{-m(m+2)}$, and $y=\sqrt{(m-1)(m+1)(m^2-4m-9)}$.
Numerically it is easy to see that $\Delta<0$ for $m\in (-\frac 1 2,0)$. However, we proceed more rigorously. 
Taking the numerator of the expression above and squaring the expressions for $x$ and $y$ yields three polynomials.
Computing a lex Gr\"obner basis for these three polynomials, we obtain 
\[
m(5m^3+7m^2+3m+9)(m+2)^2(m+1)^2
\]
as one of the basis elements.  It is trivial to see that this polynomial has no roots in $[-\frac1 2,0]$, 
that $\Delta(0)<0$, and that $\Delta(-\frac 1 2)<0$.
Since $\Delta$ is continuous on [-1/2,0], we deduce that $\Delta<0$ for $m\in(-\frac 1 2,0)$,
as desired.

\vskip 0.2cm
\noindent{\it CASE III: $m\in(-\infty,-2)$}
\vskip 0.2cm

If $s_{34}=\beta_1$ and  $m\in(-\infty,-2)$, then $C'<0$.  Since $A'>0$, it follows that $q_2=0$ has  one positive and one negative 
real solutions corresponding to $\beta_1$. 
If $s_{34}=\beta_2$ and $m\in(-\infty,-2)$, then $C'>0$. Since  $r\left(-\frac{B''}{2A''}\right)<0$ and $A''<0$ it follows that 
 $q_2\left(-\frac{B'}{2A'}\right)<0.$  This together with the fact that  $-\frac{B'}{2A'}>0$, $A'>0$ and  $C'>0$ implies that, 
 corresponding to $\beta_2$, $q_2=0$ has two real positive solutions, (i.e.,  $\beta_2^1>0$, and $\beta_2^2>0$).
 
\vskip 0.2 cm 
\noindent$\bullet$~{\it Solutions of $T_5$ with $s_{34}>0$, $s_{14}>0$, and $s_{13}>0$}
\vskip 0.2 cm 

We now study which solutions among the ones found above have  $s_{13}>0$.
The resultant of the second and third equations of $T_5$ with respect to $s_{12}$ is

\[R=2s_{13}^2+((2m+2)s_{34}-2m-4)s_{13}+(-2-m)s_{34}+m+2.\]
If $s_{13}$ is replaced with $s_{14}$, this polynomial transforms into the one that appears in the third equation of $T_5$.
Hence our discussion regarding $s_{14}$ applies to $s_{13}$. Note that since we saturated with respect to $s_{13}-s_{14}$,
every solution must have $s_{13} \neq s_{14}$. Since $R=0$ has two solutions for each $m$ and $s_{34}$, 
when $s_{14}$ is one such solution, $s_{13}$ must be the other.

\vskip 0.2 cm 
\noindent$\bullet$~{\it Solutions of $T_5$ with $s_{34}>0$, $s_{14}>0$, $s_{13}>0$, and  $s_{12}>0$}
\vskip 0.2 cm 

 \begin{figure}[t]
 \centering
 \includegraphics[width=10 cm,keepaspectratio=true]{./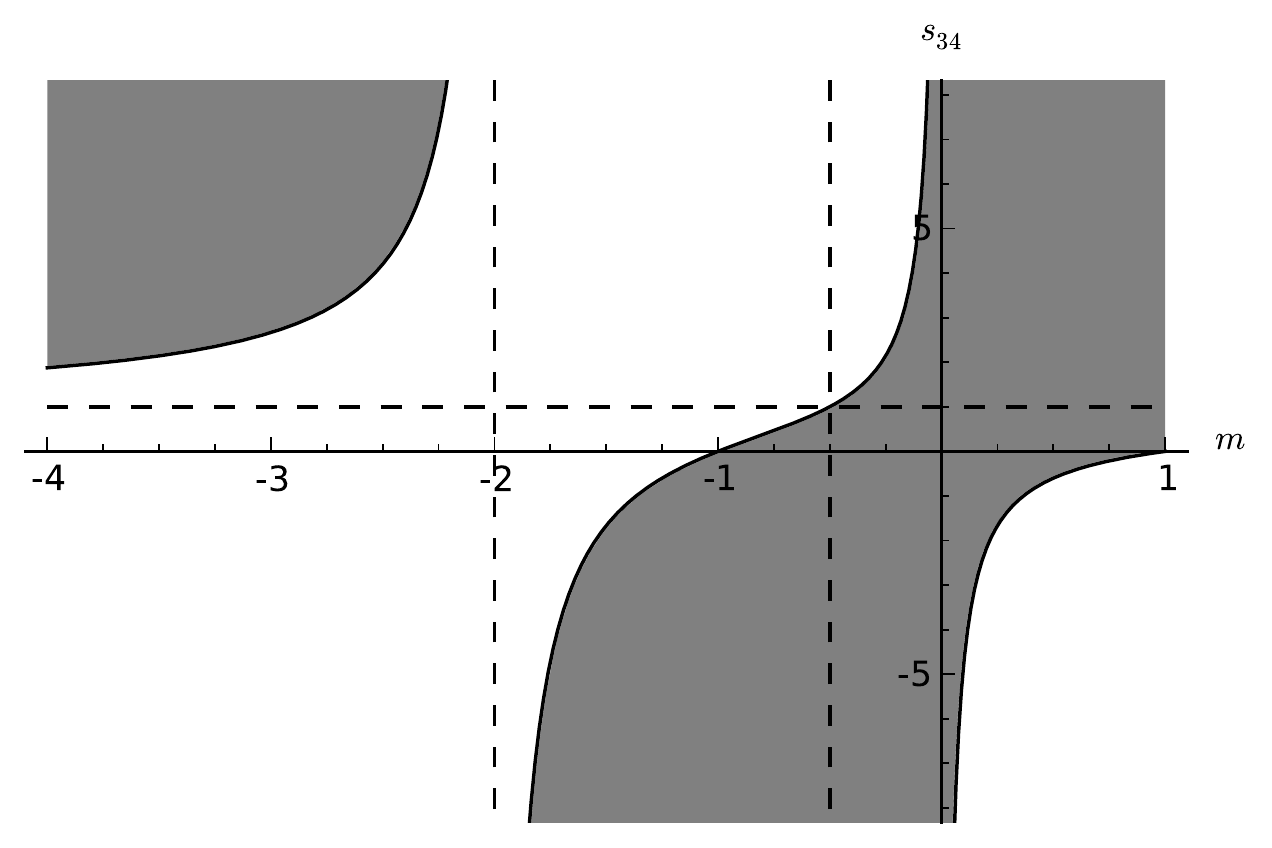}
 \caption{This figure summarizes the analysis of  the sign of $s_{12}$. The grey areas represent regions where $s_{12}>0$.
 The white areas represent regions where $s_{12}<0$. Along the solid lines $s_{12}=0$.
 \label{fig:sign} }
\end{figure}

We now study which solutions among the ones found above have  $s_{12}>0$. Consider the first equation of $T_5$
and rewrite it in the form
\beq\label{eq:s12}
s_{12}=(m^2+2m)s_{34}+1-m^2.
\eeq
This equation has a unique solution for each value of  $s_{34}$ and $m$. Solutions corresponding to $s_{34} = \beta_i$ will be denoted $\alpha_i$. 
We can view this expression as  defining   a function of the two variables $m$ and $s_{34}$. 
The sign of this function, i.e., the sign of $s_{12}$, is summarized in Figure~\ref{fig:sign}. As mentioned earlier, the 
last equation of $T_5$ gives that $s_{34}=\beta_1$ or $s_{34}=\beta_2$, and when $\beta_i > 0$, 
we have $0 < s_{34} < 1$ for $m \in (-\frac 1 2,0) \cup (0,1)$. 
Hence $s_{12} > 0$ for any positive solution of $q_1(s_{34}) = 0$. 

The situation is a bit more delicate when $m \in (-\infty, -2)$. 
In this case, $\beta_2 > 1$, and thus we cannot arrive at a definitive conclusion just looking at Figure~\ref{fig:sign}.
However, using standard methods, one can prove that the following function of~$m$, defined by
\[
d(m)=\beta_2(m)-\frac{m^2-1}{m(m+2)},
\]
is always positive (where the graph of  $\hat{f}(m)=\frac{m^2-1}{m(m+2)}$ is the boundary of the grey region 
in Figure~\ref{fig:sign} in the interval $(-\infty,-2)$).  Thus, $\beta_2$  lies in the region where $s_{12}>0$
when $m\in (-\infty,-2)$.  Consequently, there are always  positive solutions of $T_5$ corresponding to $\beta_2$. 
On the other hand,  $0<\beta_1<\frac 1 2$ for $m \in (-\infty, -2)$, and thus there are no positive solutions of $T_5$ 
corresponding to $\beta_1$ in this interval. 
\vskip 0.4 cm
In summary, we have shown that the solutions $(s_{12},s_{13},s_{14},s_{34})^T$ with all components positive  have the form
\[
\begin{bmatrix}
 \alpha_1\\
 \beta_{1}^2\\
 \beta_1^1\\
 \beta_1
\end{bmatrix}, \quad
\begin{bmatrix}
 \alpha_1\\
 \beta_1^1\\
 \beta_1^2\\
 \beta_1
\end{bmatrix}, \quad
\begin{bmatrix}
 \alpha_2\\
 \beta_2^2\\
 \beta_2^1\\
 \beta_2
\end{bmatrix}, \quad
\begin{bmatrix}
 \alpha_2\\
 \beta_2^1\\
 \beta_2^2\\
 \beta_2
\end{bmatrix},
\]
for $m \in (0,1)$,  while they have the form
\[
\begin{bmatrix}
 \alpha_2\\
 \beta_2^2\\
 \beta_2^1\\
 \beta_2
\end{bmatrix}, \quad
\begin{bmatrix}
 \alpha_2\\
 \beta_2^1\\
 \beta_2^2\\
 \beta_2
\end{bmatrix}
\]
for $m \in (-\infty,-2)\cup(-\frac 1 2,0)$. 

We are left with deciding which solutions are convex and which are concave. For $m\in (0,\infty)$, it follows from 
the main result in~\cite{albouy_symmetry_2008} that, if the kite is convex, it must be a rhombus. Since the rhombus case 
was excluded, these solutions correspond to concave configurations.  For $m \in (-\frac 1 2,0)$, we have that $s_{34}=\beta_2<1$. 
However, according to Proposition~\ref{prop:concave}, since $\lambda'=-1$, the interior side connecting equal vorticities 
in a concave configuration is greater than $1$.  Hence, the configurations are, in this case, convex. 
For $m\in (-\infty, -2)$ we have that $s_{34}=\beta_2>1$. Furthermore, substituting $s_{34}=\beta_2(m)$ into equation~(\ref{eq:s12}),
we obtain an expression that is function of $m$ alone.  Examining this function, it is easy to show that $s_{12}=\alpha_2(m)<1$ for all 
$m\in(-\infty,0)$. Thus, from Propositions \ref{prop:concave} and~\ref{prop:convex}, it follows that the solutions correspond, in this case, to concave configurations. 

This completes the proof of Lemma~\ref{lemma:concavekites}.

\end{proof}

We now consider the other case, where vortices $1$ and~$2$ lie on the axis of symmetry. 
We compute a Gr\"obner basis for the ideal $\langle \mathcal{\bar F}',\mathcal{\bar G}',\bar  e_{CM}',\bar {H}'\rangle$,
where $ \mathcal{\bar F}',\mathcal{\bar G}',\bar  e_{CM}',\bar {\mathcal H}'$ are obtained from 
 $ \mathcal{\widetilde F},\mathcal{\widetilde G},\widetilde e_{CM}$ and $\mathcal{\widetilde H}$, respectively,
by imposing the conditions  $s_{14}=s_{13}$ and $s_{24}=s_{23}$.
We saturate with respect to
$s_{13} - s_{23}$ in order to exclude the rhombus configurations. We also saturate with respect to 
$s_{12}, s_{13}, s_{23}, s_{34}$ and $m$. This computation yields a polynomial system that we will call $G_1$. 
Then we can prove the following lemma.

\begin{lemma}\label{lemma:concavekites1}
 For each value of $m \in (1,\infty)$, system $G_1$ has exactly four real positive solutions corresponding to concave configurations.
 For each value of $m \in (-\infty,-2) \cup (-\frac 1 2,0)$, system $G_1$ has exactly two real positive solutions.
 Such solutions correspond to concave configurations if $m\in(-\frac 1 2,0)$, and to convex configurations if $m\in(-\infty,-2)$.
 For $m=1$, system $G_1$  has exactly two real positive solutions corresponding to an equilateral triangle
 with either vortex 1 or~2 at the center.  There are no positive real solutions for other values of~$m$.
\end{lemma}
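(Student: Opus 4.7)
The proof plan is to derive Lemma~\ref{lemma:concavekites1} from Lemma~\ref{lemma:concavekites} by exploiting a natural symmetry: swapping the roles of the two equal-strength pairs together with the reciprocation $m \mapsto 1/m$. Relabeling vortices $(1,2) \leftrightarrow (3,4)$ sends a configuration with $\Gamma_1 = \Gamma_2 = 1$, $\Gamma_3 = \Gamma_4 = m$ and vortices 3 and 4 on the axis of symmetry to one with $\Gamma_1 = \Gamma_2 = m$, $\Gamma_3 = \Gamma_4 = 1$ and vortices 1 and 2 on the axis. Scaling all vorticities by the common factor $1/m$ then normalizes to $\Gamma_1 = \Gamma_2 = 1$, $\Gamma_3 = \Gamma_4 = 1/m$. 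Because both $\lambda$ (by the formula $\lambda = L/(2I)$) and $\Gamma$ are multiplied by $1/m$ under this rescaling, the normalization $\lambda' = -\lambda/\Gamma = -1$ is automatically preserved, so no further rescaling of the distances is needed. This yields a bijection, respecting equivalence classes under rotation and scaling, between kite relative equilibria with vortices 3 and 4 on the axis at parameter $m$, and those with vortices 1 and 2 on the axis at parameter $1/m$. Since the geometric shape is intrinsic to the configuration, convex stays convex and concave stays concave.

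The map $m \mapsto 1/m$ sends $(0,1) \leftrightarrow (1, \infty)$ and $(-1/2, 0) \leftrightarrow (-\infty, -2)$, fixes $m = \pm 1$, and sends the excluded interval $(-2, -1/2) \setminus \{-1\}$ into itself. Applying Lemma~\ref{lemma:concavekites} through this bijection immediately yields the four statements of Lemma~\ref{lemma:concavekites1}: four concave solutions for $m \in (1, \infty)$ (from the four concave solutions at $m \in (0,1)$); two convex solutions for $m \in (-\infty, -2)$ (from the two convex solutions at $m \in (-1/2, 0)$); two concave solutions for $m \in (-1/2, 0)$ (from the two concave solutions at $m \in (-\infty, -2)$); and at $m = 1$, two solutions each corresponding to an equilateral triangle with either vortex~1 or vortex~2 at the center, matching the two triangle solutions of Lemma~\ref{lemma:concavekites} at $m=1$ under the vortex relabeling. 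The absence of other solutions for other values of $m$ follows by the same correspondence.

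Alternatively, one could mirror the entire computational proof of Lemma~\ref{lemma:concavekites}: compute a Gr\"obner basis $G_1$ for $\langle \mathcal{\bar F}', \mathcal{\bar G}', \bar e_{CM}', \bar{\mathcal H}' \rangle$ under the conditions $s_{14}=s_{13}$, $s_{24}=s_{23}$, saturate against $s_{13}-s_{23}$ (to kill rhombi) and against $s_{12}, s_{13}, s_{23}, s_{34}, m$, then extract a triangular decomposition via \texttt{RealTriangularize}. By the symmetry argument above, the resulting components will coincide with those of Lemma~\ref{lemma:concavekites} after exchanging the roles of $s_{12}$ and $s_{34}$ and substituting $m \mapsto 1/m$, so the quadratic-in-$s_{34}$ and quadratic-in-$s_{14}$ sign analysis transfers verbatim, including the monotonicity and the Sturm-sequence verification of $\Delta<0$. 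The main obstacle in either route is bookkeeping: carefully verifying that the label swap together with the vorticity rescaling preserves the equivalence-class count (which it does, since common scaling of vorticities is not an equivalence but only alters the overall $\lambda$-scale), and correctly transferring the convex/concave label to the new parameter intervals. The symmetry approach bypasses the lengthy case-by-case positivity analysis, so it is the more efficient path.
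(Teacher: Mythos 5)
Your proposal is correct, and its engine --- the reciprocation $m \mapsto 1/m$ combined with swapping the two equal-strength pairs --- is exactly the mechanism of the paper's proof; the difference lies in where the symmetry is applied. The paper does not argue abstractly at the level of configurations: it computes a full triangular decomposition of $G_1$ (obtaining linear systems $S_1,S_2$ at $m=1$, handled directly, and a generic system $S_3$) and then verifies concretely that the substitution $(m,s_{12},s_{13},s_{23},s_{34}) \mapsto (\tfrac{1}{m}, s_{34}, s_{14}, s_{13}, s_{12})$, after clearing denominators, carries the equations of $S_3$ onto those of $T_5$ from Lemma~\ref{lemma:concavekites}, so the positivity analysis there applies verbatim. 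Your configuration-level bijection is cleaner: it needs only the observations (both correct) that $\lambda'$ is invariant under a common rescaling of the vorticities, so the normalization $\lambda'=-1$ is preserved with no rescaling of distances, and that the relabeling carries the saturation loci to each other (e.g., $s_{13}-s_{14} \mapsto s_{13}-s_{23}$, so rhombi map to rhombi); this buys you the entire lemma without recomputing any decomposition, and your second route is essentially the paper's proof itself. The one loose end is $m=0$: the rescaling by $1/m$ is undefined there, so your claim that ``the absence of other solutions follows by the same correspondence'' does not cover the assertion that $G_1$ has no positive real solutions at $m=0$. This is not vacuous --- the mirror system $G$ \emph{does} have positive real solutions at $m=0$ (the components $T_3$, $T_4$ in the proof of Lemma~\ref{lemma:concavekites}), and $m=0$ lies outside the correspondence on both sides --- so your route requires a short direct check of $G_1$ at $m=0$, whereas the paper's explicit decomposition settles it automatically.
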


\begin{proof}
Computing a full triangular decomposition of $G_1$, with the positivity conditions $s_{12}>0$, $s_{13}>0$, $s_{14}>0$ and $s_{34}>0$,
we decompose $G_1$  into three simpler systems $S_1, S_2$ and $S_3$.  For $m=1$, we have two systems, $S_1$ and $S_2$. 
They are linear, and each of them has a unique solution corresponding to an equilateral triangle configuration with vortex 1 or~2 at the center. 
For $m\in (-\infty,-2)\cup(-\frac 1 2,0)\cup (1,\infty)$, we obtain a third system $S_3$.
This last system, possibly after computing a lexicographic Gr\"obner basis, can be reduced to $T_5$ by performing the 
transformation $(m,s_{12},s_{13},s_{23}, s_{34}) \mapsto (\frac 1 m,s_{34},s_{14},s_{13},s_{12})$ and clearing the denominators. 
This transforms each of the four polynomial equations in $S_3$ to the equations of $T_5$.
Consequently, these cases follow from Lemma~\ref{lemma:concavekites}.
For all other $m$-values the simpler systems have no real solutions.
\end{proof}

\begin{remark}

\begin{enumerate}

\item The proofs of Lemmas \ref{lemma:concavekites} and~\ref{lemma:concavekites1} yield some interesting information on the type of
 kite central configurations.  For example, when $m \in (0,1)$, there are solutions only when the smaller strength vortices
(3 and~4) are on the line of symmetry.  In this case, there are two geometrically distinct configurations for each choice of the central
vortex.

\item  As described in the introduction, the equilateral triangle with a central vortex bifurcates into both concave kite configurations
and asymmetric concave configurations as $m$ decreases through $m = 1$.  The fact that this particular solution shows up in both Sections
\ref{section:AsymmetricCC}  and~\ref{section:concave-kites} is hardly surprising.  In fact, when solving the Albouy-Chenciner equations for
$m=1$ without any restrictions on the variables, this solution occurs four times with a multiplicity of four.  The Hessian matrix
$D^2(H + \lambda I)$ evaluated at this solution has a null space of dimension 2 (excluding the 
``trivial'' eigenvector in the direction of rotation).  One can check that there are eigenvectors in the null space corresponding to
both a symmetric and asymmetric perturbation of the solution.  In fact, the symmetric perturbation
gives rise to the concave kite configurations described above.  The central configuration consisting
of four equal vortices on an equilateral triangle with a vortex at the center is a highly degenerate solution.

\end{enumerate}

\end{remark}

\subsection{Kite configurations: $\lambda' > 0$}
\label{sec:kites-lampos}

In Section~\ref{sec:distancesascoordinates} we observed that, if the vorticities have different
signs, we could have central configurations with $\lambda' > 0$.  In this case, taking
$\Gamma_1 = \Gamma_2 = 1$ and $\Gamma_3 = \Gamma_4 = m < 0$, the configuration is necessarily convex with vortices 1 and~2 on one
diagonal and vortices 3 and~4 on the other.   In this section, we first show that these convex solutions must contain a line of symmetry.
Based on the main result in~\cite{albouy_symmetry_2008} (applicable to our problem only when $m > 0$),
we might expect each solution to contain two lines of symmetry, forming a rhombus.  We show here that this
is not necessarily true.  When $\lambda' > 0$ and $m \in (m^\ast, -1/2)$, where $m^\ast \approx -0.5951$, 
there exists a family of convex kite central configurations that are not rhombi.  There does exist a family of rhombi when $\lambda' > 0$,
as discussed in Section~\ref{rhombus_sec}.
In fact, this family of rhombi undergoes a pitchfork bifurcation at $m = m^\ast$ that results in the convex kites.

\begin{proposition}
Let $\Gamma_1 = \Gamma_2 = 1$ and $\Gamma_3 = \Gamma_4 = m < 0$.  Any strictly planar
solution to the Albouy-Chenciner equations with $\lambda' = 1$ must possess a line of symmetry.
\label{prop:symmetry-lampos}
\end{proposition}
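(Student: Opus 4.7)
The plan is to parallel the computation of Section~\ref{sec:elim-symm} with the normalization $\lambda'=1$ in place of $\lambda'=-1$. First, I would observe that by the discussion in Section~\ref{sec:distancesascoordinates}, every strictly planar solution with $\lambda'=1$ and $m<0$ is convex with vortices $1$ and $2$ on one diagonal and vortices $3$ and $4$ on the other, the areas $A_1,A_2$ having one sign and $A_3,A_4$ the opposite. Combined with Lemma~\ref{lemma:symm} and relation~(\ref{eq:symm1}), this means the configuration admits a line of symmetry precisely when at least one of $s_{13}=s_{14}$, $s_{13}=s_{23}$, or $s_{13}=s_{24}$ holds: the first two give the kite configurations with vortices $1,2$ or with vortices $3,4$ on the axis, while the third forces a rhombus (the isosceles-trapezoid branch of Lemma~\ref{lemma:symm} is excluded because vortices $1$ and $2$ are opposite rather than adjacent). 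Thus it suffices to show that, after removing these three symmetric loci, the system $\widetilde{\mathcal E}$ (built with $\lambda'=1$) admits no real solutions for $m\in(-1,0)$.

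Next I would form the ideal $\langle\widetilde{\mathcal F},\widetilde{\mathcal G},\tilde e_{CM},\widetilde{\mathcal H}\rangle$ in $\mathbb{C}[m,s_{12},s_{13},s_{14},s_{23},s_{24},s_{34}]$ with $\lambda'=1$, $\Gamma_1=\Gamma_2=1$, and $\Gamma_3=\Gamma_4=m$, and saturate with respect to $s_{13},s_{14},s_{24},m$ as well as with respect to the six differences $(s_{13}-s_{24})$, $(s_{14}-s_{23})$, $(s_{13}-s_{14})$, $(s_{23}-s_{24})$, $(s_{13}-s_{23})$, $(s_{14}-s_{24})$, exactly as in Section~\ref{sec:elim-symm}. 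Following the same recipe, I would then compute a lexicographic Gr\"obner basis with an elimination order that isolates $m$ together with a single $s_{ij}$, extracting univariate polynomials analogous to $p_1$ and $p_2$. The aim is to show that the resulting polynomials admit no positive real roots on $m\in(-1,0)$, which would force the saturated ideal to define the empty real variety and thus rule out asymmetric solutions with $\lambda'=1$.

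The main obstacle is the real-root analysis itself. Unlike the $\lambda'=-1$ case, where the analogue of $p_2$ had four positive real roots (giving the asymmetric configurations classified in Section~\ref{section:AsymmetricCC}), here one needs the opposite conclusion. I would attack this with the tools already developed: Lemma~\ref{lemma:quartic-roots} applied to the shifted quartic, Descartes' rule of signs on M\"obius-transformed polynomials as in the earlier analysis of $p_1$, and Sturm sequences where necessary. If the degree of the eliminated polynomial grows beyond what is comfortable, the \texttt{RealTriangularize} procedure used in Section~\ref{section:AsymmetricCC} supplies a certified fallback. Finally, the special parameter values $m=-1$ (where $\Gamma=0$), $m=-1/2$ (where leading coefficients vanish, as in the collinear analysis of Section~\ref{section:collinear}), and $m=m^\ast$, $m=-2+\sqrt{3}$ (where the known rhombus and kite families bifurcate) must each be verified separately to ensure the argument is uniform across all of $(-1,0)$.
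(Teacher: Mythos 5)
Your proposal follows essentially the same approach as the paper's proof: the same ideal generated by $\mathcal{\widetilde F}, \mathcal{\widetilde G}, \tilde e_{CM}, \mathcal{\widetilde H}$ with the normalization $\lambda' = 1$, the same saturations with respect to the mutual-distance variables and the six differences of the $s_{ij}$, and the same conclusion via the Symmetry Lemma that eliminating the symmetric loci must leave no real solutions with all $s_{ij} > 0$. The only divergence is tactical: the paper does not extract univariate eliminants in the style of Section~\ref{sec:elim-symm}, but instead applies the triangular decomposition of semi-algebraic systems (your stated fallback, the \texttt{RealTriangularize} route of Section~\ref{section:AsymmetricCC}) with the inequalities $s_{ij} > 0$ directly to the saturated system of 12 polynomials, obtaining an empty triangular decomposition uniformly in $m$ --- which also disposes of the special parameter values you propose to check separately.
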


\begin{proof}
The type of computations required are very similar to those used in Sections \ref{section:AsymmetricCC} and~\ref{section:concave-kites}.
Consider the Albouy-Chenciner and the unsymmetrized Albouy-Chenciner equations together with the Cayley-Menger determinant 
and Dziobek equations with the normalization $\lambda'=1$. Take a Gr\"obner basis of the ideal generated by such equations and 
saturate with respect to some of the variables to eliminate possible solutions where one of the mutual distances has zero length. 
Then, saturating with respect to the differences 
$(s_{13}-s_{24}),~(s_{23}-s_{14})$, $(s_{13}-s_{14})$, $(s_{23}-s_{24})$, $(s_{13}-s_{23})$ and~$(s_{14}-s_{24})$,
yields a system of 12 polynomial equations.  Analyzing this system by the method of triangular decomposition of 
semialgebraic systems (with  the inequalities $s_{ij}>0)$ yields an empty triangular system. 
Therefore, due to the saturation and the Symmetry Lemma, there are no asymmetric solutions in this case. 
\end{proof}

To find the solutions with $\lambda' > 0$, we impose the symmetry on the configuration and use equations~(\ref{eqDziobek1}) 
since the areas $A_i$ are easily computable.  Suppose that vortices 1 and~2 are on the axis
of symmetry and set $r_{14} = r_{13}$ and $r_{24} = r_{23}$.  We impose the scaling $r_{34} = 1$ rather than
choose $\lambda' = 1$.   Introduce two new variables $x$ and~$y$ measuring the distance between the intersection of the
two diagonals and vortices 1 and~2, respectively.  Thus, we have the simple geometric equations
\begin{equation}
x^2 + \frac{1}{4} = r_{13}^2, \quad y^2 + \frac{1}{4} = r_{23}^2 \quad  \mbox{and} \quad  x + y = r_{12}.
\label{eq:geom-lampos}
\end{equation}
Equations~(\ref{eq:geom-lampos}) taken together with 
$r_{34} = 1$ imply that the Cayley-Menger determinant $e_{CM}$ vanishes.

Under this setup, convex configurations correspond
to $x > 0$ and $y > 0$, while concave configurations have $xy < 0$.  The oriented areas of the four triangles are given by $A_1 = y/2$, $A_2 = x/2$
and $A_3 = A_4 = -r_{12}/4$, where the signs are taken without loss of generality.
Note that one of the Dziobek equations in~(\ref{eq:Dzio}) is automatically satisfied.  The other equation yields an expression
for $\lambda'$ given by
$$
\lambda' \; = \;  \frac{r_{13}^2 r_{23}^2 - r_{12}^2}{r_{12}^2 (r_{13}^2 + r_{23}^2) - r_{13}^2 r_{23}^2 (r_{12}^2 + 1)} .
$$
Equations~(\ref{eqDziobek1}), the formula for $\lambda'$ and equations~(\ref{eq:geom-lampos}) yield
a polynomial system in the variables $r_{12}, r_{13}, r_{23}, x, y, \lambda'$ and $m$.
We then saturate this system with respect to $r_{13}, r_{23}, r_{13} - r_{23}$ and~$r_{13} + r_{23}$
to eliminate the rhombus solutions and insure the mutual distance variables are nonzero.   Denote the resulting polynomial
system as $P_{{\rm kite}}$.  Analyzing Gr\"obner bases of the ideal generated by $P_{{\rm kite}}$ with different elimination orderings 
yields the following theorem.  Recall that $m^\ast \approx -0.5951$ is the only real root
of the cubic $9m^3 + 3m^2 + 7m + 5$. 

\begin{theorem}
For $m^\ast < m < -1/2$, there exists four convex kite configurations with vortices 1 and~2 on the axis of
symmetry.  These solutions have $\lambda' > 0$ and are not rhombi.   There are no other strictly kite
solutions (with vortices 1 and~2 on the axis of symmetry) with $\lambda' > 0$ and $m < 0$.
\label{thm:kite-lampos}
\end{theorem}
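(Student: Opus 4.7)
The plan is to exploit the polynomial system $P_{{\rm kite}}$ constructed just above the theorem. First, I would compute a lex Gröbner basis that successively eliminates $\lambda'$, $r_{13}$, $r_{23}$, and $y$, producing a univariate polynomial $\Phi(x;m)$ in $x$ alone, together with auxiliary basis elements that recover $y$, $r_{13}$, $r_{23}$, and $\lambda'$ as rational expressions in $x$ and $m$. Because the saturation with respect to $r_{13}\pm r_{23}$ excludes the rhombus configurations (invoking the Symmetry Lemma), any real positive solution with $x\neq y$ corresponds to a genuine, non-rhombus kite with vortices~1 and~2 on the axis of symmetry.

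Next, I would analyse the roots of $\Phi$ as a function of $m$. I expect $\Phi$ to be a quartic in $x$ whose leading coefficient carries a factor of $(2m+1)$; this would explain the boundary value $m=-1/2$, for as $m\to -1/2^-$ the degree of $\Phi$ drops and one pair of roots escapes to infinity, consistent with the vanishing-sum-of-three-vorticities degeneration discussed in the introduction and with the merger with the concave kite family of Section~\ref{section:concave-kites}. The discriminant of $\Phi$ with respect to $x$ should factor as a product containing $(9m^3+3m^2+7m+5)^2$, pinpointing $m=m^{\ast}$ as the other bifurcation value. I would then apply Lemma~\ref{lemma:quartic-roots} (through the signs of $p$ and $p^2-4r$ for the shifted quartic of $\Phi$), together with Descartes' rule of signs after a Möbius transformation of the type used in Section~\ref{section:asymmetric}, to show that $\Phi$ has exactly four positive real roots for $m\in(m^{\ast},-1/2)$ and no positive real roots for $m\in(-1,0)\setminus(m^{\ast},-1/2)$.

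Finally, I would verify that each such root yields an admissible strictly convex kite with $\lambda'>0$. Convexity reduces to $x>0$ and $y>0$, where $y$ is recovered linearly from the Gröbner basis and $x+y=r_{12}>0$. Positivity of $\lambda'$ is checked by substituting the basis relations into the explicit formula for $\lambda'$ stated immediately before the theorem, obtaining $\lambda'$ as a rational function of $m$ and $x$; a Sturm-sequence argument, anchored by direct evaluation at the endpoints, establishes $\lambda'>0$ throughout $(m^{\ast},-1/2)$ and matches continuously onto the positive-$\lambda'$ rhombus branch at the pitchfork $m=m^{\ast}$ described in Section~\ref{rhombus_sec}. The count of four equivalence classes then comes from the four positive real roots of $\Phi$, with reflections across the axis of symmetry counted separately per the convention stated in the introduction. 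The main obstacle I anticipate is twofold: isolating the factor $(9m^3+3m^2+7m+5)$ in the discriminant of $\Phi$ so that $m^{\ast}$ emerges unambiguously, and ruling out spurious positive real roots for $m\leq m^{\ast}$ or $-1/2<m<0$. Both points are amenable to the Möbius-plus-Descartes technique used in the proof of Lemma~\ref{lemma:roots}, supplemented by direct computation at the distinguished values $m=-1/2$ and $m=m^{\ast}$ where the Gröbner basis simplifies considerably.
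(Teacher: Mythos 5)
Your overall strategy (eliminate via a Gr\"obner basis of $P_{\rm kite}$ to a univariate eliminant in $x$, analyze its discriminant, apply Lemma~\ref{lemma:quartic-roots}, and extend back) is the same as the paper's for the \emph{existence} half, but two of your structural expectations are wrong in ways that matter. First, the eliminant is an \emph{even} polynomial of degree 8 in $x$, i.e.\ a quartic $\zeta_m(z)$ in $z=x^2$, and for $m^\ast<m<-1/2$ its discriminant is strictly \emph{negative}, so $\zeta_m$ has exactly \emph{two} (positive) real roots, not four; the same quartic governs $y^2$, and the four configurations arise as the two assignments $\{x^2,y^2\}=\{z_1,z_2\}$ times the reflection interchanging vortices 3 and~4 --- if your $\Phi$ really had four positive roots each yielding a kite, counting reflections separately would give eight classes, contradicting the theorem. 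Relatedly, the cubic $9m^3+3m^2+7m+5$ enters the discriminant to the \emph{first} power, not squared as you predict: a squared factor cannot produce the sign change of the discriminant at $m^\ast$ that is exactly what switches the root structure from two real roots (on $(m^\ast,-1/2)$) to four complex roots (on $(-1,m^\ast)$), so your anticipated factorization is incompatible with the transition you need.

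The more serious gap is in the non-existence half. You propose to rule out other $\lambda'>0$ kites by showing $\Phi$ has no positive real roots for $m\in(-1,0)\setminus(m^\ast,-1/2)$, but this is false: the system $P_{\rm kite}$ is not normalized by a sign of $\lambda'$ (only $r_{34}=1$ is fixed), so the concave kites of Theorem~\ref{thm:concavekites} with vortices 1 and~2 on the axis --- which exist for all $m\in(-1/2,0)$ and have $\lambda'<0$, $xy<0$ --- are genuine real solutions with $x^2,y^2>0$. Root counting of the eliminant therefore cannot separate them from putative $\lambda'>0$ solutions; one needs an argument about the \emph{sign of} $\lambda'$ itself. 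The paper does this by eliminating down to a quadratic in $\lambda'$ with coefficients in $m$, namely
\begin{equation*}
2m^2(m+1)\,\lambda'^{\,2}+(4m-1)(m+1)^2\,\lambda'+m(m+2)(2m+1),
\end{equation*}
and, for $m\in(-1/2,0)$, combining the bound $\lambda'_+>3$ on its positive root with the identity $4m^2xy(1+\lambda')=1+\lambda' r_{12}^2$ from the Dziobek equations to reach a contradiction; for $m<-2$, where $\zeta_m$ again has two positive real roots, the same quadratic shows $\lambda'<0$. Your proposal contains no substitute for this step, and it also never treats $m\le -1$ at all, even though the theorem asserts non-existence for \emph{all} $m<0$ (and this full range is used later to dispose of the case with vortices 3 and~4 on the axis). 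As written, the plan would establish the family on $(m^\ast,-1/2)$ but would fail to prove the exclusion statement.
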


\begin{proof}
Let $G_{{\rm kite}}$ be the Gr\"obner basis of the ideal generated by $P_{{\rm kite}}$ 
with respect to the lex order where $r_{23} > r_{13} > r_{12} > \lambda' > y > x > m$.
The first element in $G_{{\rm kite}}$ is an even 8th-degree polynomial in $x$ with coefficients in $m$.   
Letting $z = x^2$, this polynomial is given by
$$
\begin{array}{cl}
\zeta_m(z) = & 256 m^2 (m+2) (2m+1)^2 \, z^4  - 256 m (9m^4+23m^3+17m^2-m-3) \, z^3 \\[0.05in]
& + (1728m^5 + 3136m^4 + 992m^3 -384m^2 + 64m + 128) \, z^2 \\[0.05in]
& + (-432m^5 -336m^4 + 48m^3 - 80m^2 + 16m + 64) \, z + (m + 2)^3 .
\end{array}
$$
Computing a Gr\"obner basis with the same ordering of variables except that $x > y > m$ 
yields the same polynomial with $z = y^2$.  The discriminant 
of $\zeta_m$ is a positive multiple of
$$
m^4 (m+1)^3 (2m+1) (9m^3+3m^2+7m+5) (9m^2+4m-1)^2 (m-1)^2 (q_k(m))^2   
$$
where $q_k(m) = 2m^5+8m^3+14m^2+4m-1.$  Note that the discriminant vanishes at $m = m^\ast$.
It also vanishes at $m = -1/2$, where $\zeta_m(z)$ becomes a cubic polynomial with roots
$3/4$ and $-1/4$ (multiplicity 2).  It is straight-forward to check that the discriminant
is strictly negative for $m^\ast < m < -1/2$.  Consequently, $\zeta_m(z)$ has precisely two real roots
for each $m$-value in this interval.  

To see that these roots are always positive, we first use resultants to
compute the repeated root of $\zeta_m(z)$ when $m = m^\ast$.  This yields the value $z^\ast \approx 1.9566$,
the only root of the cubic $c_u(z) = 320z^3 - 656z^2 + 60z - 3$.  Next consider the ideal in $\mathbb{C}[z,m]$ 
generated by the system $\{ \zeta_m, c_u \}$, which has $(z^\ast, m^\ast)$ in its variety.  Computing 
a Gr\"{o}bner basis for this ideal which eliminates $z$, we obtain a polynomial in $m$ that has no roots
for $m^\ast < m \leq -1/2$.  Since $\zeta_m(z^\ast) < 0$ when $m = -1/2$, it follows that $\zeta_m(z^\ast) < 0$
for any $m \in (m^\ast, -1/2)$.  Then, since the constant term and leading coefficient of $\zeta_m$ are
both positive for $m^\ast < m < -1/2$, the two real roots of $\zeta_m$ must be positive.

Choose $x$ to be the positive square root of one of the roots of $\zeta_m$.  The second element
in the Gr\"obner basis $G_{{\rm kite}}$ is linear in $y$, and the coefficient of $y$ is non-zero for
$m^\ast < m < -1/2$.    Thus, by the Extension Theorem, we can extend a solution $(x, m)$ of
$\zeta_m(x) = 0$ to a unique partial solution $(y, x, m)$, where $y$ must be real.  By considering 
the other polynomials in $G_{{\rm kite}}$, three of which are given by equations~(\ref{eq:geom-lampos}),
the Extension Theorem repeatedly applies to extend $(y, x, m)$ to a unique full solution of system $P_{{\rm kite}}$
where $x, r_{12}, r_{13}, r_{23}$ are all positive and $r_{13} \neq r_{23}$.  As expected,
many of the elements in $G_{{\rm kite}}$ featuring the saturation variable for $r_{13} - r_{23}$
have a leading coefficient that is a multiple of the key cubic $9m^3+3m^2+7m+5$.
 
To see that $\lambda'$ is positive,
we compute a Gr\"obner basis for $P_{{\rm kite}}$ which eliminates
all variables except $\lambda'$ and $m$.  The first element in this Gr\"obner basis,
a quadratic polynomial in $\lambda'$ with coefficients in $m$, is
\begin{equation}
2 m^2 (m+1) \, \lambda'^{\, 2} + (4m - 1)(m + 1)^2 \, \lambda' + m(m + 2)(2m + 1).
\label{eq:lamprime-m}
\end{equation}
It is straight-forward to show that the roots of this quadratic are real and positive for
$m^\ast < m < -1/2$.  Consequently, $\lambda' > 0$ and the configuration corresponding to
the solution guaranteed by the Extension Theorem is convex.
Therefore, $y > 0$ is also assured, and by symmetry, $y$ must be the positive square root of
the remaining real root of $\zeta_m$.  Since we have saturated to eliminate the rhombus solutions,
we must have $x \neq y$.  Thus, there are two solutions: one where $x > \sqrt{z^\ast} > y > 0$, and one where
$y >  \sqrt{z^\ast} > x > 0$.  The full count of four solutions comes from reflecting each of these solutions about the
axis of symmetry (or simply interchanging vortices 3 and~4).  This proves the first part of the theorem.

To see that there are no other solutions (other than a rhombus) with $\lambda' > 0$, first consider the case
$m \in (-1/2,0)$.  According to equation~(\ref{eq:lamprime-m}), the two possible choices for $\lambda'$ are real, with opposite sign.
Denote the larger, positive root as $\lambda'_+$ and suppose there was a real solution to system 
$P_{{\rm kite}}$ with $\lambda' = \lambda'_+ > 0$. 
Since the quadratic~(\ref{eq:lamprime-m}) is strictly negative when $\lambda' = 3$ and $m \in (-1/2,0)$, we can assume that
$\lambda'_+ > 3$.  Next, the top two equations in~(\ref{eqDziobek1}) imply that 
\begin{equation}
4m^2 x y (1 + \lambda') \; = \;  1 + \lambda' r_{12}^2.
\label{eq:cc-contrad}
\end{equation}
If our real solution has $\lambda' > 0$, then it is convex and $xy > 0$.  Since $\lambda'_+ > 3$, we have that
$$
1 + \lambda'_+ (x^2 + y^2) + xy(\lambda'_+  - 1) \; > \;  0.
$$
Taking $m \in (-1/2,0)$, this in turn implies that
$$
4m^2 xy (1 + \lambda'_+) \; < \; xy(1 + \lambda'_+) \; < \; 1 + \lambda'_+ (x+y)^2 ,
$$
which violates equation~(\ref{eq:cc-contrad}).  Therefore, any real solution for $m \in (-1/2,0)$ must have
$\lambda' < 0$.

Recall that for $m = -1/2$, $\zeta_m$ reduces to a cubic polynomial with only one positive root at $\zeta = 3/4$.
Therefore, the only possible solution to system $P_{{\rm kite}}$ at $m = -1/2$ has $x = y = \sqrt{3}/2$, a rhombus solution.
At $m = m^\ast$, $\zeta_m$ has only one positive root at~$z^\ast$ (multiplicity two), which also implies that
$x = y$.  For the case $-1 < m < m^\ast$, the discriminant of $\zeta_m$ is positive; however, using Lemma~\ref{lemma:quartic-roots}
it is straight-forward to check that all the roots of $\zeta_m$ are complex.  Hence there are no real solutions
to system~$P_{{\rm kite}}$ when $-1 < m < m^\ast$.  

For $m < -1$, the discriminant of $\zeta_m$ is negative so there are two real roots.  To determine their sign,
we note that $\zeta_m(-1/4) = 128 m^3 (m+1) (2m+1)$ is strictly negative for $-2 < m < -1$.  Since both the leading coefficient
and the constant term of $\zeta_m$ are positive when $-2 < m < -1$, it follows that the real roots of $\zeta_m$ are negative
in this case, and there are no real solutions to system~$P_{{\rm kite}}$.  
For $m < -2$, the signs of the leading coefficient and constant term both flip to become negative.
Since $\zeta_m(3/4) = -128 (m-1) (2m+1)^2$ is strictly positive when $m < -2$, it follows that $\zeta_m$ has 
two positive real roots.  However, examining the roots of the quadratic in equation~(\ref{eq:lamprime-m}),
we see that $\lambda' < 0$ whenever $m < -2$.  Finally, when $m=-2$, $\zeta_m$ reduces to
a cubic with only one real root at zero.  Thus, we have shown that for $m < 0$ and $m \not \in (m^\ast, -1/2)$,
there are either no real solutions or only solutions with $\lambda' < 0$.  This completes the proof of
Theorem~\ref{thm:kite-lampos}.
\end{proof}

\begin{remark}

\begin{enumerate}

\item Since Theorem~\ref{thm:kite-lampos} applies for all $m < 0$, we do not need to consider the case
with vortices 3 and~4 on the axis of symmetry.  If there existed a solution for some $m \in (-1,0)$ having
$\lambda' > 0$ and vortices 3 and~4 on the axis of symmetry, we could rescale the vortex strengths by
$1/m$ and relabel the vortices to obtain a solution with $m < -1$,
$\lambda' > 0$, and vortices 1 and~2 on the axis of symmetry.  But this contradicts
the last statement of the theorem.

\item  For completeness, we note that the cases having real, positive solutions with $\lambda' < 0$ agree with
the results in Section~\ref{section:concave-kites}.  For example, when $-1/2 < m < 0$, we find
concave kite configurations with vortices 1 and~2 on the axis of symmetry, as predicted by
Theorem~\ref{thm:concavekites}.  For $m < -2$, we find convex kite solutions with vortices 1 and~2 on
the axis of symmetry.  Rescaling the vortex strengths by $1/m$ and relabeling the vortices gives a
family of convex kites with vortices 3 and~4 on the axis of symmetry and $-1/2 < m < 0$.  This also
concurs with Theorem~\ref{thm:concavekites}.

\end{enumerate}

\end{remark}

\subsection{The rhombus solutions} 
\label{rhombus_sec}

In this final section we study convex kite configurations that have two lines of symmetry, namely rhombus configuations. 
Here we have four congruent exterior sides, $r_{13} = r_{14} = r_{23} = r_{24}$ with the diagonals satisfying
the relation $r_{12}^2 + r_{34}^2 = 4 r_{13}^2$.  The areas satisfy $A_1 = A_2 = -A_3 = -A_4$.  
The first and last equations in~(\ref{eqDziobek2}) are clearly satisfied.  The middle four equations in~(\ref{eqDziobek2}) 
are equivalent.  They, along with the value of $\lambda'$ determined by~(\ref{eq:Dzio}), yield a formula for~$m$ given by
\begin{equation}
m \; = \;  \frac{x^2(3-x^2)}{3x^2 - 1},
\label{eq:RhomMass}
\end{equation}
where $x = r_{34}/r_{12}$ is the ratio between the diagonals of the rhombus.
Note that $m > 0$ if and only if $1/\sqrt{3} < x < \sqrt{3}$.  As usual, we restrict to the case
$-1 < m \leq 1$.


Unlike the isosceles trapezoid case, here we have solutions for $m < 0$.  In fact, there
are two geometrically distinct families of rhombi when $m < 0$.
This can be seen by inverting equation~(\ref{eq:RhomMass}), which yields
\begin{equation}
x^2 \; = \;  \frac{1}{2} \left( 3 - 3m \pm \sqrt{(3-3m)^2 + 4m} \, \right),
\label{eq:rhombus-fams}
\end{equation}
where $+$ is taken if $m > 0$ and $+$ or $-$ may be chosen if $m < 0$.   Using equation~(\ref{eq:lambda}), we compute that
\begin{equation}
\lambda \; = \;  \frac{4(m^2 + 4m + 1)}{r_{12}^2( 2 + 3m - 3m^2 \pm  m \sqrt{9m^2 - 14m + 9} \, )},
\label{eq:lam-rhombus}
\end{equation}
with the same sign choices as for $x^2$.  Note that the numerator of~(\ref{eq:lam-rhombus}) vanishes
at $m = -2 + \sqrt{3} \approx -0.2679$.  The denominator also vanishes at this special value, but only when $+$ is chosen.
For the rhombus solution when $+$ is taken in both (\ref{eq:rhombus-fams}) and~(\ref{eq:lam-rhombus}),
the value of the angular velocity $\lambda$ is always positive (thus $\lambda' < 0$) and monotonically increasing in $m$
for $m \in (-1,1]$.  In contrast, for the rhombus solution when 
$-$ is taken in both (\ref{eq:rhombus-fams}) and~(\ref{eq:lam-rhombus}) ($m < 0$ only), 
we have $\lambda > 0$ (thus $\lambda' < 0$) only for $-2+\sqrt{3} < m < 0$.  At the special value
$m = -2 + \sqrt{3}$, the rhombus relative equilibrium actually becomes an equilibrium
(see Section~\ref{symmetric_equilibria}).  As $m$ decreases through $-2 + \sqrt{3}$,
the direction of rotation flips, $\lambda$ becomes negative and $\lambda'$ becomes positive.
We summarize our conclusions in the following theorem.

\begin{theorem}
There exists two one-parameter families of rhombi relative equilibria with vortex strengths
$\Gamma_1 = \Gamma_2 = 1$ and $\Gamma_3 = \Gamma_4 = m$.  The vortices 1 and~2 lie on opposite
sides of each other, as do vortices 3 and~4.  Let $\beta = 3 - 3m$.
The mutual distances are given by
\begin{equation}
\left( \frac{r_{34}}{r_{12}} \right)^2 \; = \;  \frac{1}{2} \left( \beta \pm \sqrt{\beta^2 + 4m} \, \right) \; \;  \mbox{and} \; \;
\left( \frac{r_{13}}{r_{12}} \right)^2 \; = \;  \frac{1}{8} \left( \beta+2 \pm \sqrt{\beta^2 + 4m} \, \right),
\label{eq:rhom-thm-sols}
\end{equation}
describing two distinct solutions.  Taking $+$ in~(\ref{eq:rhom-thm-sols}) yields a solution
for $m \in (-1,1]$ that always has $\lambda > 0$.  Taking $-$ in~(\ref{eq:rhom-thm-sols}) yields a solution
for $m \in (-1,0)$ that has $\lambda > 0$ for $m \in (-2 + \sqrt{3}, 0)$, but
$\lambda < 0$ for $m \in (-1, -2 + \sqrt{3})$.  At $m = -2 + \sqrt{3}$, the $-$ solution
becomes an equilibrium.  The case $m=1$ reduces to the square.  For $m > 0$, 
the larger pair of vortices lie on the shorter diagonal.
\end{theorem}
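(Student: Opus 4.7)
The plan is to reduce the rhombus case to a single scalar equation, solve it as a quadratic in the diagonal ratio, and then track the sign of $\lambda$ across the two resulting branches. Most of the building blocks are already in place in the paragraphs immediately preceding the theorem statement, so the proof is primarily a matter of assembling and checking cases.

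First I would observe that the rhombus symmetries $r_{13}=r_{14}=r_{23}=r_{24}$ together with the signed-area identities $A_{1}=A_{2}=-A_{3}=-A_{4}$ make the first and last equations in~(\ref{eqDziobek2}) trivial, render the three Dziobek relations in~(\ref{eq:Dzio}) pairwise equivalent, and collapse the four remaining ratios in~(\ref{eqDziobek2}) into the single mass formula~(\ref{eq:RhomMass}). Writing this as the quadratic $u^{2}-(3-3m)u-m=0$ in $u=(r_{34}/r_{12})^{2}$ and inverting gives the two roots $u_{\pm}$ appearing in~(\ref{eq:rhombus-fams}); the formula for $(r_{13}/r_{12})^{2}$ in~(\ref{eq:rhom-thm-sols}) then follows from the rhombus identity $r_{12}^{2}+r_{34}^{2}=4r_{13}^{2}$.

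Next I would settle the ranges of $m$ on which each branch is realizable. The discriminant $\beta^{2}+4m=9m^{2}-14m+9$ has its own discriminant $14^{2}-4\cdot 81=-128<0$ and is therefore strictly positive, so both roots are real for all $m$. By Vieta's formulas $u_{+}+u_{-}=3-3m>0$ on $(-1,1]$ and $u_{+}u_{-}=-m$, so $u_{+}>0$ throughout $(-1,1]$ while $u_{-}>0$ if and only if $m<0$. The case $m=1$ gives $u_{+}=1$, the square.

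Finally, I would treat the sign of $\lambda$. Plugging $I=\frac{1}{2\Gamma}\sum_{i<j}\Gamma_{i}\Gamma_{j}r_{ij}^{2}$ and the expressions for $u_{\pm}$ into~(\ref{eq:lambda}) produces~(\ref{eq:lam-rhombus}). The numerator $4(m^{2}+4m+1)$ vanishes on $(-1,1]$ only at $m_{e}=-2+\sqrt{3}$. A direct check (using $\sqrt{9m_{e}^{2}-14m_{e}+9}=5(\sqrt{3}-1)$) shows that at $m_{e}$ the denominator of~(\ref{eq:lam-rhombus}) also vanishes on the $+$ branch but is nonzero on the $-$ branch; rationalizing the denominator on the $+$ branch cancels the common factor $(m^{2}+4m+1)$ and leaves a strictly positive function of $m$, so $\lambda>0$ throughout the $+$ family. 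On the $-$ branch $\lambda(m_{e})=0$ recovers the equilibrium of Section~\ref{symmetric_equilibria}, and the signs $\lambda>0$ on $(-2+\sqrt{3},0)$ and $\lambda<0$ on $(-1,-2+\sqrt{3})$ then follow from one convenient test value in each subinterval together with continuity. For the last claim about $m>0$, one computes $m-1=-(u_{+}-1)(u_{+}+1)/(3u_{+}-1)$ from~(\ref{eq:RhomMass}); since $3u_{+}-1>0$ whenever $m>0$, this gives $m>1\Longleftrightarrow r_{34}<r_{12}$ and $0<m<1\Longleftrightarrow r_{12}<r_{34}$, so in every case the larger-strength vortices sit on the shorter diagonal. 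The main technical obstacle is the $0/0$ at $m_{e}$ on the $+$ branch, which I would handle by explicit rationalization rather than by a naive limit.
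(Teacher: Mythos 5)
Your proposal is correct and follows essentially the same route as the paper: the mass formula~(\ref{eq:RhomMass}) from the Dziobek equations under rhombus symmetry, its inversion as a quadratic in $(r_{34}/r_{12})^2$ giving the two branches~(\ref{eq:rhombus-fams}), and the sign analysis of $\lambda$ via~(\ref{eq:lam-rhombus}), with the final diagonal comparison reducing to the sign of $m-1$ exactly as in the paper's proof. Your Vieta's-formula argument for root positivity and the explicit rationalization resolving the $0/0$ at $m = -2+\sqrt{3}$ on the $+$ branch are welcome elaborations of claims the paper merely asserts (checking only that your rationalized expression acquires a new removable $0/0$ at $m=1$, where positivity of $\lambda$ is immediate since all vorticities are positive there), but they do not change the underlying approach.
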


\begin{proof}
The formula for $r_{13}/r_{12}$ comes from $1+x^2 = 4(r_{13}/r_{12})^2$.  For the case $m > 0$, it is easy to show
from equation~(\ref{eq:RhomMass}) that $m < 1$ if and only if $1 < x < \sqrt{3}$.  This verifies the last statement in the
theorem.
\end{proof}

 \begin{figure}[tb]
 \centering
 \includegraphics[width=7.5 cm,keepaspectratio=true]{./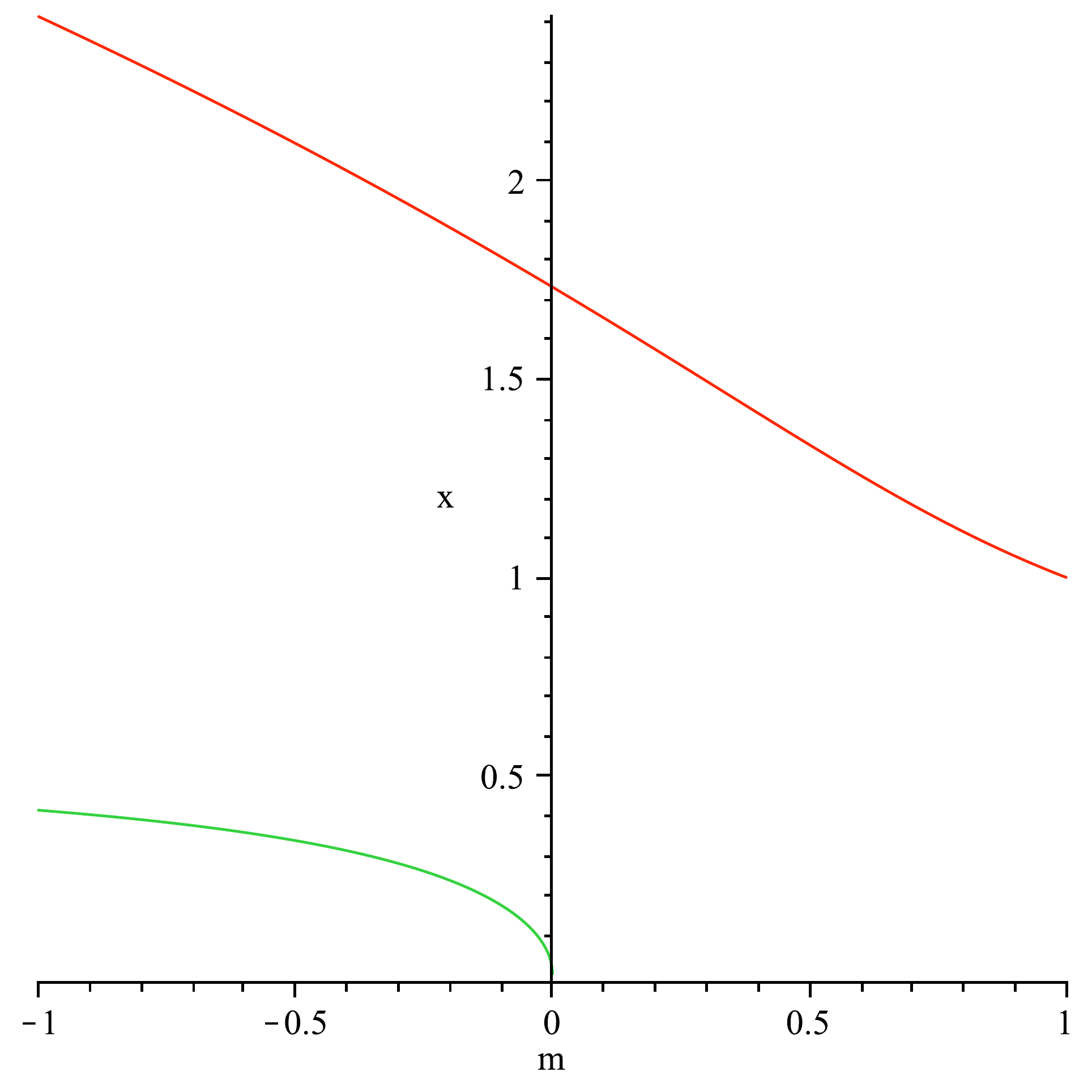}
 \caption{A plot of $x$ versus $m$ for the two rhombus families, where $x = r_{34}/r_{12}$ is the ratio between the diagonals of the rhombus.
 \label{fig:rhombus-xm} }
\end{figure}

\begin{remark}

\begin{enumerate}

\item   One way to visualize the two rhombus solutions is captured in Figure~\ref{fig:rhombus-xm}, where a plot
of $x = r_{34}/r_{12}$ versus $m$ is shown.  Beginning with the square at $m=1$, as $m$ decreases,
the ratio of the diagonals of the rhombus increases from 1 to $\sqrt{3}$, with the larger pair of
vortices lying on the shorter diagonal.  At $m=0$ a bifurcation occurs, and a new family is born emerging
out of a binary collision between vortices 3 and~4.  Unlike the previous family, which continues past the bifurcation,
this family has the larger pair of vortices (in absolute value) on the longest diagonal (see Figure~\ref{fig:rhombi_m=-0.3}).  
As $m \rightarrow -1^+$, $x \rightarrow \sqrt{2} - 1$ for the new family while $x \rightarrow \sqrt{2} + 1$ for the previous family.

\item   The rhombus solution when $-$ is chosen in equation~(\ref{eq:rhom-thm-sols}) undergoes a pitchfork bifurcation at
the special parameter value $m^\ast \approx -0.5951$.  As $m$ increases through $m^\ast$, the rhombus solution bifurcates into
two convex kite solutions with the positive strength vortices on the axis of symmetry.  The two kites are distinguished by
whether $r_{13} > r_{23}$ or $r_{13} < r_{23}$.  Since the rhombus solution continues to exist past the bifurcation, we have a
pitchfork bifurcation.  The Hessian matrix $D^2(H + \lambda I)$ evaluated at the $-$ rhombus solution
at $m = m^\ast$ has a null space of dimension 1 (excluding the ``trivial'' eigenvector in the direction of rotation)
and contains an eigenvector corresponding to a perturbation in the direction of the convex kite solution
found in Section~\ref{sec:kites-lampos}.

\item  The fact that there are two geometrically distinct rhombus solutions for $m < 0$ (plus two convex kites when
$m^\ast < m < -1/2$) indicates that there is {\em not} a unique (up to symmetry) convex central configuration
for these four choices of vorticities.  This contrasts with the Newtonian four-body problem where it is thought
(although unproven) that there is a unique convex central configuration for any ordering of four positive masses.

\end{enumerate}

\end{remark}

 \begin{figure}[tb]
 \centering
 \includegraphics[height=6.5 cm,keepaspectratio=true]{./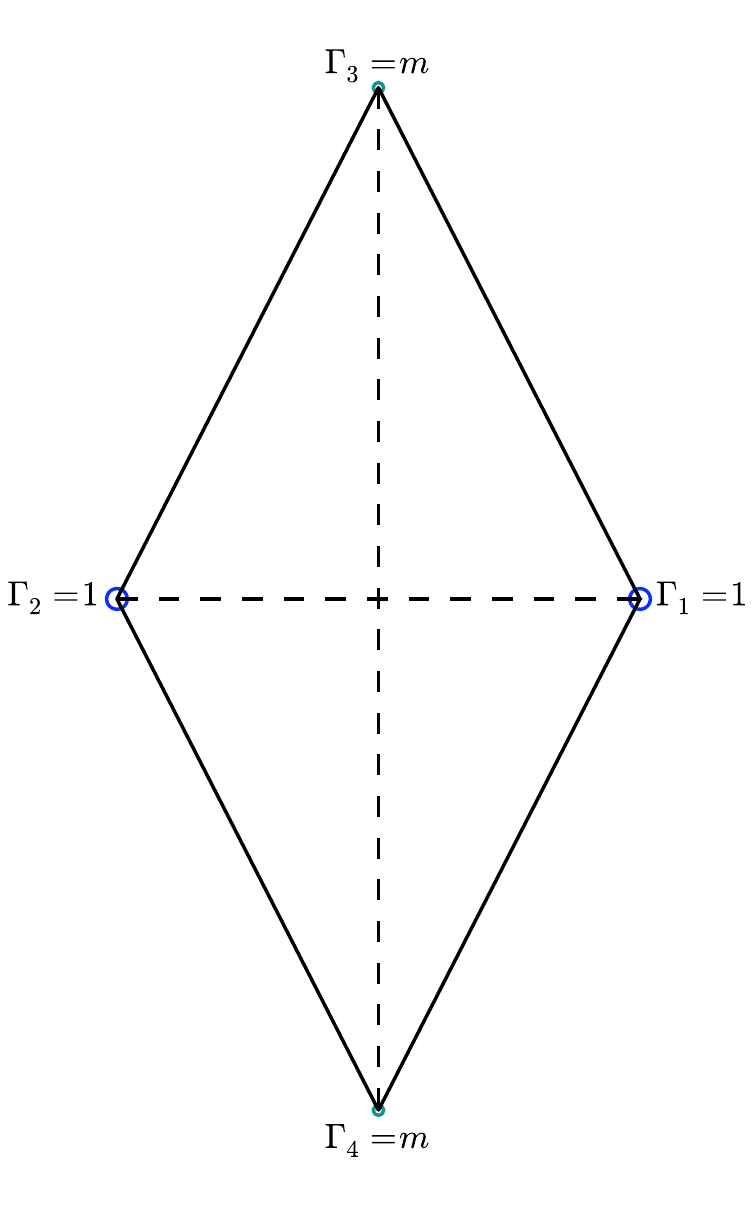}
 \hspace{0.4in}
 \includegraphics[width=6.5 cm,keepaspectratio=true]{./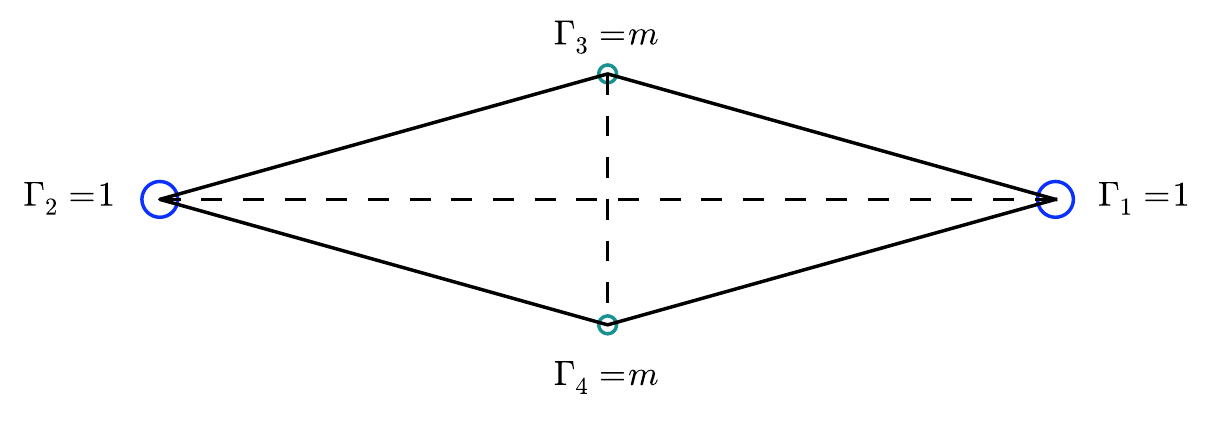}
 \caption{The two distinct rhombi relative equilibria when $m = -0.3$.  The solutions rotate in opposite
 directions.}
  \label{fig:rhombi_m=-0.3} 
\end{figure}


\vs

\nin {\bf Acknowledgements:}
Part of this work was carried out when the authors were visiting the American Institute of Mathematics
in May of 2011.  We gratefully acknowledge their hospitality and support.  GR was also supported by a grant
from the National Science Foundation and MS was supported by a NSERC Discovery Grant.

\pagebreak

\bibliography{Papers,Books,My_Papers}{}

\providecommand{\bysame}{\leavevmode\hbox to3em{\hrulefill}\thinspace}
\providecommand{\MR}{\relax\ifhmode\unskip\space\fi MR }
\providecommand{\MRhref}[2]{%
  \href{http://www.ams.org/mathscinet-getitem?mr=#1}{#2}
}
\providecommand{\href}[2]{#2}
\begin{thebibliography}{10}

\bibitem{albouy_symmetric_1996}
Alain Albouy, \emph{The symmetric central configurations of four equal masses},
  Hamiltonian dynamics and celestial mechanics {(Seattle}, {WA}, 1995),
  Contemp. Math., vol. 198, Amer. Math. Soc., Providence, {RI}, 1996,
  pp.~131--135.

\bibitem{albouy_probleme_1997}
Alain Albouy and Alain Chenciner, \emph{Le problème des n corps et les
  distances mutuelles}, Inventiones Mathematicae \textbf{131} (1997), no.~1,
  151--184.

\bibitem{albouy_symmetry_2008}
Alain Albouy, Yanning Fu, and Shanzhong Sun, \emph{Symmetry of planar four-body
  convex central configurations}, Proceedings of the Royal Society A:
  Mathematical, Physical and Engineering Sciences \textbf{464} (2008),
  no.~2093, 1355--1365.

\bibitem{aref_point-vortex_2007}
Hassan Aref, \emph{Point vortex dynamics: a classical mathematics playground},
  Journal of Mathematical Physics \textbf{48} (2007), no.~6, 065401, 23 pp.

\bibitem{Barros_set_2011}
Jean~F. Barros and Eduardo S.~G. Leandro, \emph{The set of degenerate central
  configurations in the planar restricted four-body problem}, {SIAM} Journal on
  Mathematical Analysis \textbf{43} (2011), no.~2, 634.

\bibitem{bochnak_real_1998}
Jacek Bochnak, Michel Coste, and Marie-Fran\c{c}oise Roy, \emph{Real algebraic
  geometry}, Springer, September 1998.

\bibitem{cardano_1545}
Gerolamo Cardano, \emph{Artis magnae}, Johann Petreius, Nuremberg, 1545.

\bibitem{celli_2005}
Martin Celli, \emph{Sur les mouvements homographiques de $n$ corps associ\'es
  \`a des masses de signe quelconque, le cas particulier o\`u la somme des
  masses est nulle, et une application \`a la recherche de chor\'eographies
  perverse}, Ph.D. thesis, Universit\'e Paris 7, France, 2005.

\bibitem{chen_triangular_2010}
Changbo Chen, James~H. Davenport, John~P. May, Marc~Moreno Maza, Bican Xia, and
  Rong Xiao, \emph{Triangular decomposition of semi-algebraic systems},
  Proceedings of the 2010 International Symposium on Symbolic and Algebraic
  Computation, {ISSAC} '10, {ACM} Press, 2010, pp.~187--194.

\bibitem{cors_co-circular_2012}
Josep~M. Cors and Gareth~E. Roberts, \emph{Four-body co-circular central
  configurations}, Nonlinearity \textbf{25} (2012), no.~2, 343--370.

\bibitem{cox_ideals_2007}
David~A. Cox, John~B. Little, and Donal {O'Shea}, \emph{Ideals, varieties, and
  algorithms: {An} introduction to computational algebraic geometry and
  commutative algebra}, Springer, 2007.

\bibitem{decker_singular_2011}
Wolfram Decker, Gert-Martin Greuel, Gerhard Pfister, and Hans Schönemann,
  \emph{Singular 3-1-3 — a computer algebra system for polynomial
  computations},  (2011), http://www.singular.uni-kl.de.

\bibitem{dziobek_ueber_1900}
Otto Dziobek, \emph{Über einen merkwürdigen fall des vielkörperproblems},
  Astronomische Nachrichten \textbf{152} (1900), 33.

\bibitem{hampton_2002}
Marshall Hampton, \emph{Concave central configurations in the four-body
  problem}, Ph.D. thesis, University of Washington, Seattle, 2002.

\bibitem{hampton_finiteness_2005}
Marshall Hampton and Richard Moeckel, \emph{Finiteness of relative equilibria
  of the four-body problem}, Inventiones Mathematicae \textbf{163} (2005),
  no.~2, 289--312.

\bibitem{hampton_finiteness_2009}
\bysame, \emph{Finiteness of stationary configurations of the four-vortex
  problem}, Transactions of the American Mathematical Society \textbf{361}
  (2009), no.~3, 1317–1332.

\bibitem{kirchhoff_vorlesungen_1883}
Gustav Kirchhoff, \emph{Vorlesungen über mathematische physik}, {B.G.}
  Teubner, 1883.

\bibitem{meyer_bifurcations_1988}
Kenneth Meyer and Dieter Schmidt, \emph{Bifurcations of relative equilibria in
  the n-body and {Kirchhoff} problems}, SIAM Journal on Mathematical Analysis
  \textbf{19} (1988), no.~6, 1295--1313.

\bibitem{meyer_hall_offin_2009}
Kenneth~R. Meyer, Glen~R. Hall, and Dan Offin, \emph{Introduction to
  {Hamiltonian} dynamical systems and the n-body problem}, second ed.,
  Springer, 2009.

\bibitem{moulton_straight_1910}
Forest~Ray Moulton, \emph{The straight line solutions of the problem of n
  bodies}, The Annals of Mathematics \textbf{12} (1910), no.~1, 1--17.

\bibitem{oneil_stationary_1987}
Kevin~Anthony {O'Neil}, \emph{Stationary configurations of point vortices},
  Transactions of the American Mathematical Society \textbf{302} (1987), no.~2,
  383–425.

\bibitem{palmore_relative_1982}
Julian Palmore, \emph{Relative equilibria of vortices in two dimensions},
  Proceedings of the National Academy of Sciences of the U.S.A. \textbf{79}
  (1982), no.~2, 716--718.

\bibitem{perez-chavela_convex_2007}
Ernesto {Perez-Chavela} and Manuele Santoprete, \emph{Convex four-body central
  configurations with some equal masses}, Archive for Rational Mechanics and
  Analysis \textbf{185} (2007), no.~3, 481--494.

\bibitem{schmidt_central_2002}
Dieter Schmidt, \emph{Central configurations and relative equilibria for the
  n-body problem}, Classical and celestial mechanics {(Recife}, 1993/1999),
  Princeton Univ. Press, Princeton, {NJ}, 2002, p.~1–33.

\bibitem{spang_zero-dimensional_2008}
Silke Spang, \emph{A zero-dimensional approach to compute real radicals}, The
  Computer Science Journal of Moldava \textbf{16} (2008), no.~1, 64--92.

\bibitem{spang_tt_2011}
\bysame, \emph{{\tt realrad.lib}. {A} {\sc singular} {3-1-3} library for
  computing real radicals}, 2011.

\bibitem{stein_sage_2011}
William~A. Stein et~al., \emph{Sage mathematics software {(Version} 4.6.2)},
  2011, http://www.sagemath.org.

\end{thebibliography}
\bibliographystyle{amsplain}

\end{document}